\documentclass[final,leqno,onefignum,onetabnum]{siamltex1213}

\usepackage{amsmath}
\usepackage{amsfonts}
\usepackage{amssymb}
\usepackage{latexsym}
\usepackage{multirow}

\newcommand{\stinvF}{{C}_{\rm inv}^1}

\newcommand{\stinvb}{{C}_{\rm inv}^2}


\usepackage[usenames,dvipsnames,svgnames,table]{xcolor}

\usepackage{dsfont}

\newcommand{\be}{\begin{equation}}
\newcommand{\ee}{\end{equation}}
\newcommand{\bea}{\begin{eqnarray}}
\newcommand{\eea}{\end{eqnarray}}


\newcommand{\nno}{\nonumber}

\newcommand{\vecL}{\bold{\Pi}_2}
\newcommand{\mbf}[1]{\mbox{\boldmath$\rm{#1}$}}

\newcommand{\el}{ \kappa \in \mathcal{T}}

\newcommand{\diam}{\operatorname{diam}}

\newcommand{\stfes}{S^{\bf p}(\mathcal{U} ; \mathcal{T})}

\newcommand{\timesub}{\mathcal{U}_n(\mathcal{T})}

\newcommand{\stnfessub}[1]{V^{\bf p}(I_{#1};\mathcal{T}; \timesub)}

\newcommand{\stp}{\lambda_{\kappa_n}}

\newcommand{\tmesh}{\tau_{n}}

\newcommand{\tsubmesh}[1]{\tau_{n,#1}}

\newcommand{\stF}{\mathcal{F}}

\newcommand{\mean}[1]{\{\!\!\{#1\}\!\!\}}                
\newcommand{\jump}[1]{[\![#1]\!]}                        
\newcommand{\ujump}[1]{\lfloor #1\rfloor}                

\renewcommand{\k}{\kappa}

\newcommand{\su}{\sum_{\el}}
\newcommand{\ud}{\,\mathrm{d}}
\newcommand{\ndg}[1]{| \kern -.25mm \|{#1}| \kern -.25mm \|}
\newcommand{\ncdg}[1]{| \kern -.25mm \|{#1}| \kern -.25mm \|_{\rm DG}}
\newcommand{\nsdg}[1]{| \kern -.25mm \|{#1}| \kern -.25mm \|_{\rm s}}
\newcommand{\nstdg}[1]{| \kern -.25mm \|{#1}| \kern -.25mm \|_{L_2 (J; \mathcal{D} )}}
\newcommand{\ltwo}[2]{\|{#1}\|_{{#2}}}
\newcommand{\linf}[2]{\|{#1}\|_{L_{\infty}({#2})}}



\newcommand{\no}{[\kern -.8mm [}
\newcommand{\nc}{]\kern -.8mm ]}

\newcommand{\stelem}{ \mathcal{U} \times \mathcal{T} }

\newcommand{\stelemsub}{ \timesub \times \mathcal{T}}

\newcommand{\norm}[2]{\| {#1} \|_{#2}}

\newcommand*\samethanks[1][\value{footnote}]{\footnotemark[#1]}

\newtheorem{remark}[theorem]{Remark}
\newtheorem{assumption}[theorem]{Assumption}


\title{$\boldsymbol{\lowercase{hp}}$-Version space-time discontinuous Galerkin methods for parabolic problems on prismatic meshes \\ (Corrected version 01/11/2024)}

\author{ANDREA CANGIANI\thanks{Department of Mathematics, University of Leicester,   Leicester LE1 7RH, United Kingdom (\email{Andrea.Cangiani@le.ac.uk},
\email{zd14@le.ac.uk}).}
\and ZHAONAN DONG\samethanks
\and Emmanuil H. Georgoulis\thanks{Department of Mathematics, University of Leicester,   Leicester LE1 7RH, United Kingdom (\email{Emmanuil.Georgoulis@le.ac.uk}) \& Department of Mathematics, School of Applied Mathematical and Physical Sciences, National Technical University of Athens, Zografou 15780, Greece.}
}

\begin{document}
\maketitle
\slugger{sisc}{xxxx}{xx}{x}{x--x}

{\small \bf **This is a modified/corrected version of the published paper [SISC 39(4) pp.A1251 --A1279 (2017)]. In particular, the proof of Theorem 5.11 in the published paper contains two inaccuracies. These are now corrected in the present version. A new result, Theorem 5.12 treating the case of time-dependent diffusion coefficient has also been added for completeness. The modified text starts right after Assumption 5.10. and concludes in the last paragraph of Section 5.**}

\begin{abstract}
We present a new $hp$-version space-time discontinuous Galerkin (dG) finite element method for the numerical approximation of parabolic evolution equations on general spatial meshes consisting of polygonal/polyhedral (polytopic) elements, giving rise to prismatic space-time elements. A key feature of the proposed method is the use of space-time elemental polynomial bases of \emph{total} degree, say $p$,
defined in the physical coordinate system, as opposed to standard dG-time-stepping methods whereby spatial elemental bases are tensorized with temporal basis functions. This approach leads to a fully discrete $hp$-dG scheme using fewer degrees of freedom for each time step, compared to dG time-stepping schemes employing tensorized space-time basis, with acceptable deterioration of the approximation properties. A second key feature of the new space-time dG method is the incorporation of very general spatial meshes consisting of possibly polygonal/polyhedral elements with \emph{arbitrary} number of faces. A priori error bounds are shown for the proposed  method in various norms. An extensive comparison among the new space-time dG method, the (standard) tensorized space-time dG methods, the classical dG-time-stepping, and conforming finite element method in space, is presented in a series of numerical experiments.
\end{abstract}

\begin{keywords}
space-time discontinuous Galerkin; $hp$--finite element methods; reduced cardinality basis functions; discontinuous Galerkin time-stepping.
\end{keywords}

\begin{AMS}
65N30, 65M60, 65J10
\end{AMS}

\pagestyle{myheadings}
\thispagestyle{plain}
\markboth{A.~CANGIANI, Z.~DONG, E.H.~GEORGOULIS}{\sc $hp$-space-time dg methods for parabolic problems on prismatic meshes}

\section{Introduction}
The discontinuous Galerkin (dG) method can be traced back to \cite{reedhill}, where it was introduced as a nonstandard finite element scheme for solving the neutron transport equation. This dG method was analyzed in \cite{MR58:31918}, where it was also applied as a time stepping scheme for initial value problem for ordinary differential equations, and was shown to be equivalent to certain implicit Runge-Kutta methods. Jamet \cite{jamet1978galerkin} introduced a dG time-stepping scheme for parabolic problems on evolving domains, later extended and analysed in  \cite{eriksson1985time,eriksson1991adaptive,eriksson1995adaptive, eriksson1995adaptiveNonlinear,eriksson1998adaptive}. For an introduction, we refer to the classic monograph \cite{thomee1984galerkin} and the references therein. In \cite{makridakis1997stability}, the quasioptimality of
the dG time-stepping method for parabolic problems in mesh-dependent norms is established. Also, dG time-stepping convergence analyses under minimal regularity were shown in \cite{MR2139223,MR2217386,MR2684359}. In all aforementioned literature, convergence of the discrete solution to the exact solution is achieved by reducing spatial mesh size $h$ and time step size $\tau$  at some fixed (typically low) order.

On the other hand, the $p$- and $hp$-version finite element method (FEM) appeared in the 1980s (see \cite{babuvska1987optimal,Babuska-Suri:RAIRO:1987}, and also the textbook \cite{schwab} for a extensive survey). $p$- and $hp$-version FEM can achieve exponential rates of convergence when the underlying solution is locally analytic by increasing the polynomial order $p$ and/or locally grading the meshsize towards corner or edge singularities.  In this vein, the analyticity in the time-variable in parabolic problems has given rise to the use of $p$- and $hp$-version FEM for time-stepping \cite{babuska1989h, babuska1990h}, followed by \cite{schotzau2000time}, where $hp$-version dG time-stepping in conjunction with FEM in space was shown to converge exponentially.

Space-time $hp$-version dG methods have also been popular during the last 15 years \cite{sudirham2006space,van2008space,MR2838303}, typically, employing space-time slabs with possibly anisotropic tensor-product space-time elemental polynomial basis. More recently, space-time hybridizable-dG methods have been developed for flow equations \cite{rhebergen2012space,rhebergen2013space} and for Hamilton-Jacobi-Bellman equations \cite{smears2014discontinuous}.

The aim of this work is to present a new $hp$-version space-time dG method for the numerical approximation of parabolic evolution equations. A key attribute of the new method is the use of space-time elemental polynomial bases of \emph{total} degree, say $p$,
defined in the \emph{physical} coordinate system, as opposed to standard dG-time stepping methods whereby spatial elemental bases (conforming or non-conforming) are tensorized with temporal basis functions and are mapped from a reference element. This approach leads to a fully discrete $hp$--dG scheme which uses fewer degrees of freedom for each time step, compared to dG-time stepping schemes employing tensorized space-time bases. On the other hand, the use of total degree space-time bases leads to  half an order loss in mesh size of the expected rate of convergence in $L_2(L_2)$--norm and in $L_{\infty}(L_2)$--norm. Nonetheless, the method is shown to converge optimally in the broken $L_{2}(H^1)$--norm, with the error dominated asymptotically by the spatial convergence rate. The marginal deterioration in the convergence properties, compared to the standard space-time tensorized basis paradigm, turns out to be an acceptable trade-off given the substantial reduction in the local elemental basis cardinality. For earlier use of linear space-time basis functions for large flow computations, we refer to \cite{van2002space1,van2002space2}.

A second key attribute of the proposed method, stemming from the use of \emph{physical} frame basis functions, is its immediate applicability to extremely general spatial meshes consisting of \emph{polytopic} elements (polygonal/polyhedral elements in two/three space dimensions), giving rise to \emph{prismatic space-time elements}. Finite element methods with general-shaped elements have enjoyed a strong recent interest in the literature, aiming to reduce the computational cost of standard approaches based on simplicial or box-type elements, see, e.g., \cite{DPE,MR3259024,di2015hybrid,DiPietroErn,dg_cfes_2012,BassiJCP2012,cangiani2013hp,MR3033077, MR3008290} for dG schemes, \cite{composite,conforming_FEM_poly,mimetic,VEM} for conforming schemes, and the references therein.

Here, we prove the unconditional stability of the new space-time dG method, via the proof of an inf-sup condition for space-time elements with arbitrary aspect ratio between the time-step $\tau$ and the local spatial mesh-size $h$; this is an extension of the respective result from \cite[Lemma 5.1]{cangiani2015hp}, where global shape-regularity was required. As in \cite{cangiani2013hp,cangiani2015hp}, the analysis allows for arbitrarily small/degenerate $(d-k)$-dimensional element facets, $k=1,\ldots,d-1$, with $d$ denoting the spatial dimension.
However, by considering different mesh assumptions compared to \cite{cangiani2013hp,cangiani2015hp}, the proposed method is proved to be stable also, independently of the number of $(d-1)$-dimensional faces per element. (Note that the elemental basis is {\em independent} of the element's geometry, and in particular of the number of faces.)
To the best of our knowledge, this is the first result in the literature whereby polytopic meshes  with arbitrary number of faces are allowed. This setting gives great flexibility in resolving complicated geometrical features without resorting to locally overly-refined meshes, and in designing multi-level solvers \cite{dg_cfes_2012,BassiJCP2012}. For instance, this result can be viewed as the theoretical justification for the numerical experiments in  \cite{antonietti2015review,antonietti2014multigrid}.

 Furthermore, under a \emph{space-time shape-regularity} assumption, $hp$-a priori error bounds are proven in the broken $L_2(H^1)$-- and  $L_{2}(L_2)$--norms, combining the classical duality approach with careful use of approximation arguments to circumvent the fundamental impossibility to apply `tensor-product' arguments (as is standard in this context \cite{thomee1984galerkin}) in the present setting. Instead, a new argument, based on judicious use of the space-time local degrees of freedom, eventually delivers the $L_2(H^1)$--norm and $L_{2}(L_2)$--norm error bound, with constants independent of number of faces per element.

 The remainder of this work is structured as follows.
In Section~\ref{mod}, we introduce the model problem and define the set of admissible subdivisions of the space-time computational domain while the new space-time dG method is formulated in
Section~\ref{DGFEM_ell_sec}.  In Section~\ref{Stability}, we prove an inf-sup  condition  for the dG scheme.  Section~\ref{Analysis} is devoted to the a priori error analysis. The practical performance of the new space-time dG method is studied through a series of numerical examples in Section \ref{numerics}, where extensive comparison among different combinations of the spatial and temporal discretizations and the new approach are given.


\section{Problem and method}\label{mod}
For a Lipschitz domain $\omega \subset {\mathbb R}^d$, $d=2,3$,
we denote by $H^s(\omega)$ the Hilbertian Sobolev space of  index $s\ge 0$ of real--valued functions defined on
$\omega$, with seminorm $|\cdot |_{H^s(\omega)}$ and norm $\|\cdot\|_{H^s(\omega)}$. For $s=0$, we have $H^0(\omega)\equiv L_2(\omega)$ with inner product $(\cdot, \cdot)_\omega$ and induced norm $\|\cdot\|_{\omega}$; when $\omega=\Omega$, the problem domain, we shall drop the subscript and write $(\cdot, \cdot)$ and  $\|\cdot\|$, respectively, for brevity.  We also let~$L_p(\omega)$, $p\in[1,\infty]$,
denote the standard Lebesgue space on $\omega$, equipped with the norm~$\|\cdot\|_{L_p(\omega)}$. Further, with $|\omega|$ we shall denote the $d$-dimensional Hausdorff measure of $\omega$. Standard Bochner spaces of functions which map a (time) interval $I$ to a Banach space $X$ will also be employed. $L_2(I;X)$ and  $H^s(I;X)$ are the corresponding Lebesgue and Sobolev spaces, while $C(\bar{I};X)$ denotes the space of continuous functions. 


\subsection{Model problem}\label{Parabolic problem}
Let $\Omega$ be a bounded open polyhedral domain in $\mathbb{R}^d$, $d=2,3$, and let $J:= (0,T)$ a time interval with $T>0$. We consider the linear parabolic problem:
\begin{equation}\label{Problem}
\begin{aligned} \partial_t u  - \nabla \cdot ({\bf a} \nabla u) &=f  \quad \text{in } J \times  \Omega , \\
u|_{t=0} = u_0 \quad \text{on }  \Omega, \quad &\text{and} \quad u = g_{\rm D}  \quad \text{on }  J \times \partial\Omega,
\end{aligned}
\end{equation}
for $f\in L_2(J ; L_2(\Omega))$ and ${\bf a}\in L_\infty(J \times \Omega)^{d\times d}$, symmetric with
\begin{equation}\label{uniform ellipticity}
 \xi^\top {\bf a}(t,x) \xi \geq \theta |\xi|^2 >0 \quad  \forall \xi \in  \mathbb{R}^d, \quad \text{a.e.} \quad  (t,x)\in  J \times \Omega,
\end{equation}
for some constant $\theta>0$. Note that the differential operator $\nabla := (\partial_1,\partial_2,\cdots,\partial_d)$, i.e., is applied to the spatial variables only. For $u_0\in L_2(\Omega)$ and $g_{\rm D}=0$ the problem \eqref{Problem} is well-posed and there exists a unique solution $u \in L_2(J;H^1_0(\Omega))$ with  $u \in C(\bar{J};L_2(\Omega))$ and $\partial_t u \in L_2(J; H^{-1}(\Omega))$ \cite{ladyzhenska1988linear,lions2012non}.


\subsection{Finite element spaces}\label{FEM space}
Let $\mathcal{U}$ be a partition of the time interval $J$ into $N_t$ time steps $\{I_n \}_{n=1}^{N_t}$, with  $I_n = (t_{n-1},t_n)$  with respective set of nodes $\{t_n\}_{n=0}^{N_t}$ defined so that  $0:= t_0 <t_1<\dots <t_{N_t}:=T$. Set also set  $\tmesh:=t_{n} - t_{n-1}$, the length of $I_n$.

For the spatial mesh, we shall adopt the setting from \cite{cangiani2013hp,cangiani2015hp} (albeit with different assumptions on admissible meshes as we shall see below), with $\mathcal{T}$ being a
subdivision of spatial domain $\Omega$ into
disjoint open polygonal $(d=2)$ or polyhedral $(d=3)$ elements $\kappa$ such that $\bar{\Omega}=\cup_{\el}\bar{\kappa}$.
In the absence of hanging nodes/edges, we define the {\em interfaces} of the mesh $\mathcal{T}$ to be the set of
$(d-1)$-dimensional facets of the elements $\kappa\in \mathcal{T}$. To facilitate the presence of hanging nodes/edges, which are permitted in $\mathcal{T}$,
the interfaces of $\mathcal{T}$ are defined to be the intersection of the $(d-1)$-dimensional facets of neighbouring elements.
Hence, for $d=2$, the interfaces of a given element $\k\in \mathcal{T}$ will consist of line segments (one-dimensional simplices), while for $d=3$, we assume that each interface of an element $\k\in \mathcal{T}$ may be subdivided into a set of co-planar triangles (two-dimensional simplices). We shall, therefore, use the terminology `face' to refer to a $(d-1)$-dimensional simplex which forms part of the interface of an element $\k\in \mathcal{T}$. For $d=2$,  the face and interface of an element $\k\in \mathcal{T}$ necessarily coincide. We also assume that, for $d=3$, a sub-triangulation of each interface into faces is given.  We shall denoted by $\mathcal{E}$ the union of all open mesh faces, i.e., $\mathcal{E}$ consists of $(d-1)$-dimensional simplices.  Further, we write $\mathcal{E}_\text{int}$ and $\mathcal{E}_\text{D}$ to denote the union of all open $(d-1)$-dimensional element faces $F\in \mathcal{E}$ that are contained in $\Omega$ and in $\partial \Omega$, respectively. Let also $\Gamma_\text{int}:=\{x\in \Omega:x \in F , F\in \mathcal{E}_\text{int} \}$ and $\Gamma_\text{D}:=\{x\in \Omega:x \in F , F\in \mathcal{E}_\text{D} \}$, while $\Gamma := \Gamma_{\rm D}\cup \Gamma_\text{int}$.

 \begin{figure}[t]
\begin{center}
\includegraphics[scale=0.26]{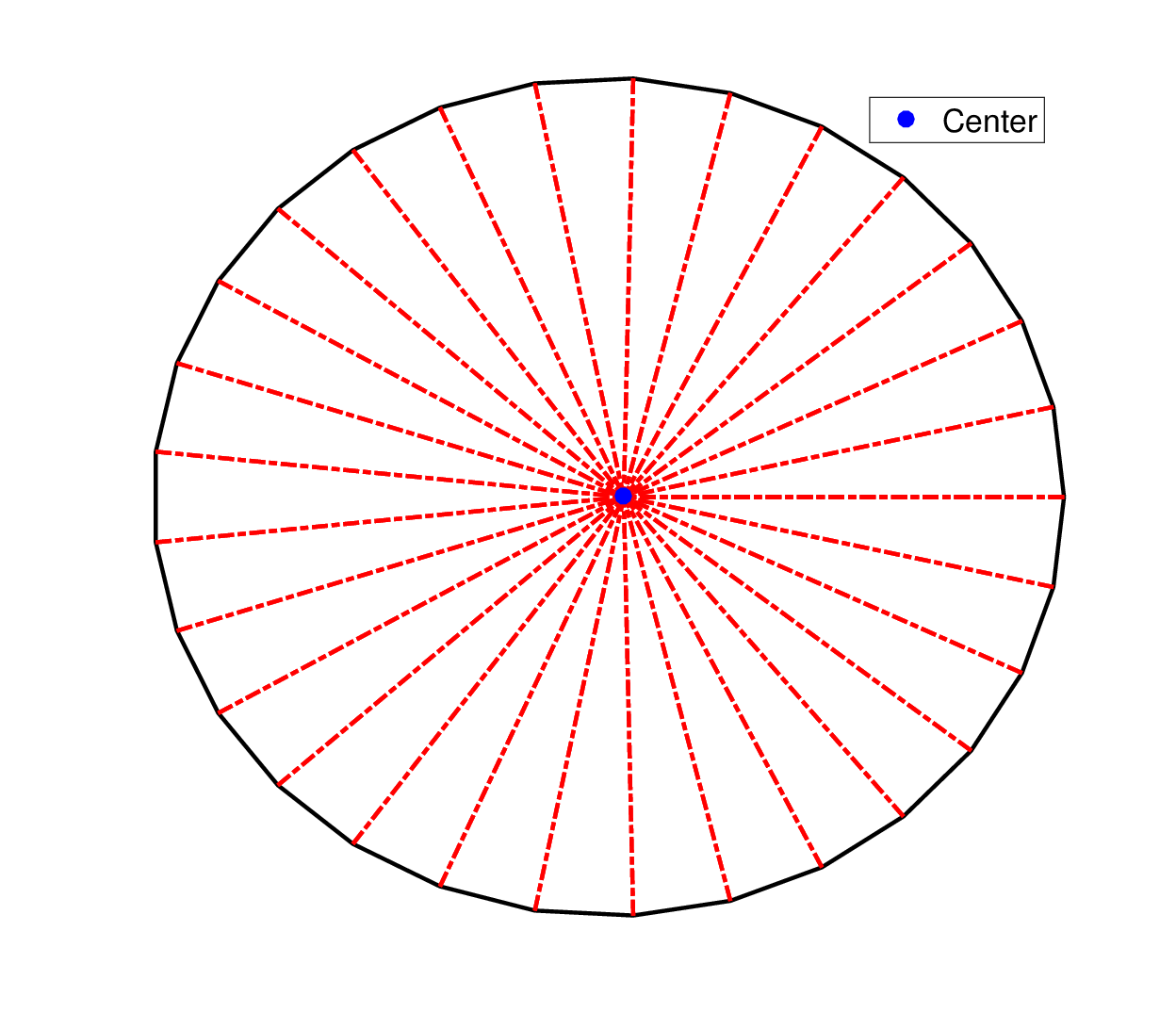}
\includegraphics[scale=0.26]{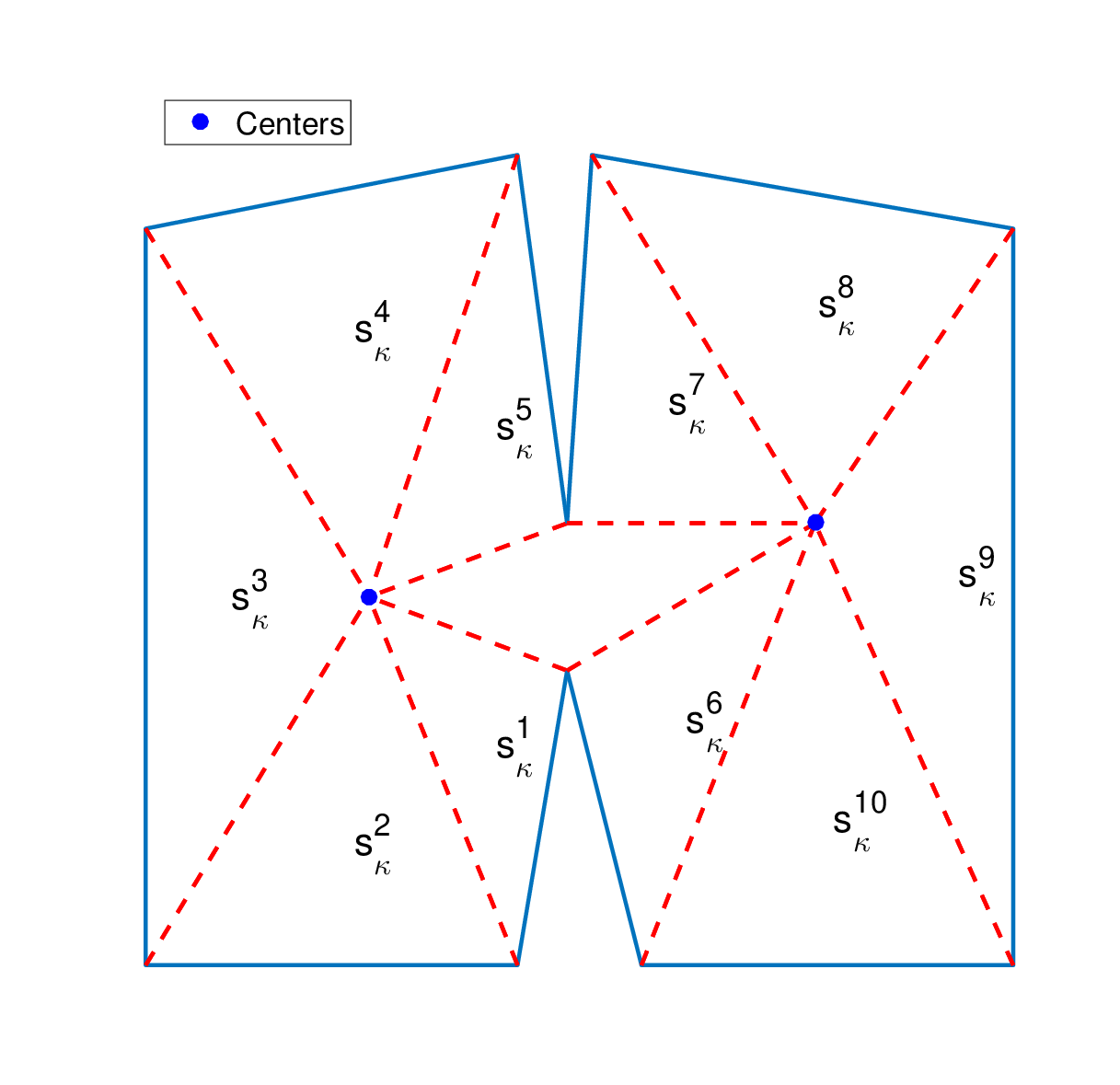}
\end{center}
\caption{ 30-gon with $\cup_{i=1}^{30} \bar{s}_\k^i = \bar{\kappa}$ (left); star shaped polygon with $\cup_{i=1}^{10} \bar{s}_\k^i \subsetneq \bar{\kappa}$ (right).}
  \label{polygons}
  \vspace{-0.5cm}
\end{figure}

\begin{assumption}[Spatial mesh] \label{A1}
For any $\kappa \in\mathcal{T}$, the element boundary $\partial \k$ can be sub-triangulated into non-overlapping $(d-1)$-dimensional simplices $\{F_\k^i\}_{i=1}^n$. Moreover, there exists a set of non-overlapping $d$-dimensional simplices $\{s_\k^i\}_{i=1}^n$ contained in $\kappa$, such that $\partial s_\k^i\cap \partial \k = F_k^i$, and
\begin{equation}\label{shape_relation1}
h_\k \leq C_s \frac{d|s_\k^i|}{|F_\k^i|},
\end{equation}
with $C_s>0$ constant independent of the discretization parameters, the number of faces per element, and the face measures.
\end{assumption}

\begin{remark}
Meshes made of polytopes which are finite union of polytopes with the latter being uniformly star-shaped with respect to the largest inscribed circle will satisfy Assumption \ref{A1}.
\end{remark}

In Figure~\ref{polygons}, we exemplify two different  polygons satisfying the above mesh regularity assumption.
We note that the assumption does not give any restrictions on neither the number nor the measure of the elemental faces.
Indeed, shape irregular simplices $s_\k^i$, with base $|F_\k^i|$ of small size compared to the corresponding height $d|s_\k^i|/|F_\k^i|$, are allowed: the height, however, has to be comparable to $h_\kappa$; cf., the left polygon on Figure~\ref{polygons}.
Further, we note that the union of the simplices $s_\k^i$ does not need to cover the whole element $\kappa$, as in general it is sufficient to assume that
\begin{equation} \label{shape_relation2}
\cup_{i=1}^N \bar{s}_\k^i \subseteq \bar{\kappa};
\end{equation}
cf., the right polygon on Figure~\ref{polygons}.
 In the following, we shall use $s_\k^F$ instead of  $s_k^i$ when no confusion is likely to occur.

\begin{figure}[t]
\begin{center}
\begin{tabular}{cc}
\vspace{-0.2cm}
\hspace{-1.2 cm} \includegraphics[scale=0.25]{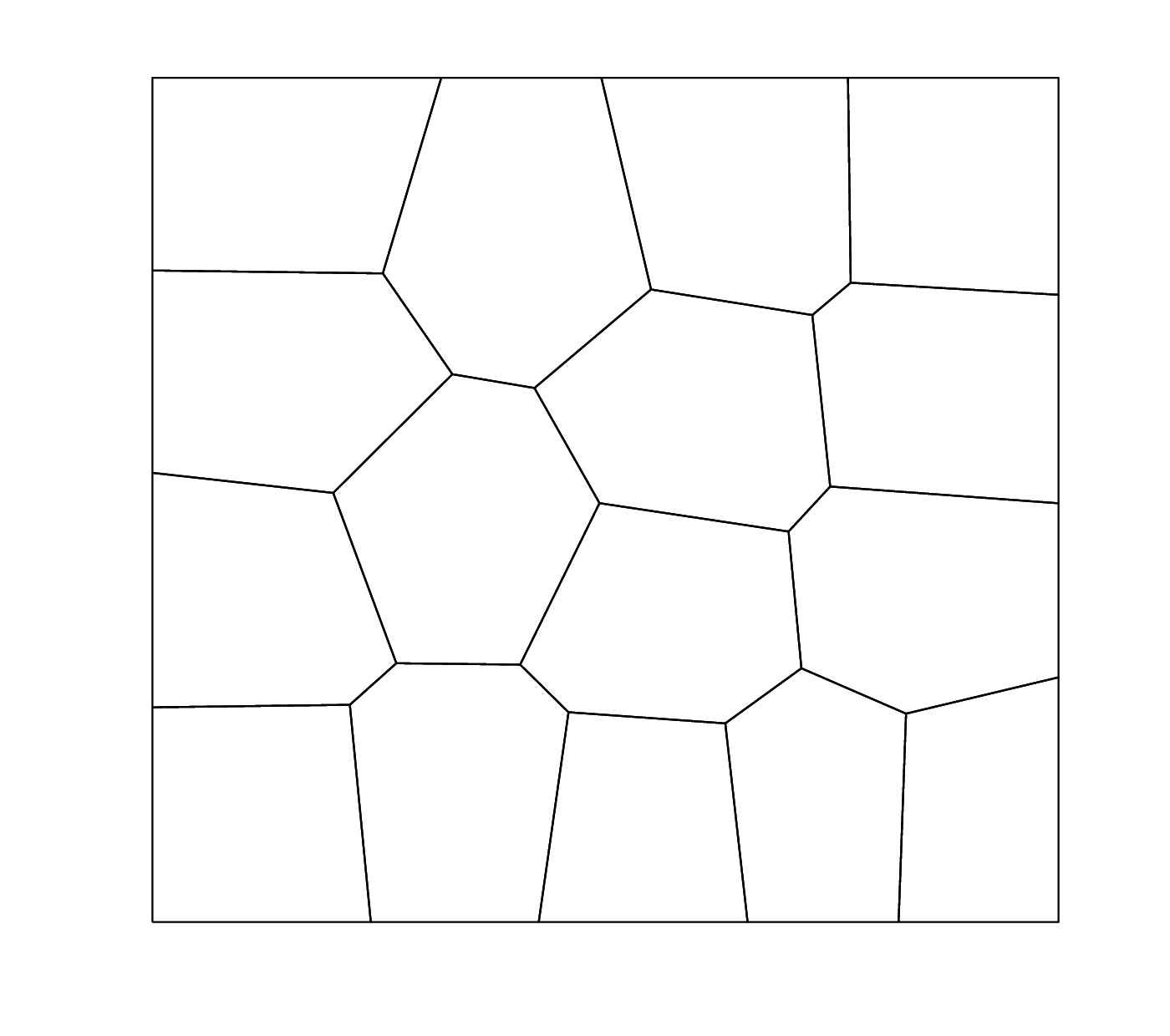} &
\hspace{-.5 cm} \includegraphics[scale=0.27]{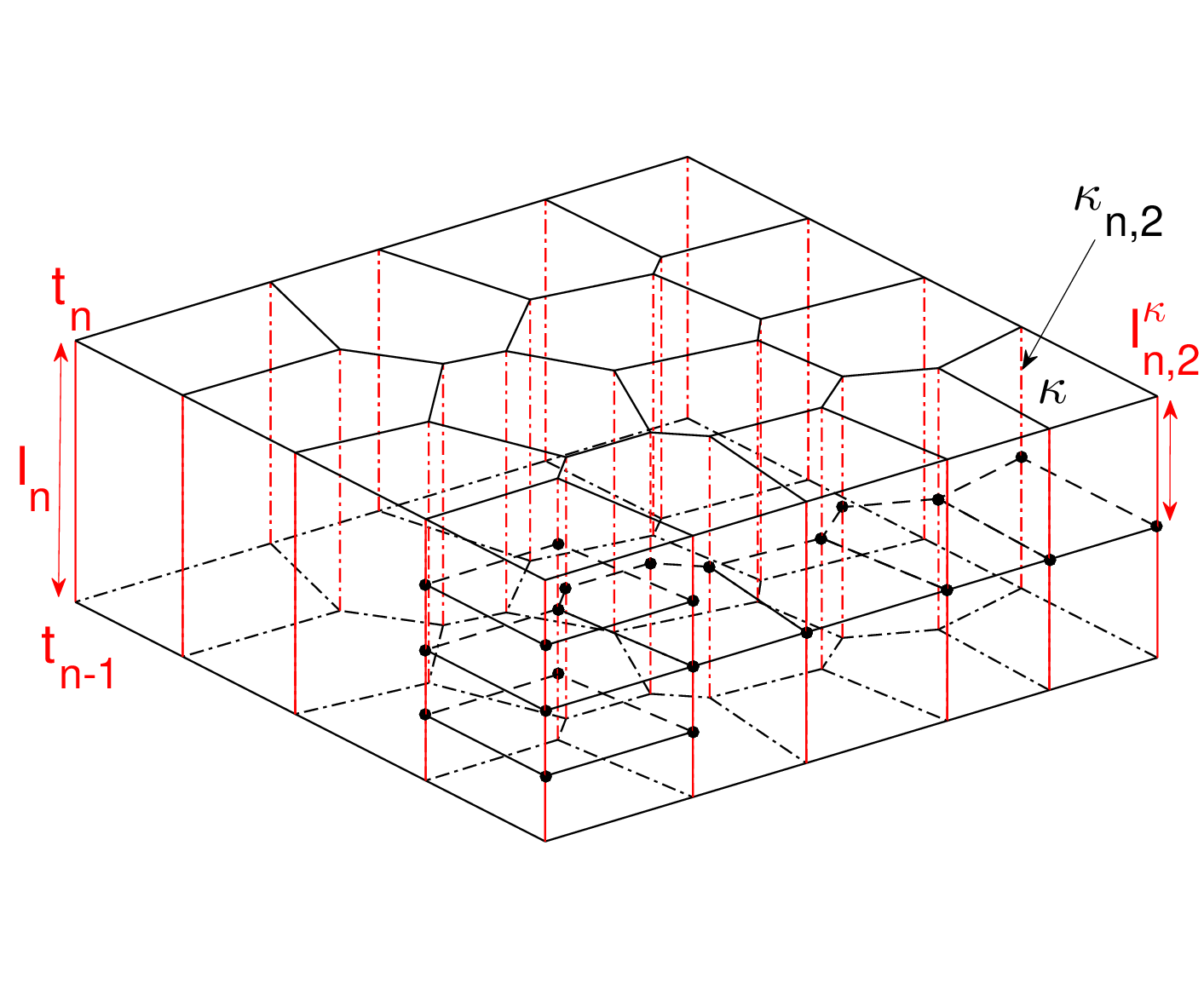}\\
\vspace{-0.2cm}(a) & (b)
\end{tabular}
\end{center}
\caption{(a). $16$ polygonal spatial elements over the spatial domain $\Omega =(0,1)^2$; (b) space-time elements over $I_n \times \Omega$ under the local time  partition $\mathcal{U}_n(\mathcal{T})$.} \label{spatial_temporal_elem}
\vspace{-0.4cm}
\end{figure}
The space-time mesh $\mathcal{U}\times \mathcal{T}$ is allowed to include locally smaller time-steps as follows.
Over each time interval $I_n$, $n=1,\dots, N_t$, we may consider the local time partition $\timesub$ that, for each space element $\el$, yields a subdivision of the time interval $I_n$  into $N_{n}^{\k}$ local time steps $I_{n,j}^\k = (t_{n,j-1}, t_{n,j})$, $j=1,\dots,N_{n}^{\k}$, with respect to the local time nodes $\{ t_{n,j}\}_{j=0}^{N_{n}^{\k}}$, defined so that  $t_{n-1}:= t_{n, 0} <t_{n,1}<\dots <t_{n,N_{n}^{\k}}:=t_{n}$. Further, we set  $\tsubmesh{j}:=t_{n,j} - t_{n,j-1}$ to be the length of $I_{n,j}^\k$.

For every time interval $I_n \in\mathcal{U}$ and every space element $\el$, with local time partition $\timesub$, we define the $(d+1)$-dimensional space-time \emph{prismatic} element $\kappa_{n,j} := I_{n,j}^\k \times \kappa$; see Figure \ref{spatial_temporal_elem} for an illustration. Let $p_{\kappa_{n,j}}$ denote the (positive) \emph{polynomial degree} of the space-time element $\kappa_{n,j}$, and collect $p_{\kappa_{n,j}}$ in
the vector ${\bf p}:=(p_{\kappa_{n,j}}:  \kappa_{n,j} \in \timesub\times \mathcal{T})$. We define the \emph{space-time finite element space} with respect to the time interval $I_n$, subdivision $\mathcal{T}$,  local time partition $\timesub $, and  polynomial degree ${\bf p}$ by
\[
\stnfessub{n}:=\{u\in L_2(I_n \times \Omega)
:u|_{\kappa_{n,j}}\in\mathcal{P}_{p_{\kappa_{n,j}}}(\kappa_{n,j}),  \kappa_{n,j} \in \timesub\times \mathcal{T}\},
\]
where
$\mathcal{P}_{p_{\kappa_{n,j}}}(\kappa_{n,j})$ denotes the space of polynomials of \emph{total degree} $p_{\kappa_{n,j}}$ on $\kappa_{n,j}$. The space-time finite element space $\stfes$ with
respect to $\mathcal{U}$, $\mathcal{T}$, ${\bf p}$, and, implicitly, $\timesub$, is defined as  $\stfes = \bigoplus_{n=1}^{N_t} \stnfessub{n}$.
As is standard in this context of local time-stepping, the resulting dG method is implicit with respect to all the local time-steps within the same time-interval $I_n$.

Note that the local elemental polynomial spaces employed in the definition
of $\stfes$ are defined in the \emph{physical coordinate system}, without the need to map from a given reference/canonical frame; cf.~\cite{cangiani2013hp}. This setting is crucial to retain full approximation of the finite element space, independently of the element shape. Note that $\stfes$ employs fewer degrees of freedom per space-time element as compared to  tensor-product polynomial bases of the same order in space and time.

We shall also make use of the broken space-time Sobolev space $H^\bold{l}(J \times \Omega,\mathcal{U};\mathcal{T})$,
up to composite order $\bold{l}:=(l_{\kappa_{n,j}}:  \kappa_{n,j} \in \stelemsub, n=1,\dots, N_t )$ defined by
\begin{equation}\label{brokenSob}
H^\bold{l} (J \times \Omega, \mathcal{U};\mathcal{T})=\{ u\in L_2(J \times \Omega):u|_{\kappa_{n,j}} \in H^{l_{\kappa_{n,j}}}(\kappa_{n,j}) , \kappa_{n,j} \in \stelemsub \}.
\end{equation}
Let $h_{\kappa_{n,j}}$ denote the diameter of the space-time element $\kappa_{n,j}$; for convenience, we collect the $h_{\kappa_{n,j}}$ in the vector ${\bf h}:=(h_{\kappa_{n,j}}:  \kappa_{n,j} \in \stelemsub, n=1,\dots, N_t )$. Moreover, we define the broken Sobolev space $H^1 ( \Omega, \mathcal{T})$ with respect to the subdivision $\mathcal{T}$  as follow
\begin{equation}\label{brokenSob-space}
H^1 ( \Omega, \mathcal{T})=\{ u\in L_2( \Omega):u|_{\kappa} \in H^{1}(\kappa) , \kappa \in \mathcal{T} \}.
\end{equation}
For $u \in  H^1 ( \Omega, \mathcal{T})$, we define  the broken spatial gradient $(\nabla_h u)|_\kappa=\nabla(u|_\kappa)$, $\kappa\in \mathcal{T}$, which will be used to
construct the forthcoming dG method.


\subsection{Trace operators}\label{Trace operator}

We denote by $\stF$ a generic $d$-dimensional face of a space-time element $\kappa_{n,j} \in \stelemsub$, which should be distinguished from the $(d-1)$-dimensional face $F$ of the spatial element $\kappa \in \mathcal{T}$. For any space-time element $\kappa_{n,j} \in \stelemsub$, we define $\partial\kappa_{n,j}$ to be the union of all $d$-dimensional open faces $\stF$ of $\kappa_{n,j}$. For convenience, we further subdivide $\stF$ into two disjoint subsets
\begin{equation}\label{space-time-face}
\stF^\parallel :=\stF \subset I_{n,j}^\k\times \partial \k, \quad \text{and } \quad  \stF^\perp :=\stF \subset \partial I_{n,j}^\k \times \k,
\end{equation}
i.e., the parallel and perpendicular to the time direction boundaries, respectively. Hence, for each $\kappa_{n,j}$, there exist exactly two $d$-dimensional faces $\stF^\perp$ and the number of $d$-dimensional faces $\stF^\parallel$ is equal to the number of $(d-1)$-dimensional spatial faces $F$ of the spatial element $\kappa$.

Let $\kappa_{n,j_1}^1$ and $\kappa^2_{n,j_2}$ be two
adjacent space-time elements sharing a face $\stF^\parallel =\partial \kappa_{n,j_1}^1 \cap \partial \kappa_{n,j_2}^2$ and $(t,x)\in \stF^\parallel \subset J\times \Gamma_{\rm int}$; let also $\bar{\bold{n}}_{\kappa_n^1}$ and $\bar{\bold{n}}_{\kappa_n^2}$ denote the
outward unit normal vectors on $\stF^\parallel$, relative to $\partial\kappa_{n,j_1}^1$ and $\partial\kappa_{n,j_2}^2$, respectively.
Then, for $v$ and $\bold{q}$,  scalar- and vector-valued functions, respectively, smooth enough for their traces on $\stF^\parallel$ to be well defined, we define the  \emph{averages}
$\mean{v}:=\frac{1}{2}(v|_{\kappa_{n,j_1}^1}+v|_{\kappa_{n,j_2}^2})$, $\mean{\bold{q}}:=\frac{1}{2}(\bold{q}|_{\kappa_{n,j_1}^1}+\bold{q}|_{\kappa_{n,j_2}^2} ),$
and the \emph{jumps}
$
\jump{v} :=v|_{\kappa_{n,j_1}^1}\,{\bar{\bold{n}}}_{\kappa_n^1}+v|_{\kappa_{n,j_2}^2}\,{\bar{\bold{n}}}_{\kappa_n^2},$ $\jump{\bold{q}} := \bold{q}|_{\kappa_{n,j_1}^1}\cdot \bar{\bold{n}}_{\kappa_n^1} +\bold{q}|_{\kappa_{n,j_2}^2}\cdot \bar{\bold{n}}_{\kappa_n^2},
$
respectively.
On a boundary face $\stF^\parallel \subset J\times \Gamma_{\rm D} $ and $\stF^\parallel  \subset \partial \kappa_{n,j}$,
we set
$
\mean{v}=v|_{\kappa_{n,j}}$,  $ \mean{\bold{q}}=\bold{q}|_{\kappa_{n,j}}$, $ \jump{v}=v|_{\kappa_{n,j}}{\bar{\bold{n}}}_{\kappa_{n,j}}$, $\jump{\bold{q}}=\bold{q}|_{\kappa_{n,j}}\cdot {\bar{\bold{n}}}_{\kappa_{n,j}},
$
with~$\bold{n}_{\kappa_n}$ denoting the unit outward normal
vector on the boundary. Upon defining
\[
u_n^+ := \lim_{s\rightarrow 0^+} u(t_n+s), \ 0\leq n \leq N_t-1, \quad
u_n^- := \lim_{s\rightarrow 0^+} u(t_n-s), \ 1\leq n \leq N_t,
\]
the \emph{time-jump} across $t_{n}$, $n=1, \dots,N_t-1$ is given by
$\ujump{u}_{n}  := u_{n}^+-u_{n}^-$. Similarly, the \emph{time-jump} across the  interior time nodes  $t_{n,j}$, $j=1, \dots, N_{n}^\k-1$,  $n=1, \dots,N_t$ is given by
$\ujump{u}_{n,j}  := u_{n,j}^+-u_{n,j}^-.$


\section{Space-time dG method}\label{DGFEM_ell_sec}

Equipped with the above notation, we can now describe the space-time discontinuous Galerkin method for the problem \eqref{Problem}, reading: find $u_h\in \stfes$ such that
\begin{equation}\label{adv_galerkin_dg}
B(u_h,v_h)=\ell(v_h) , \quad\text{ for all }v_h\in \stfes,
\end{equation}
where $B:\stfes \times \stfes \to \mathbb{R}$ is defined as
\begin{eqnarray}\label{new-bilinear}
B(u,v) :=&& \sum_{n=1}^{N_t} \int_{I_n}  \big(  ( \partial_t u, v) + a(u,v) \big)  \ud  t + \sum_{n=2}^{N_t}(\ujump{u}_{n-1},v^+_{n-1})+(u^+_{0},v^+_{0}) \\
&+& \su  \sum_{n=1}^{N_t} \sum_{j=2}^{N_n^\k}(\ujump{u}_{n,j-1},v^+_{n,j-1})_\k  ,\nno
\end{eqnarray}
with the spatial bilinear form $a(\cdot, \cdot)$ given by
\[
a(u,v) := \su \int_\kappa{\bf a}\nabla u \cdot \nabla v   \ud x - \int_{ \Gamma } \Big( \mean{{\bf {a}} \nabla u }\cdot \jump{v} + \mean{{\bf a} \nabla v }\cdot \jump{u}
 -   \sigma \jump{u} \cdot \jump{v}  \Big)  \ud s,
\]
and the linear functional $\ell:\stfes \to \mathbb{R}$ given by
\[
\ell(v)
:= \sum_{n=1}^{N_t} \int_{I_n}  \Big( (f , v)
-\int_{\Gamma_{\rm D}} g_{\rm D} \Big( ({\bf a} \nabla_h v ) \cdot \bold{n} -\sigma v \Big)\ud s  \Big) \ud t +(u_0,v^+_{0}).
\]
The non-negative function $\sigma \in L_\infty(J\times\Gamma)$ appearing in $a$ and $\ell$ above is referred to as the \emph{discontinuity-penalization parameter}; its precise definition, depending on the diffusion tensor ${\bf a}$ and on the discretization parameters, will be given in Lemma~\ref{norm-relation} below.

The use of prismatic meshes is key in that it permits to solve for each time-step separately: for each time interval $I_n \in \mathcal{U}$, $n=2,\dots,N_t$, the solution $U_n  = u_h |_{I_n}  \in \stnfessub{n}$ is given by:
\begin{eqnarray}\label{timestep-bilinear}
&& \int_{I_n}    ( \partial_t U_n, V_n)  + a(U_n,V_n)   \ud  t + (U^+_{n-1},V^+_{n-1})
+ \su  \sum_{j=2}^{N_n^\k}(\ujump{u}_{n,j-1},v^+_{n,j-1})_\k
\\
&&~~~= \int_{I_n}  \Big( (f , V_n)
-\int_{\Gamma_{\rm D}} g_{\rm D} \big( ({\bf a} \nabla_h V_n ) \cdot \bold{n} -\sigma V_n \big)\ud s  \Big) \ud t +(U^-_{n-1},V^+_{n-1}), \nno
 \end{eqnarray}
 for all $V_n\in \stnfessub{n}$,
with $U_{n-1}^-$  serving as the initial datum at time step $I_n$; for $n=1$, we set $U_0^- = u_0$.


In the interest of simplicity of the presentation, we shall not explicitly carry through the local time-steps in the stability and the a-priori error analysis that follows. Indeed, the general case including local time partitions can be derived by slightly modifying the analysis in a straightforward fashion. Removing the local time-step notation, the last term in the dG bilinear form~\eqref{new-bilinear} and the last term on the left hand side of \eqref{timestep-bilinear} vanish.

\section{Stability}\label{Stability}
We shall establish the unconditional stability of the above space-time dG method, via the derivation of an inf-sup condition for arbitrary aspect ratio between the time-step and the local spatial mesh-size. The proof circumvents the global shape-regularity assumption, required in the respective result from \cite[{Theorem} 5.1]{cangiani2015hp} for the case of parabolic problems, and also removes the assumption of uniformly bounded number of faces per element.


\subsection{Inverse estimates}\label{Inverse-estimation}
We review some $hp$--version inverse estimates.

\begin{lemma}\label{lemmal_inv}
Let $\kappa_n \in \stelem$, and let Assumption \ref{A1} to hold. Then, for each $v\in\mathcal{P}_{p_{\kappa_n}}(\kappa_n)$,
we have
\begin{equation}\label{inv_est_gen}
\norm{v}{\stF^\parallel}^2 \le
\frac{(p_{\kappa_n}+1)(p_{\kappa_n}+d)}{d}\frac{|F|}{|s^F_\kappa|}\norm{v}{L_2(I_n; L_2(s^F_\kappa))}^2,
\end{equation}
with $\stF^\parallel = I_n\times F$, $F\subset\partial \kappa\cap \partial s^F_\k$ and $s^F_\k$ as in Assumption \ref{A1} sharing $F$ with $\kappa$.
\end{lemma}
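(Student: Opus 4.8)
The plan is to reduce the bound on the $d$-dimensional space-time face $\stF^\parallel = I_n\times F$ to a purely spatial polynomial trace inequality on the simplex $s^F_\kappa$, exploiting the prismatic (tensor-product-in-time-but-not-in-space) structure of $\kappa_n = I_n\times\kappa$. First I would observe that for $v\in\mathcal{P}_{p_{\kappa_n}}(\kappa_n)$ — total degree $p:=p_{\kappa_n}$ in the $(d+1)$ physical variables $(t,x)$ — the slice $v(t,\cdot)$ at any fixed $t\in I_n$ is a polynomial of total degree at most $p$ in the $d$ spatial variables, i.e. $v(t,\cdot)\in\mathcal{P}_p(\kappa)$, and in particular its restriction to the face $F\subset\partial\kappa$ makes sense. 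Therefore
\[
\norm{v}{\stF^\parallel}^2 = \int_{I_n}\norm{v(t,\cdot)}{F}^2\ud t,
\]
and it suffices to prove the spatial inequality
\[
\norm{w}{F}^2 \le \frac{(p+1)(p+d)}{d}\frac{|F|}{|s^F_\kappa|}\norm{w}{s^F_\kappa}^2
\]
for every $w\in\mathcal{P}_p(\kappa)$ (equivalently $w\in\mathcal{P}_p(s^F_\kappa)$, since $s^F_\kappa\subset\kappa$ and $F\subset\partial s^F_\kappa$), then integrate in $t$.

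The spatial inequality is the standard $hp$ inverse trace estimate on a simplex, going back to Warburton--Hesthaven and used in exactly this form in \cite{cangiani2013hp}: for a $d$-simplex $S$ with a face $F\subset\partial S$ and $w\in\mathcal{P}_p(S)$,
\[
\norm{w}{F}^2\le\frac{(p+1)(p+d)}{d}\frac{|F|}{|S|}\norm{w}{S}^2.
\]
I would either invoke this directly as a known result or sketch its proof: map the reference simplex $\hat S$ (with reference face $\hat F$ carrying the bound $(p+1)(p+d)/d$, obtained by expanding $w$ in an orthonormal basis of $\mathcal{P}_p(\hat S)$ adapted to the face and summing the squared trace norms of the basis functions, which telescopes to this explicit constant) to $S$ by an affine map; the Jacobian factors $|S|/|\hat S|$ and $|F|/|\hat F|$ combine to give exactly the stated scaling, with the constant left untouched because it is a pure $p$-dependent number on the reference configuration. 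Applying this with $S = s^F_\kappa$ and $F = F^i_\kappa = \partial s^F_\kappa\cap\partial\kappa$ yields the displayed spatial estimate, and integrating over $t\in I_n$ and noting $\int_{I_n}\norm{w(t,\cdot)}{s^F_\kappa}^2\ud t = \norm{v}{L_2(I_n;L_2(s^F_\kappa))}^2$ completes the argument.

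The only genuinely delicate point — and the one I would be careful to state — is that nothing here uses shape-regularity of $\kappa$ or a bound on its number of faces: the constant $(p+1)(p+d)/d$ is intrinsic to the reference simplex, and the geometry of $\kappa$ enters only through the ratio $|F|/|s^F_\kappa|$, which is precisely the quantity controlled (via $h_\kappa\le C_s\, d|s^F_\kappa|/|F|$) by Assumption~\ref{A1}. Note that Assumption~\ref{A1} itself is not actually needed for the inequality \eqref{inv_est_gen} as written — it only guarantees the existence of the simplices $s^F_\kappa$ and will be used later to turn $|F|/|s^F_\kappa|$ into something involving $h_\kappa$ — so the hypothesis is invoked merely to make the objects in the statement well-defined. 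I would therefore present the proof essentially as: (i) fix $t$, slice, reduce to a spatial polynomial on $s^F_\kappa$; (ii) apply the affine-mapped reference-simplex trace inequality; (iii) integrate in $t$. I do not anticipate a serious obstacle; the one thing to get right is bookkeeping the total-degree structure under the slicing, ensuring $v(t,\cdot)\in\mathcal{P}_p(\kappa)$ for each fixed $t$, which is immediate since setting one variable to a constant cannot increase total degree.
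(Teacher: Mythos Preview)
Your proposal is correct and follows essentially the same approach as the paper: the paper's proof is a one-line remark that the result ``follows from the tensor-product structure of $\kappa_n = I_n\times \kappa$, together with \cite[Theorem 3]{warburton2003constants}'', which is exactly your slicing-in-$t$ reduction to the Warburton--Hesthaven simplex trace inequality followed by integration over $I_n$. Your additional observations (that Assumption~\ref{A1} is invoked only to make $s^F_\kappa$ well-defined, and that the total-degree structure is preserved under slicing) are accurate and useful elaborations of what the paper leaves implicit.
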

\begin{proof}
The proof follows from the tensor-product structure of $\kappa_n = I_n\times \kappa$, together with \cite[Theorem 3]{warburton2003constants}.
\end{proof}
\begin{lemma}\label{space-time-lemma_inv}
Let $v \in\mathcal{P}_{p_{\kappa_n}}(\kappa_n)$, $\kappa_n  \in \stelem$ and $\mathcal{G}\in\{\kappa_n,\stF^\parallel\}$. Then, there exist positive constants $\stinvF$ and $\stinvb$, independent of $v$, $\kappa_n$, $\tmesh$ and $p_{\kappa_n}$, such that
\begin{equation}\label{anisotropic-trace-inv}
\ltwo{v}{\stF^\perp}^2 \leq \stinvF \frac{p_{\kappa_n}^2}{ \tmesh} \ltwo{v}{\kappa_n}^2,
\end{equation}
\vspace{-0.6cm}
\begin{equation}\label{anisotropic-H-1-kappa}
\ltwo{\partial_t v}{\mathcal{G}}^2 \leq \stinvb \frac{p_{\kappa_n}^4}{\tmesh^2} \ltwo{v}{\mathcal{G}}^2.
\end{equation}
\end{lemma}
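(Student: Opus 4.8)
The plan is to exploit the tensor-product structure $\kappa_n = I_n \times \kappa$ and reduce each bound to a one-dimensional polynomial inverse estimate on the time interval $I_n$, combined (where needed) with $L_2$-stability in the spatial variable. The underlying fact is the classical $hp$-inverse inequality on a one-dimensional interval: for $w \in \mathcal{P}_p((a,b))$ one has $|w(a)|^2 + |w(b)|^2 \lesssim p^2 (b-a)^{-1}\|w\|_{L_2(a,b)}^2$ and $\|w'\|_{L_2(a,b)}^2 \lesssim p^4 (b-a)^{-2}\|w\|_{L_2(a,b)}^2$, with constants independent of $p$, $a$, $b$; these follow from Markov-type inequalities after affine scaling to a reference interval (see e.g.~\cite{schwab}).

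For \eqref{anisotropic-trace-inv}: writing $\stF^\perp$ as one of the two copies $\{t_{n-1}\}\times\kappa$ or $\{t_n\}\times\kappa$, I would fix $x \in \kappa$ and apply the one-dimensional trace-inverse estimate to the univariate polynomial $t\mapsto v(t,x)$, of degree at most $p_{\kappa_n}$, on $I_n$: this gives $|v(t_{n-1},x)|^2 + |v(t_n,x)|^2 \le \stinvF\, p_{\kappa_n}^2 \tmesh^{-1} \int_{I_n} |v(t,x)|^2 \ud t$. Integrating over $x \in \kappa$ and discarding the nonnegative term corresponding to the other perpendicular face yields \eqref{anisotropic-trace-inv}. (Note the degree in the $t$-direction may be strictly less than $p_{\kappa_n}$ because of the total-degree space, which only helps.)

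For \eqref{anisotropic-H-1-kappa}: since $\partial_t$ differentiates only in time, I treat the two cases $\mathcal{G} = \kappa_n$ and $\mathcal{G} = \stF^\parallel = I_n \times F$ uniformly. In either case $\mathcal{G} = I_n \times \omega$ with $\omega = \kappa$ or $\omega = F$, and $v|_{\mathcal{G}}$ is a polynomial of total degree $\le p_{\kappa_n}$ in the $d+1$ variables. Fixing the spatial point and applying the one-dimensional Markov inequality $\|\partial_t(v(\cdot,x))\|_{L_2(I_n)}^2 \le \stinvb\, p_{\kappa_n}^4 \tmesh^{-2}\|v(\cdot,x)\|_{L_2(I_n)}^2$, then integrating over $x\in\omega$, gives \eqref{anisotropic-H-1-kappa} by Fubini. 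The constants $\stinvF, \stinvb$ are exactly the constants from the scaled one-dimensional estimates and are therefore independent of $v$, $\kappa_n$, $\tmesh$, and $p_{\kappa_n}$.

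I do not anticipate a serious obstacle here: the only point requiring a little care is making explicit that the perpendicular-face estimate should be stated for the union of both faces $\stF^\perp$ (or for each individually) and that dropping one term is legitimate since all terms are nonnegative; and that for \eqref{anisotropic-H-1-kappa} the estimate on a face $\stF^\parallel$ needs the polynomial restricted to that face, which is again of degree $\le p_{\kappa_n}$ so the one-dimensional argument applies verbatim. Everything else is routine scaling of univariate polynomial inequalities, so the proof is short. One might alternatively cite a multivariate anisotropic inverse estimate directly, but the slice-and-integrate argument keeps the constants transparent and the dependence on $\tmesh$ sharp.
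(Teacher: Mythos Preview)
Your argument is correct and is precisely the approach the paper takes: the paper's proof merely notes that the estimates follow immediately from the tensor-product structure $\kappa_n = I_n \times \kappa$ (citing standard references for anisotropic inverse estimates), which is exactly the slice-and-integrate reduction to one-dimensional Markov/trace inequalities that you spell out. Your write-up simply makes explicit what the paper leaves to references.
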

\begin{proof}
 The proofs are immediate upon observing the prismatic (tensor-product) structure of $\kappa_n = I_n\times \kappa$; see, e.g., \cite{thesis,georgoulis2007discontinuous,schotzau2013hp} for similar arguments.
\end{proof}

\subsection{Coercivity and continuity}
For the error analysis, we introduce an inconsistent bilinear form $\tilde{a}(\cdot,\cdot)$, (cf.~\cite{perugia_schotzau}): for $u,v\in\mathcal{S}:= L_2(J ;H^1(\Omega))\cap H^1(J ; H^{-1}(\Omega))  +\stfes$, we set
\begin{eqnarray}\label{in-bilinear}
\tilde{B}(u,v) &:=& \sum_{n=1}^{N_t} \int_{I_n}    \big( ( \partial_t u, v) + \tilde{a}(u,v)  \big) \ud  t + \sum_{n=2}^{N_t}(\ujump{u}_{n-1},v^+_{n-1})+(u^+_{0},v^+_{0}),
\end{eqnarray}
where
\[\tilde{a}(u,v) := \su \int_\kappa{\bf a}\nabla u \cdot \nabla v   \ud x - \int_{ \Gamma } \Big( \mean{{\bf {a}} \vecL(\nabla u) }\cdot \jump{v} + \mean{{\bf a} \vecL(\nabla v)}\cdot \jump{u}  -  \sigma \jump{u} \cdot \jump{v} \Big)   \ud s,
\]
and a modified linear functional $\tilde{\ell}:\mathcal{S}\to\mathbb{R}$, given by
\[
\tilde{\ell}(v)
:= \sum_{n=1}^{N_t} \int_{I_n}  \Big( (f , v)
-\int_{\Gamma_{\rm D}} g_{\rm D} \Big( {\bf a} \vecL(\nabla_h v ) \cdot \bold{n} -\sigma v \Big)\ud s  \Big) \ud t +(u_0,v^+_{0});
\]
here, $\vecL:[L_2(J; L_2 (\Omega))]^d \rightarrow [\stfes]^d$ denotes the vector-valued $L_2$--projection onto  $[ \stfes]^d$. It is immediately clear, therefore, that
$B(u_h,v_h)=\tilde{B}(u_h,v_h) $ and that  $l(v_h) = \tilde{l}(v_h)$, for all $ v_h\in \stfes.$

Let $\sqrt{{\bf a}}$ be the square root of ${\bf a}$ and set $\bar{\bold{a}}_{\kappa_n} = |\sqrt{{\bf a}} |_2^2|_{\kappa_n}$, for $\kappa_n\in \stelem$, with $|\cdot |_2$ denoting the matrix $l_2$--norm. We introduce the dG--norm $\ncdg{\cdot}$:
\begin{equation} \label{DG_norm}
\ncdg{v} := \Big(  \int_{J}\ndg{v}_{\rm d}^2 \ud t + \frac{1}{2} \ltwo{v^+_0}{}^2+ \sum_{n=1}^{N_t-1} \frac{1}{2} \ltwo{\ujump{v}_n}{}^2+ \frac{1}{2} \ltwo{v^-_{N_t}}{}^2\Big)^{1/2},
\end{equation}
with $\ndg{v}_{\rm d}: = \big(\ltwo{\sqrt{\bf a} \nabla_h v}{} ^2 +\ltwo{\sqrt{\sigma} \jump{v}}{\Gamma}^2 \big)^{1/2}.$

\begin{lemma}\label{norm-relation}
Let  Assumption~\ref{A1} hold and let $\sigma: J\times \Gamma \to\mathbb{R}_+$ be defined face-wise over all $\stF^\parallel $ by
\begin{equation}
\sigma({t, x}) :=
\begin{array}{ll}
C_{\sigma}{\displaystyle   \max_{\kappa_n: {\stF^\parallel }\cap \bar{\kappa}_n\neq \emptyset}\Big\{
\frac{\bar{\bold{a}}_{\kappa_n}^2 (p_{\kappa_n}+1)(p_{\kappa_n}+d) }{h_{\k}} \Big\} }, &  \stF^\parallel  \subset J \times \Gamma_{\rm}
\end{array}, \label{eq:IPdef}
\end{equation}
with $C_{\sigma}>0$ sufficiently large, independent of discretization parameters and the number of faces per element.  Then, for all $v\in \mathcal{S}$, we have
\begin{equation}\label{coer}
\int_{J} \tilde{a}(v,v)\ud{t}  \geq C_{\rm d}^{\rm coer} \int_{J} \ndg{v}_{\rm d} ^2 \ud{t},
\end{equation}
\vspace{-0.4cm}
\begin{equation}\label{cont}
\int_{J}  \tilde{a}(w,v)\ud{t}  \le C_{\rm d}^{\rm cont} \int_{J} \ndg{w}_{\rm d}\,\ndg{v} _{\rm d} \ud{t},
\end{equation}
\vspace{-0.4cm}
\begin{equation}\label{bilinear-DG-norm}
\tilde{B}(v,v)\geq  \bar{C}\ncdg{v}^2,
\end{equation}
 for all $v\in \mathcal{S}$, with the positive constants $C_{\rm d}^{\rm coer}$, $C_{\rm d}^{\rm cont}$ and  $\bar{C}$, independent of the discretization parameters, the number of faces per element, and of $v$.
\end{lemma}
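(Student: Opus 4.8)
The plan is to prove the three estimates in turn, the first being the crux and the others following by relatively standard manipulations.

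\medskip

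\textbf{Step 1: Coercivity \eqref{coer}.} Fix $t\in J$ and work with $\tilde a(v,v)$ for $v\in\mathcal{S}$, spatial variable only. Expanding,
\[
\tilde a(v,v) = \su\int_\kappa {\bf a}\nabla v\cdot\nabla v\ud x - 2\int_\Gamma \mean{{\bf a}\vecL(\nabla v)}\cdot\jump{v}\ud s + \int_\Gamma \sigma|\jump{v}|^2\ud s.
\]
The only term needing care is the cross term. I would bound $|\mean{{\bf a}\vecL(\nabla v)}\cdot\jump{v}|$ face-wise, writing $\mean{{\bf a}\vecL(\nabla v)} = \mean{\sqrt{\bf a}\,\sqrt{\bf a}\,\vecL(\nabla v)}$, pulling out $\bar{\bold a}_{\kappa}$ (or rather $\sqrt{\bar{\bold a}_{\kappa}}$) and $|F|^{1/2}$ factors, and then applying the polytopic inverse/trace inequality of Lemma~\ref{lemmal_inv} to control $\|\vecL(\nabla v)\|_{\stF^\parallel}$ by $\|\vecL(\nabla v)\|$ on the simplex $s_\kappa^F$ times $(p_\kappa+1)(p_\kappa+d)|F|/(d|s_\kappa^F|)$; Assumption~\ref{A1} turns $|F|/|s_\kappa^F|$ into $C_s d/h_\kappa$. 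Crucially, the $L_2$-stability $\|\vecL(\nabla v)\|\le\|\nabla v\|$ of the projection keeps the bound in terms of $\|\sqrt{\bf a}\nabla_h v\|$, and since each simplex $s_\kappa^F$ is used by only one face, summing over faces does \emph{not} pick up a factor of the number of faces. Then Young's inequality with a small parameter, absorbing the gradient part into the first term and the jump part into the penalty term, gives \eqref{coer} provided $C_\sigma$ in \eqref{eq:IPdef} is chosen large enough; the definition of $\sigma$ is precisely engineered to make this absorption work with constants independent of face count.

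\medskip

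\textbf{Step 2: Continuity \eqref{cont}.} This repeats the estimates of Step~1 but with $w\neq v$: bound the volume terms by Cauchy--Schwarz directly, and the two cross terms $\mean{{\bf a}\vecL(\nabla w)}\cdot\jump{v}$ and $\mean{{\bf a}\vecL(\nabla v)}\cdot\jump{w}$ by the same trace/inverse-inequality argument, this time splitting the $\sqrt{\sigma}$ between the two factors so that one cross term is controlled by $\ndg{w}_{\rm d}\,\|\sqrt\sigma\jump{v}\|_\Gamma$ and the other symmetrically. Combining and integrating over $J$ yields \eqref{cont} with $C_{\rm d}^{\rm cont}$ depending only on $C_s$, $C_\sigma$ and the $l_2$-stability of $\vecL$.

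\medskip

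\textbf{Step 3: Full coercivity \eqref{bilinear-DG-norm}.} Take $v\in\mathcal{S}$ and evaluate $\tilde B(v,v)$. The term $\sum_n\int_{I_n}(\partial_t v,v)\ud t$ is handled by the standard dG-time-stepping identity: on each $I_n$, $\int_{I_n}(\partial_t v,v)\ud t = \tfrac12\|v_n^-\|^2 - \tfrac12\|v_{n-1}^+\|^2$, and combining with the time-jump terms $\sum_{n=2}^{N_t}(\ujump{v}_{n-1},v_{n-1}^+)$ and $(v_0^+,v_0^+)$ via the algebraic identity $(a-b,a) = \tfrac12|a|^2-\tfrac12|b|^2+\tfrac12|a-b|^2$ telescopes to $\tfrac12\|v_0^+\|^2 + \sum_{n=1}^{N_t-1}\tfrac12\|\ujump{v}_n\|^2 + \tfrac12\|v_{N_t}^-\|^2$ — exactly the temporal part of $\ncdg{\cdot}^2$. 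Adding $\int_J\tilde a(v,v)\ud t\ge C_{\rm d}^{\rm coer}\int_J\ndg{v}_{\rm d}^2\ud t$ from Step~1 and setting $\bar C = \min\{1,C_{\rm d}^{\rm coer}\}$ gives \eqref{bilinear-DG-norm}.

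\medskip

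\textbf{Main obstacle.} The delicate point is Step~1: obtaining the constants in \eqref{coer}–\eqref{cont} \emph{independently of the number of faces per element}. The naive trace inequality over $\partial\kappa$ produces a factor proportional to the number of faces; the fix is to route every face trace through its \emph{own} sub-simplex $s_\kappa^F$ (disjointness from \eqref{shape_relation1}–\eqref{shape_relation2}) and to exploit the $L_2$-stability of $\vecL$ so no inverse estimate on $\nabla v$ itself is needed. Getting the bookkeeping of the $\bar{\bold a}_{\kappa_n}$, $h_\kappa$, and $(p_{\kappa_n}+1)(p_{\kappa_n}+d)$ factors to match \eqref{eq:IPdef} exactly is the only genuinely technical part; everything else is routine.
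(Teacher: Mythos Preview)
Your proposal is correct and follows essentially the same approach as the paper: the cross term is bounded face-wise via the trace inequality of Lemma~\ref{lemmal_inv} onto the disjoint sub-simplices $s_\kappa^F$ (so summing over faces gives at most $\|\vecL(\nabla v)\|_{\kappa_n}^2$ with no face-count factor), combined with $L_2$-stability of $\vecL$, uniform ellipticity, and Young's inequality, after which the definition \eqref{eq:IPdef} of $\sigma$ allows absorption for $C_\sigma$ large; your Step~3 telescoping and choice $\bar C=\min\{1,C_{\rm d}^{\rm coer}\}$ also coincide with the paper's argument.
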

\begin{proof}
The proof of these inequalities is standard (see, e.g., \cite{DiPietroErn}) for the most part and, thus, we shall focus on the treatment of the face terms in view of the new definition of the discontinuity penalization parameter \eqref{eq:IPdef}. To this end, using  \eqref{inv_est_gen}, the stability of the $L_2$-projection $\mathbf{\Pi}_2$, the uniform ellipticity \eqref{uniform ellipticity} together with \eqref{shape_relation1} and \eqref{shape_relation2}, we have
 we deduce that
\begin{eqnarray}\label{coercivity-3}
&&\int_{J} \int_{\Gamma}\mean{\mathbf{a\Pi}_2(\nabla_h v)}\cdot\jump{v}\ud s  \ud{t} \nonumber  \\
&&\leq
\epsilon \sum_{\k_n \in \stelem}\sum_{ \stF^{\parallel} \subset \partial \k_n}
\sigma^{-1}  \bar{\bold{a}}_{\kappa_n}^2 \frac{(p_{\kappa_n}+1)(p_{\kappa_n}+d)}{d}\frac{|F|}{|s^F_\kappa|}\norm{\mathbf{\Pi}_2 \nabla v}{L_2(I_n; L_2(s^F_\kappa))}^2     \nonumber \\
&&+  \frac{1}{4\epsilon}  \sum_{ \stF^{\parallel} \subset J\times \Gamma}    \ltwo{\sigma^{1/2} \jump{v}}{\stF^\parallel}^2   \nonumber \\
&&\leq
\frac{\epsilon C_s}{\theta C_\sigma } \sum_{\k_n \in \stelem}
\norm{ \sqrt{\bold{a}} \nabla v}{\k_n}^2
+  \frac{1}{4\epsilon}  \sum_{ \stF^{\parallel} \subset J\times \Gamma}    \ltwo{\sigma^{1/2} \jump{v}}{\stF^\parallel}^2.
\end{eqnarray}
Thereby, we deduce
\begin{eqnarray}
\int_{J} \tilde{a}(v,v) \ud{t}
&\geq& ( 1-\frac{2\epsilon C_s}{\theta C_\sigma}  )
\sum_{\k_n \in \stelem}   \norm{\sqrt{\bold{a}}  \nabla v}{\k_n}^2   +(1 -\frac{1}{2\epsilon})  \int_{J} \int_{\Gamma}     \ltwo{\sigma^{1/2} \jump{v}}{}^2 \ud{s}\ud{t}. \nonumber
\end{eqnarray}
Hence, the bilinear form $\tilde{a} (\cdot,\cdot)$ is coercive over ${\cal S} \times {\cal S}$  for
$\epsilon>1/2$ and $C_\sigma >  2 C_s\epsilon/ \theta$. $C_\sigma$ depends on  constant $C_s$, but is independent of number of faces per element.  The continuity relation \eqref{cont} can be proved by the similar arguments.
For \eqref{bilinear-DG-norm}, integration by parts on the first term on the right-hand side of \eqref{in-bilinear} along with (\ref{coer}) yield
\begin{eqnarray*}
\tilde{B}(v,v)
&\geq& C_{\rm d}^{\rm coer} \int_{J}   \ndg{v}_d^2 \ud t + \frac{1}{2} \ltwo{v^+_0}{}^2+ \sum_{n=1}^{N_t-1} \frac{1}{2} \ltwo{\ujump{v}_n}{}^2+ \frac{1}{2} \ltwo{v^-_{N_t}}{}^2 \geq \bar{C} \ncdg{v}^2,
\end{eqnarray*}
with $\bar{C}=\min\{1, C_{\rm d}^{\rm coer}\}$.  
\end{proof}

We stress that  the above  proof of coercivity of the elliptic part of the bilinear form follows different arguments from those used in  \cite{cangiani2013hp}. Our approach is dictated by the mesh regularity Assumption~\ref{A1} allowing for \emph{arbitrary} number of faces per element. In contrast, in \cite{cangiani2013hp}, no shape regularity was explicitly assumed at the expense of imposing a uniform bound on the number of faces per element. Clearly, the two approaches can be combined to produce admissible discretisations on even more general mesh settings; we refrain from doing so here in the interest of brevity and we refer to the forthcoming \cite{book} for the complete treatment. Nonetheless, all convergence and stability theory presented in this work is also valid under the mesh assumptions from \cite{cangiani2013hp}.

\begin{remark}
The coercivity constant may depend on the shape regularity constant $C_s$ and on the uniform ellipticity constant $\theta$. To avoid the dependence on the latter, it is possible to combine the present developments with the dG method proposed in \cite{GL05}; we refrain from doing so here, in the interest of simplicity of the presentation.
\end{remark}

\subsection{Inf-sup condition}\label{inf-sup}

 We shall now prove an inf-sup condition for the space-time dG method for a stronger \emph{streamline-diffusion} norm given by
\begin{equation}\label{SDG-norm}
\nsdg{v}^2:=\ncdg{v}^2+\sum_{\kappa_n \in \stelem} \stp \ltwo{ \partial_t {v}}{\kappa_n}^2,
\end{equation}
where $\stp:= \tmesh/\hat{p}_{\kappa_n}^2$,
for $p_{\kappa_n} \geq 1$ and $\hat{p}_{\kappa_n}$ defined as
\begin{equation}
\nonumber
\hat{p}_{\kappa_n}
:= \max_{\stF^\parallel \subset \partial\kappa_n} \Big\{ {\displaystyle
\max_{ \substack{\tilde{\kappa}_n \in\{\kappa_n ,\kappa^\prime_n\} \\
\stF^\parallel \subset  \partial\kappa_n \cap\partial \kappa^\prime_n  } }
\{ p_{\tilde{\kappa}_n}\} }
\Big\} \quad   \forall \kappa_n \in \stelem.
\end{equation}

\begin{theorem} \label{infsup  theorem}
Given Assumption \ref{A1}, there exists a constant $\Lambda_s>0$, independent of the temporal and spatial mesh sizes $\tmesh$, $h_\kappa$, of the polynomial degree $p_{\kappa_n}$ and of the number of faces per element, such that:
\begin{equation}\label{Inf-sup}
\inf_{{\nu\in \stfes\backslash \{0\}}} \sup_{{\mu\in \stfes
\backslash \{0\}}} \frac{\tilde{B}(\nu,\mu)}{\nsdg{\nu} \nsdg{\mu}}\geq \Lambda_s.
\end{equation}
\end{theorem}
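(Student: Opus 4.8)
The plan is to establish the inf-sup bound \eqref{Inf-sup} by the classical test-function augmentation technique used for streamline-diffusion inf-sup estimates in dG time-stepping (cf.\ \cite{cangiani2015hp}). Given $\nu\in\stfes\setminus\{0\}$, I would take the competitor $\mu:=\nu+\delta\,\zeta$, where $\delta>0$ is a small fixed parameter to be chosen and $\zeta$ is defined elementwise by $\zeta|_{\kappa_n}:=\stp\,\partial_t(\nu|_{\kappa_n})$ for all $\kappa_n\in\stelem$. Crucially $\zeta\in\stfes$: since $\nu|_{\kappa_n}$ has \emph{total} degree $p_{\kappa_n}$, the function $\partial_t(\nu|_{\kappa_n})$ has total degree $p_{\kappa_n}-1$, hence lies in $\mathcal{P}_{p_{\kappa_n}}(\kappa_n)$ -- this closure of total-degree spaces under differentiation is exactly what makes the argument go through in the present physical-frame setting. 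The weights $\stp=\tmesh/\hat p_{\kappa_n}^2$ are tuned precisely so that the temporal inverse estimates of Lemma~\ref{space-time-lemma_inv} become absorbable, and the use of the neighbour-wise maximal degree $\hat p_{\kappa_n}$ (as in the definition of $\sigma$ in \eqref{eq:IPdef}) guarantees that the per-element weights on the two sides of each interface $\stF^\parallel$ are comparable.

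\textbf{Lower bound on $\tilde B(\nu,\mu)$.} Split $\tilde B(\nu,\mu)=\tilde B(\nu,\nu)+\delta\,\tilde B(\nu,\zeta)$. For the first summand, \eqref{bilinear-DG-norm} gives $\tilde B(\nu,\nu)\ge\bar C\,\ncdg{\nu}^2$. For the second, the volume time-derivative term of $\tilde B$ reproduces precisely the streamline-diffusion contribution,
\[
\sum_{n=1}^{N_t}\int_{I_n}(\partial_t\nu,\zeta)\,\ud t=\sum_{\kappa_n\in\stelem}\stp\,\ltwo{\partial_t\nu}{\kappa_n}^2,
\]
while the remaining terms of $\tilde B(\nu,\zeta)$ -- namely $\int_{I_n}\tilde a(\nu,\zeta)\,\ud t$ and the time-jump/initial terms $(\ujump{\nu}_{n-1},\zeta^+_{n-1})$, $(\nu^+_0,\zeta^+_0)$ -- are handled by Cauchy--Schwarz and weighted Young inequalities together with the continuity bound \eqref{cont} and the inverse estimates \eqref{anisotropic-trace-inv}--\eqref{anisotropic-H-1-kappa}. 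A direct computation using $\stp=\tmesh/\hat p_{\kappa_n}^2$ and $p_{\kappa_n}\le\hat p_{\kappa_n}$ shows each of these terms is bounded by $\epsilon\sum_{\kappa_n}\stp\,\ltwo{\partial_t\nu}{\kappa_n}^2+C(\epsilon)\,\ncdg{\nu}^2$. Choosing $\epsilon$ and then $\delta$ small enough yields $\tilde B(\nu,\mu)\ge c_1\,\nsdg{\nu}^2$ with a constant $c_1>0$ of the asserted independence.

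\textbf{Upper bound on $\nsdg{\mu}$.} It remains to prove $\nsdg{\zeta}\le C_2\,\ncdg{\nu}\le C_2\,\nsdg{\nu}$, whence $\nsdg{\mu}\le(1+\delta C_2)\nsdg{\nu}$. Each ingredient of $\nsdg{\zeta}$ -- the broken gradient $\ltwo{\sqrt{\bf a}\nabla_h\zeta}{}$, the penalised jumps $\ltwo{\sqrt\sigma\jump{\zeta}}{\Gamma}$, the time traces $\zeta^\pm$ and time-jumps $\ujump{\zeta}_n$, and the term $\sum_{\kappa_n}\stp\ltwo{\partial_t\zeta}{\kappa_n}^2$ -- is reduced, upon inserting $\zeta=\stp\,\partial_t\nu$ and invoking \eqref{anisotropic-trace-inv}, \eqref{anisotropic-H-1-kappa} and \eqref{inv_est_gen}, to a constant times the corresponding ingredient of $\ncdg{\nu}$: the factors $\tmesh/\hat p_{\kappa_n}^2$ carried by $\stp$ cancel the factors $\hat p_{\kappa_n}^2/\tmesh$ (or their squares) produced by differentiating/tracing in time. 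Combining the two steps, $\tilde B(\nu,\mu)\ge c_1\,\nsdg{\nu}^2\ge\frac{c_1}{1+\delta C_2}\,\nsdg{\nu}\,\nsdg{\mu}$, which is \eqref{Inf-sup} with $\Lambda_s=c_1/(1+\delta C_2)$.

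\textbf{Main obstacle.} The delicate point throughout is to keep every constant independent of the aspect ratio $\tmesh/h_\kappa$ and of the number of faces per spatial element. Aspect-ratio independence is structural: $\stp$ carries no $h$-dependence and one must use the purely \emph{temporal} (anisotropic) inverse estimates of Lemma~\ref{space-time-lemma_inv} rather than isotropic ones. Face-count independence is the genuinely subtle ingredient: in $\int_{I_n}\tilde a(\nu,\zeta)\,\ud t$ and in $\ltwo{\sqrt\sigma\jump{\zeta}}{\Gamma}$ one sums over an a priori unbounded set of faces $\stF^\parallel\subset\partial\kappa_n$, and this must be carried out exactly as in the proof of Lemma~\ref{norm-relation} -- via the sub-simplex trace inequality \eqref{inv_est_gen}, the $L_2$-stability of $\vecL$, and Assumption~\ref{A1} (relations \eqref{shape_relation1}--\eqref{shape_relation2}) -- so that the face sums collapse into elementwise volume norms with a constant depending only on $C_s$, $\theta$ and $C_\sigma$. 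Care is also needed because $\hat p_{\kappa_n}$ and $\sigma$ involve maxima over face-neighbours, which is precisely what allows the interface jump terms to be split between the two adjacent space-time elements without loss.
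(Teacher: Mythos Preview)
Your strategy is essentially identical to the paper's: the same augmented test function $\mu=\nu+\delta\zeta$ with $\zeta|_{\kappa_n}=\stp\,\partial_t\nu$, the same two-step programme (lower bound on $\tilde B(\nu,\mu)$ via coercivity plus the streamline contribution, then upper bound on $\nsdg{\mu}$), and the same tools (Lemma~\ref{norm-relation} for coercivity/continuity, Lemma~\ref{space-time-lemma_inv} for the temporal inverse estimates).

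There is, however, one overclaim in your upper-bound step. You assert $\nsdg{\zeta}\le C_2\,\ncdg{\nu}$ and that each ingredient of $\nsdg{\zeta}$ reduces to ``the corresponding ingredient of $\ncdg{\nu}$''. This is false: the time-trace pieces $\|\zeta_n^\pm\|^2$, $\|\ujump{\zeta}_n\|^2$ and the streamline piece $\sum_{\kappa_n}\stp\|\partial_t\zeta\|_{\kappa_n}^2$ do \emph{not} land in $\ncdg{\nu}$. After inserting $\zeta=\stp\,\partial_t\nu$ and applying \eqref{anisotropic-trace-inv} or \eqref{anisotropic-H-1-kappa}, these terms are bounded by constant multiples of $\sum_{\kappa_n}\stp\|\partial_t\nu\|_{\kappa_n}^2$, which is precisely the extra streamline term in $\nsdg{\nu}$ and is not controlled by $\ncdg{\nu}$ (the dG norm \eqref{DG_norm} contains no $L_2$-volume contribution that could absorb it). The correct bound, and the only one you actually need for the argument, is $\nsdg{\zeta}\le C_2\,\nsdg{\nu}$, exactly as the paper proves. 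With this correction your proof goes through and coincides with the paper's.
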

\begin{proof}
For $\nu \in \stfes$, we select $\mu :=\nu+\alpha \nu_s$, with
$\nu_s|_{\kappa_n} := \stp\partial_t \nu$, $\kappa_n \in \stelem$, with
$0<\alpha\in\mathbb{R}$, at our disposal.
Then, \eqref{Inf-sup} follows if both the following:
\begin{equation}\label{upper}
\nsdg{\mu} \leq C^* \nsdg{\nu},
\end{equation}
and
\begin{equation}\label{lower}
\tilde{B}(\nu,\mu)\geq C_*\nsdg{\nu}^2,
\end{equation}
hold, with $C^*>0$ and $C_*>0$ constants independent of $h_\kappa$, $\tmesh$, $p_{\kappa_n}$, number of faces per element, and $\Lambda_s =C_*/C^* $.

To show~\eqref{upper}, we start by considering the jump terms at time nodes $\{t_n\}_{n=0}^{N_t}$. Employing \eqref{anisotropic-trace-inv}, we have
\begin{eqnarray}
 && \hspace*{2cm}  \frac{1}{2} \ltwo{(\nu^+_s)_0}{\Omega}^2+ \sum_{n=1}^{N_t-1} \frac{1}{2} \ltwo{\ujump{\nu_s}_n}{\Omega}^2+ \frac{1}{2} \ltwo{(\nu^-_s)_{N_t}}{\Omega}^2 \qquad  \\
 &\leq&\sum_{\kappa_n \in \stelem} \stp^2 \sum_{\stF^\perp \subset \partial \kappa_n }  \ltwo{\partial_t \nu}{\stF^\perp}^2  \leq  \sum_{\kappa_n \in \stelem} 2 \stinvF  \frac{ \stp p_{\kappa_n}^2}{\tmesh}  \Big( \stp \ltwo{\partial_t \nu}{\kappa_n}^2 \Big)  \leq C_1 \nsdg{\nu}^2  \nonumber.
\end{eqnarray}
Using \eqref{anisotropic-H-1-kappa} with $\mathcal{G}=\kappa_n$, the second term on the right-hand side of \eqref{SDG-norm} is estimated by
\begin{eqnarray}
\sum_{\kappa_n\in \stelem} \stp \ltwo{\partial_t \nu_s}{\kappa_n}^2 \leq \sum_{\kappa_n\in \stelem} \stinvb  \frac{\stp^2 p_{\kappa_n}^4}{\tmesh^2}  \Big( \stp \ltwo{\partial_t \nu}{\kappa_n}^2 \Big)  \leq C_2 \nsdg{\nu}^2  .
\end{eqnarray}
Next, for the first term on the right-hand side of \eqref{SDG-norm}, employing \eqref{anisotropic-H-1-kappa} with $\mathcal{G}=\kappa_n$, the uniform ellipticity condition \eqref{uniform ellipticity}, together with Fubini's theorem, we have
\begin{eqnarray}
 \sum_{\kappa_n\in \stelem}
 \ltwo{\sqrt{\bf{a}}\nabla \nu_s}{\kappa_n}^2  &\leq &
 \sum_{\kappa_n\in \stelem} \bar{\bold{a}}_{\kappa_n} \stp^2  \ltwo{\partial_t (\nabla \nu)}{\kappa_n}^2 \leq \sum_{\kappa_n\in \stelem} \bar{\bold{a}}_{\kappa_n}  \stinvb  \frac{\stp^2 p_{\kappa_n}^4}{\tmesh^2}   \ltwo{ \nabla \nu}{\kappa_n}^2  \nonumber \\
&\leq&   \sum_{\kappa_n\in \stelem}  \stinvb \frac{ \bar{\bold{a}}_{\kappa_n}}{\theta} \frac{ \stp^2 p_{\kappa_n}^4}{\tmesh^2}   \ltwo{ \sqrt{\bf{a}}\nabla \nu}{\kappa_n}^2
  \leq C_3 \nsdg{\nu}^2.
 \label{C3_diffusion}
\end{eqnarray}
Finally, employing \eqref{anisotropic-H-1-kappa} with $\mathcal{G}=\stF^\parallel$, we have
\begin{eqnarray}
\nonumber
\int_J\int_{\Gamma}  \sigma | \jump{\nu_s} |^2 \ud{s}  \ud{t}
&\leq& \!\! \sum_{\stF^\parallel \subset J\times  \Gamma}\!\!   \sigma   \stinvb\frac{ \stp^2
}{\tmesh^2}
\big({\displaystyle  \max_{ \substack{\tilde{\kappa}_n \in\{\kappa_n ,\kappa^\prime_n\} \\
\stF^\parallel \subset  \partial\kappa_n \cap\partial \kappa^\prime_n  } }\{ p_{\tilde{\kappa}_n}\} }\big)^4
\ltwo{ \jump{ \nu}}{\stF^\parallel}^2  \leq  C_4 \nsdg{\nu}^2.
\label{C4_diffusion}
\end{eqnarray}
 Combining the above, we have
$
 \nsdg{\nu_s}\leq \hat{C}\nsdg{\nu}
 $, where $\hat{C}=\sqrt{\sum_{i=1}^4C_i}$, or
\begin{equation} \label{first_part}
\nsdg{\mu}\leq \nsdg{\nu}+\alpha\nsdg{\nu_s} \leq (1+ \alpha\hat{C})\nsdg{\nu} \equiv C^*(\alpha)\nsdg{\nu}.
\end{equation}

For (\ref{lower}), we start by noting that
$\tilde{B}(\nu,\mu)= \tilde{B}(\nu,\nu)+\alpha \tilde{B}(\nu,\nu_s)$;
we observe
\[
\tilde{B}(\nu,\nu_s) = \sum_{n=1}^{N_t}\Big(\!\!\sum_{\kappa_n \in \stelem}\!\!\! \stp \ltwo{ \partial_t {v}}{\kappa_n}^2+\int_{I_n}  \!\! \tilde{a}(\nu,\nu_s)   \ud  t\Big) + \sum_{n=2}^{N_t}(\ujump{\nu}_{n-1},(\nu_s)^+_{n-1})+(\nu^+_{0},(\nu_s)^+_{0}).
\]
Further, using~\eqref{anisotropic-trace-inv} and the arithmetic mean inequality, we have
\begin{eqnarray}
\sum_{n=2}^{N_t}(\ujump{\nu}_{n-1},(\nu_s)^+_{n-1})+(\nu^+_{0},(\nu_s)^+_{0})
\leq
\sum_{\kappa_n\in \stelem} \ltwo{\ujump{\nu}}{\stF^\perp \subset\partial \kappa_n }
 \Big( \stp  \sum_{\stF^\perp \subset \partial \kappa_n }  \ltwo{\partial_t \nu}{\stF^\perp} \Big)  \nonumber \\
\leq   \sum_{\kappa_n\in \stelem} \ltwo{\ujump{\nu}}{\stF^\perp \subset\partial \kappa_n }
 (  2 \sqrt{\stinvF  \frac{\stp p_{\kappa_n}^2}{\tmesh} } )(   \stp^{1/2} \ltwo{\partial_t \nu}{\k_n} )    \nno \\
\label{relation1}
  \leq   \sum_{\kappa_n\in \stelem}
\frac{\stp}{4}\ltwo{\partial_t \nu}{\kappa_n}
^2 + 4\stinvF
\Big(\ltwo{\nu^+_0}{}^2+
\ltwo{\ujump{\nu}_n}{}^2+
 \ltwo{\nu^-_{N_t}}{}^2 \Big),
\end{eqnarray}
where, with slight abuse of notation, we have extended the definition of the time jump $\ujump{\nu}$ to time boundary faces.
Next, from \eqref{cont}, together
with \eqref{C3_diffusion} and \eqref{C4_diffusion}, we get
\begin{eqnarray}\label{relation2}
&&\sum_{n=1}^{N_t} \int_{I_n}   \tilde{a}(\nu,\nu_s)   \ud  t \leq \sum_{n=1}^{N_t} \int_{I_n}    C_{\rm d}^{\rm cont} \ndg{\nu}_{\rm d} \ndg{\nu_s}_{\rm d} \ud  t  \\
&\leq& \frac{(C_{\rm d}^{\rm cont})^2}{2}\int_{J}\ndg{\nu}_{\rm d}^2 \ud t + \frac{1}{2} \int_{J}\ndg{\nu_s}_{\rm d}^2 \ud t     \leq \frac{(C_{\rm d}^{\rm cont})^2 +C_3+C_4}{2} \int_{J}\ndg{\nu}_{\rm d}^2 \ud t. \nonumber
\end{eqnarray}
Combining \eqref{bilinear-DG-norm} with \eqref{relation1} and \eqref{relation2}, we arrive to
\begin{eqnarray*} \label{infsupcoercive}
\tilde{B}(\nu, \mu) &=&\tilde{B}(\nu, \nu) + \alpha \tilde{B}(\nu, \nu_s) \nonumber \\
&\geq &  \Big( \frac{1}{2}-4\alpha \stinvF  \Big)  \Big( \ltwo{\nu^+_0}{}^2+ \sum_{n=1}^{N_t-1}  \ltwo{\ujump{\nu}_n}{}^2+ \ltwo{\nu^-_{N_t}}{}^2  \Big)  \nonumber \\
&+&  \Big( C_{\rm d}^{\rm coer}-\alpha  \frac{(C_{\rm d}^{\rm cont})^2 +C_3+C_4}{2} \Big) \int_{J}\ndg{\nu}_{\rm d}^2 \ud t + \kern-.2cm \sum_{\kappa_n\in \stelem}\kern-.2cm\alpha\Big( \stp- \frac{\stp}{4}   \Big)\ltwo{\partial_t \nu}{\kappa_n}^2.
\end{eqnarray*}
The coefficients in front of the norms arising on the right hand side of the above bound are all positive if
$
\alpha < \min \{1/( 8  \stinvF) , 2 C_{\rm d}^{\rm coer}/((C_{\rm d}^{\rm cont})^2+C_3+C_4)\},$ with the latter independent of the discretization parameters and the number of faces per element.
\end{proof}

The above result shows that the space-time dG method based on the reduced \emph{total-degree-$p$} space-time basis is well posed. It extends the stability proof from \cite{cangiani2015hp} to space-time elements with arbitrarily large aspect ratio between the time-step $\tmesh$ and local mesh-size $h_\k$ for parabolic problems.
Moreover, the above inf-sup condition holds \emph{without any  assumptions on the number of faces per spatial mesh}, too.  Therefore, the scheme is shown to be stable for extremely general, possibly anisotropic, space-time meshes.

The above inf-sup condition will be instrumental in the proof of the a priori error bounds below, as the total-degree-$p$ space-time basis does not allow for classical space-time tensor-product arguments \cite{thomee1984galerkin} to be employed.
\begin{remark}
Crucially, the stability constant in  \eqref{Inf-sup}  is independent of both the temporal mesh size and polynomial degree. However, the respective continuity constant for the bilinear form would scale proportionally to $\tau_n^{-1}$. Nonetheless, this is of \emph{no} consequence in the a priori error analysis presented below.
\end{remark}


\section{A priori error analysis}\label{Analysis}


\subsection{Polynomial approximation}\label{Approximation}

In view of using known approximation results, we shall require a shape-regularity assumption for the space-time elements.

\begin{assumption}\label{space-time-shape regular} We assume the existence of a constant $c_{reg}>0$ such that
\[
c_{reg}^{-1}\le h_{\kappa}/\tmesh\le c_{reg},
\] uniformly for all $\kappa_n\in\mathcal{U}\times\mathcal{T}$, i.e., the space-time elements are also shape-regular.
\end{assumption}

Following \cite{cangiani2013hp,cangiani2015hp}, we assume the existence of certain spatial mesh coverings.
\begin{definition}\label{meshes2}
A \emph{covering} $\mathcal{T}_{\sharp} = \{ \mathcal{K} \}$ related to the polytopic mesh $\mathcal{T}$ is a
set of shape-regular $d$--simplices or hypercubes  $\mathcal{K}$, such that for each $\el$, there exists a $\mathcal{K}\in\mathcal{T}_{\sharp}$,
with $\kappa\subset\mathcal{K}$. We refer to Figure \ref{spatial_temporal_covering}(a) for an illustration.
Given $\mathcal{T}_{\sharp}$, we denote by $\Omega_{\sharp}$ the \emph{covering domain} given by
$\Omega_{\sharp}:=\left(\cup_{\mathcal{K}\in\mathcal{T}_{\sharp}}\bar{\mathcal{K}}\right)^{\circ}$,   with $D^{\circ}$ denoting the interior of a set  $D \subset {\mathbb R}^d$.
\end{definition}

\begin{assumption}\label{assumption_overlap}
There exists a covering $\mathcal{T}_{\sharp}$ of $\mathcal{T}$ and a positive constant ${\cal O}_{\Omega}$, independent of the mesh parameters, such that the subdivision $\mathcal{T}$ satisfies
$
\max_{\kappa \in {\cal T}} {\cal O}_\k \leq {\cal O}_{\Omega},
$
where, for each $\el$,
$$
{\cal O}_\k :=  \mbox{\rm card}
      \left\{ \kappa^\prime \in {\cal T} :
              \kappa^\prime \cap \mathcal{K} \neq \emptyset, ~\mathcal{K}\in\mathcal{T}_{\sharp} ~\mbox{ such that } ~\kappa\subset\mathcal{K} \right\} . \nno
$$
\end{assumption}
As a consequence, we deduce that
$
\diam(\mathcal{K})\le C_{\diam} h_{\kappa},
$
for each pair $\el$, $\mathcal{K}\in\mathcal{T}_{\sharp}$, with $\kappa\subset\mathcal{K}$, for a constant $C_{\diam}>0$, uniformly with respect to the mesh size.

\begin{figure}[t]
\begin{center}
\begin{tabular}{cc}
\hspace{-0 cm} \includegraphics[scale=0.23]{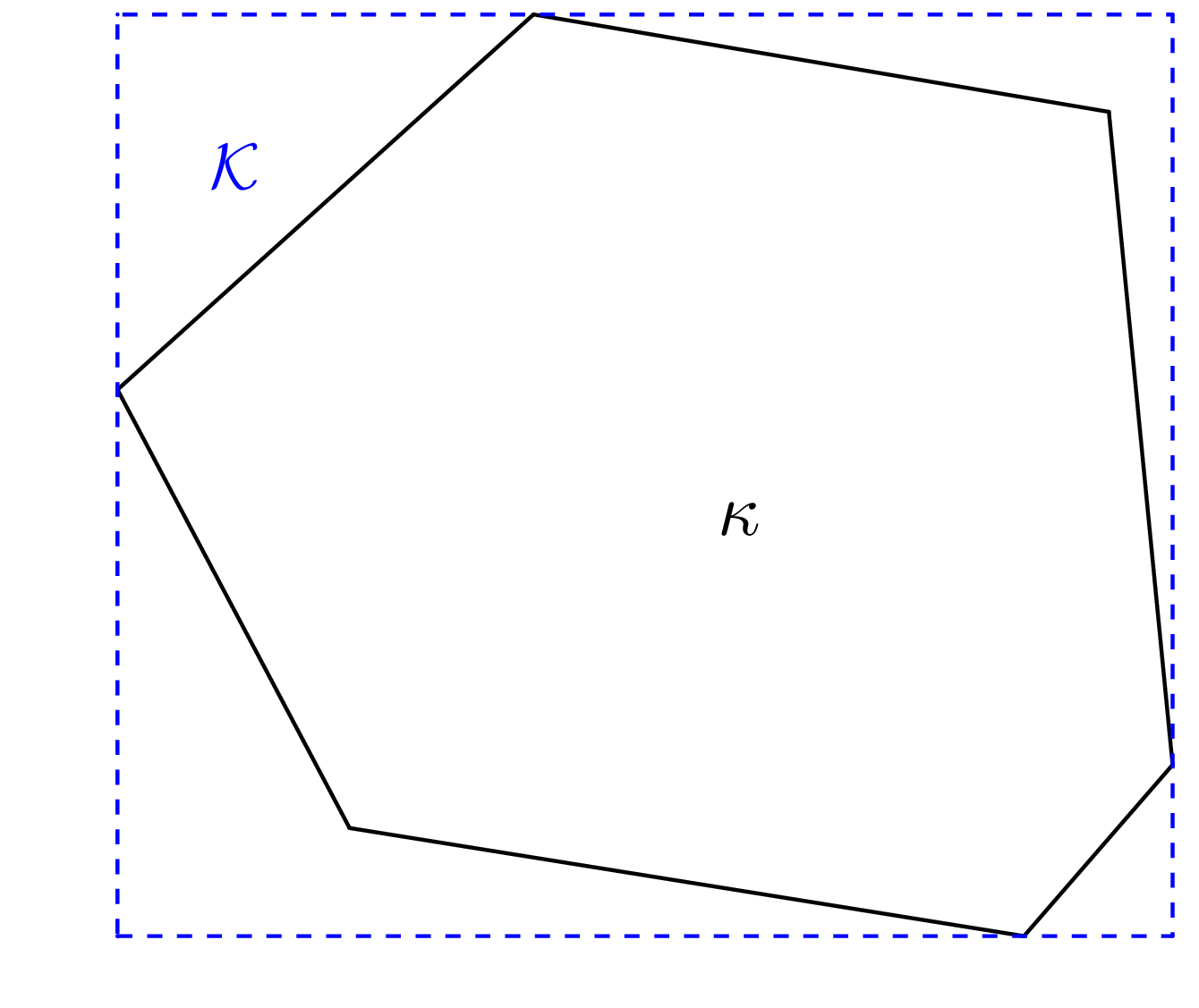} &
\hspace{-0 cm} \includegraphics[scale=0.24]{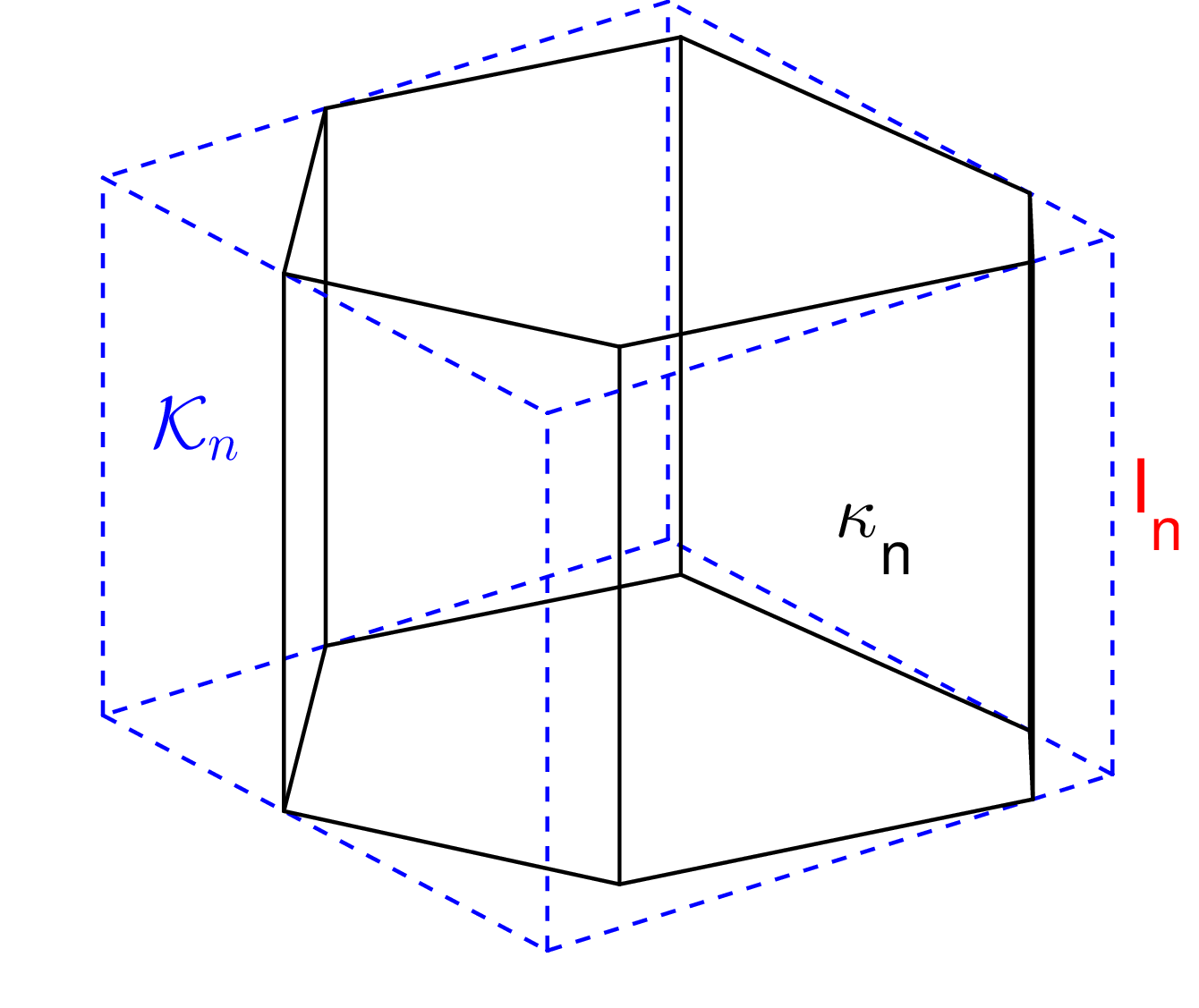}\\
(a) & (b)
\end{tabular}
\end{center}
\caption{(a). Polygonal spatial element $\kappa$ and covering ${\cal K}$; (b) space-time element $\kappa_n=I_n\times\kappa$ and covering ${\cal K}_n:=I_n\times{\cal K}$.} \label{spatial_temporal_covering}
\vspace{-.3cm}
\end{figure}

\begin{theorem}[\cite{stein}]\label{thm-extension}
Let $\Omega$ be a domain with a Lipschitz boundary. Then, there exists a linear extension
operator $\frak{E}:H^s(\Omega) \rightarrow H^s({\mathbb R}^d)$, $s \in {\mathbb N}_0$, such that
$\frak{E}v|_{\Omega}=v$ and
$
\| \frak{E} v \|_{H^s({\mathbb R}^d)} \leq C \| v \|_{H^s(\Omega)},
$
with $C>0$ constant depending only on $s$ and $\Omega$.
\end{theorem}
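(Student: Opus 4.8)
This statement is the classical Stein extension theorem, so in the paper it is merely quoted; the route I would take to prove it is the one from Stein's monograph, sketched here. The plan is to first \emph{localize} near $\partial\Omega$ and then to treat a single model boundary patch by an \emph{integral} operator rather than a reflection, because the mere Lipschitz regularity of the boundary rules out the classical higher-order (Hestenes--Babich) reflection argument: that argument needs a $C^s$ change of variables to flatten the boundary, and a Lipschitz flattening does not preserve $H^s$ for $s\ge 2$.

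First I would cover $\bar\Omega$ by finitely many open sets $U_0,U_1,\dots,U_M$ with $\bar U_0\subset\Omega$ and, for each $j\ge 1$, after a rigid rotation, $\Omega\cap U_j=\{(x',x_d): x_d>\varphi_j(x')\}\cap U_j$ for some Lipschitz $\varphi_j$, and choose a smooth partition of unity $\{\chi_j\}$ subordinate to $\{U_j\}$. The piece $\chi_0 u$ is compactly supported in $\Omega$ and extends trivially by zero, so it suffices to extend each $\chi_j u$; undoing the rotations, this reduces the whole problem to constructing a bounded operator $H^s\to H^s$ for the model \emph{special Lipschitz domain} $\Omega_\varphi:=\{(x',x_d): x_d>\varphi(x')\}$, with bounds depending only on $s$ and on the Lipschitz constant of $\varphi$, and with the same operator working for every $s$ simultaneously. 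The global extension is then $\mathfrak{E}u:=\chi_0 u+\sum_{j\ge 1}\mathfrak{E}_j(\chi_j u)$, whose boundedness follows from that of the pieces.

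For the model domain the key tool is Stein's \emph{regularized distance}: a function $\Delta\in C^\infty(\mathbb{R}^d\setminus\overline{\Omega_\varphi})$ with $c_1\,\dist(x,\partial\Omega_\varphi)\le\Delta(x)\le c_2\,\dist(x,\partial\Omega_\varphi)$ and $|D^\alpha\Delta(x)|\le C_\alpha\,\dist(x,\partial\Omega_\varphi)^{1-|\alpha|}$ for every multi-index $\alpha$, built from a Whitney decomposition of the complement and a careful summation. Put $\delta^*(x):=M\Delta(x)$ with $M$ large enough (depending on the Lipschitz constant of $\varphi$) that $(x',x_d+t\,\delta^*(x))\in\Omega_\varphi$ whenever $x\notin\overline{\Omega_\varphi}$ and $t\ge 1$, and fix once and for all a continuous $\psi$ on $[1,\infty)$ of rapid decay with $\int_1^\infty\psi(t)\ud t=1$ and $\int_1^\infty t^k\psi(t)\ud t=0$ for all $k\ge 1$ (existence by a Mellin-transform argument). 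Then I would set
\[
(\mathfrak{E}_\varphi u)(x):=
\begin{cases}
u(x), & x\in\Omega_\varphi,\\[2pt]
\displaystyle\int_1^\infty u\big(x',x_d+t\,\delta^*(x)\big)\,\psi(t)\,\ud t, & x\notin\overline{\Omega_\varphi},
\end{cases}
\]
and verify three things: continuity of $\mathfrak{E}_\varphi u$ across $\partial\Omega_\varphi$ (from $\int\psi=1$); matching of all derivatives up to order $s-1$ across $\partial\Omega_\varphi$ (the vanishing moments of $\psi$ kill the lower-order Taylor terms); and the bound $\|\mathfrak{E}_\varphi u\|_{H^s(\mathbb{R}^d)}\le C(s,\varphi)\|u\|_{H^s(\Omega_\varphi)}$, obtained by differentiating under the integral — each derivative landing on $\delta^*$ costs a factor $\dist^{-1}$ controlled by the estimates on $\Delta$ — followed by the change of variables $y=(x',x_d+t\,\delta^*(x))$, Minkowski's integral inequality, and one-dimensional Hardy-type estimates. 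The resulting constant depends only on $s$ and on $\Omega$ (through the Lipschitz data of the $\varphi_j$ and the partition of unity), and the construction is visibly independent of $s$, yielding a single total extension operator as claimed.

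The main obstacle, as usual, is the construction and the estimates for the regularized distance $\Delta$, together with the ensuing check that the singular factors $\dist(x,\partial\Omega_\varphi)^{-|\alpha|}$ produced by differentiating $\mathfrak{E}_\varphi u$ near the boundary are absorbed by the averaging in the $t$-variable (this is exactly where the Lipschitz geometry is used, and where a finite reflection would fail). The localization and the assembly of the global operator are routine by comparison.
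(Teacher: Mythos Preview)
Your reading is correct: the paper does not prove this theorem at all but simply cites Stein, so there is no ``paper's own proof'' to compare against. Your sketch is a faithful outline of Stein's construction (localization to special Lipschitz domains, regularized distance, and the moment-vanishing weight $\psi$), and nothing in it is wrong or missing at the level of a sketch.
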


Moreover, we shall also denote by $\frak{E}v$ the (trivial) space-time extension $\frak{E}v:L_2(J;H^s(\Omega))\to L_2(J;H^s(\mathbb{R}^d))$ defined as the spatial extension above, for every $t\in J$.

\begin{lemma}\label{approx_lemma}
Let $\kappa_n \in \stelem$, $\stF \subset \partial\kappa_n$ a face, and $\mathcal{K}\in\mathcal{T}_{\sharp}$ as in Definition~\ref{meshes2} and let ${\cal K}_n=I_n\times {\cal K}$ (see Figure \ref{spatial_temporal_covering}(b) for an illustration).
Let $v\in L_2(J \times \Omega)$, such that $\frak{E}v|_{\mathcal{K}_n}\in H^{l_{\kappa_n}}(\mathcal{K}_n)$, for some $l_{\kappa_n}\ge 0$. Suppose also that
Assumptions \ref{space-time-shape regular} and \ref{assumption_overlap}, hold. Then, there exists
$\tilde{\Pi}v|_{\kappa_n} \in \mathcal{P}_{p_{\kappa_n}}(\kappa_n)$, such that
\begin{equation}\label{approxH_k}
\norm{v - \tilde{\Pi} v}{H^q(\kappa_n)}
\le C \frac{h_{\kappa_n}^{s_{\kappa_n}-q}}{p_{\kappa_n}^{l§_{\kappa_n}-q}}\norm{\frak{E}v}{H^{l_{\kappa_n}}(\mathcal{K}_n)},\quad l_{\kappa_n}\ge 0,
\end{equation}
for $0\le q\le l_{\kappa_n}$,
\begin{equation}\label{approx-inf_k}
\norm{v - \tilde{\Pi} v}{L_2(\partial {\k_n} \cap \stF^\perp)}
\le C  \frac{h_{\kappa_n}^{s_{\kappa_n}-1/2}}{ p_{\kappa_n}^{l_{\kappa_n}-1/2}}
\norm{\frak{E} v}{H^{l_{\k_n}}(\mathcal{K}_n)}, \quad l_{\kappa_n}>1/2,
\end{equation}
and
\begin{equation}\label{approx-inf_k2}
\norm{v - \tilde{\Pi} v}{L_2(\partial {\k_n}\cap \stF^\parallel)}
\le C \frac{h_{\kappa_n}^{s_{\kappa_n}-1/2}}{p_{\kappa_n}^{l_{\kappa_n}-1/2}}
\norm{\frak{E} v}{H^{l_{\k_n}}(\mathcal{K}_n)},  \quad l_{\kappa_n}>1/2,
\end{equation}
with $s_{\kappa_n}=\min\{p_{\kappa_n}+1, l_{\kappa_n}\}$, and $C>0$ constant,
depending on the shape-regularity of ${\mathcal{K}_n}$, but is
independent of $v$, $h_{\kappa_n}$, $p_{\kappa_n}$ and number of faces per element.
\end{lemma}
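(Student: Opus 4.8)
The plan is to lift the classical $hp$--version approximation theory from the shape-regular covering prism $\mathcal{K}_n=I_n\times\mathcal{K}$ onto the physical-frame element $\kappa_n$ by a mere restriction, and to deal with the time-parallel element faces — which are \emph{not} faces of $\mathcal{K}_n$ — by means of the auxiliary simplices of Assumption~\ref{A1}. First I would record that $\mathcal{K}_n$ is itself shape-regular: $\mathcal{K}$ is a shape-regular simplex or box (Definition~\ref{meshes2}) with $\diam\mathcal{K}\le C_{\diam}h_\kappa$ (consequence of Assumption~\ref{assumption_overlap}), while $h_\kappa\simeq\tmesh$ by Assumption~\ref{space-time-shape regular}, so that $\diam\mathcal{K}_n\simeq h_{\kappa_n}$ with aspect ratio controlled only by $c_{reg}$ and $C_{\diam}$. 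On such an element the standard $hp$--approximation estimate for \emph{total-degree} polynomial spaces applies (see \cite{babuvska1987optimal,Babuska-Suri:RAIRO:1987,schwab}, and the polytopic analyses \cite{cangiani2013hp,cangiani2015hp}): combined with the Stein extension of Theorem~\ref{thm-extension}, there is $\Pi v\in\mathcal{P}_{p_{\kappa_n}}(\mathcal{K}_n)$ with
\[
\|\frak{E}v-\Pi v\|_{H^q(\mathcal{K}_n)}\le C\,\frac{h_{\kappa_n}^{s_{\kappa_n}-q}}{p_{\kappa_n}^{l_{\kappa_n}-q}}\,\|\frak{E}v\|_{H^{l_{\kappa_n}}(\mathcal{K}_n)},\qquad 0\le q\le l_{\kappa_n},
\]
together with the associated $L_2$ trace estimate on every face of $\mathcal{K}_n$, with $s_{\kappa_n}=\min\{p_{\kappa_n}+1,l_{\kappa_n}\}$ and $C$ depending only on the shape-regularity of $\mathcal{K}_n$. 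I would then set $\tilde\Pi v:=(\Pi v)|_{\kappa_n}$: since restricting a polynomial of total degree $p$ leaves its total degree $\le p$, we have $\tilde\Pi v|_{\kappa_n}\in\mathcal{P}_{p_{\kappa_n}}(\kappa_n)$, and $\frak{E}v|_{\kappa_n}=v|_{\kappa_n}$ because $\kappa_n\subset J\times\Omega$.

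Estimate~\eqref{approxH_k} is then immediate from $\|\cdot\|_{H^q(\kappa_n)}\le\|\cdot\|_{H^q(\mathcal{K}_n)}$ (as $\kappa_n\subset\mathcal{K}_n$). For the time-perpendicular trace \eqref{approx-inf_k}, one uses that $\stF^\perp$ equals one of $\{t_{n-1}\}\times\kappa$, $\{t_n\}\times\kappa$, hence is contained in the genuine face $\{t_\ast\}\times\mathcal{K}$ of $\mathcal{K}_n$; the $hp$ trace estimate on $\mathcal{K}_n$ followed by restriction to $\{t_\ast\}\times\kappa$ gives \eqref{approx-inf_k}, the hypothesis $l_{\kappa_n}>1/2$ ensuring the traces are meaningful.

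The one genuinely new ingredient — and the main obstacle — is the time-parallel trace \eqref{approx-inf_k2}, because $\stF^\parallel=I_n\times F$ with $F$ a face of the polytope $\kappa$ need not be a face of $\mathcal{K}_n$, and $|F|$ may be arbitrarily small. Here I would work on the prism $I_n\times s_\kappa^F$, with $s_\kappa^F$ the simplex of Assumption~\ref{A1} sharing $F$ with $\kappa$; it satisfies $I_n\times s_\kappa^F\subseteq\kappa_n\subseteq\mathcal{K}_n$ and has $I_n\times F$ as one of its faces. Applying, slice-wise in time via Fubini and using the tensor structure $\kappa_n=I_n\times\kappa$, the standard scaled (multiplicative) trace inequality on the \emph{spatial} simplex $s_\kappa^F$ to $w:=v-\tilde\Pi v$ yields
\[
\|w\|_{L_2(I_n\times F)}^2\le C\,\frac{|F|}{|s_\kappa^F|}\Big(\|w\|_{L_2(\mathcal{K}_n)}^2+h_{\kappa_n}\,\|w\|_{L_2(\mathcal{K}_n)}\,\|w\|_{H^1(\mathcal{K}_n)}\Big);
\]
the crucial point is that the potential blow-up of $|F|/|s_\kappa^F|$ is absorbed by \eqref{shape_relation1} together with Assumption~\ref{space-time-shape regular}, which give $|F|/|s_\kappa^F|\le C_s d/h_\kappa\le C/h_{\kappa_n}$.

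Inserting the $q=0$ and $q=1$ cases of \eqref{approxH_k} into the above, using $p_{\kappa_n}\ge1$ (so that $p_{\kappa_n}^{-2l_{\kappa_n}}\le p_{\kappa_n}^{-(2l_{\kappa_n}-1)}$) and taking square roots delivers \eqref{approx-inf_k2} with the asserted exponents $s_{\kappa_n}-1/2$ and $l_{\kappa_n}-1/2$. The borderline range $1/2<l_{\kappa_n}<1$, in which \eqref{approxH_k} with $q=1$ is not available, is handled analogously by replacing the multiplicative trace inequality and the $H^1$ term with their fractional counterparts on $s_\kappa^F$, combined with Sobolev interpolation between $L_2(\mathcal{K}_n)$ and $H^{l_{\kappa_n}}(\mathcal{K}_n)$. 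Throughout, all constants inherit only the shape-regularity of $\mathcal{K}_n$ and the constants $C_s,c_{reg},C_{\diam},\mathcal{O}_\Omega$, and in particular are independent of $v$, $h_{\kappa_n}$, $p_{\kappa_n}$ and the number of faces of $\kappa$.
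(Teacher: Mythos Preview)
Your strategy coincides with the paper's: construct $\tilde\Pi$ on the shape-regular covering prism $\mathcal{K}_n$ and restrict for \eqref{approxH_k}; use the $hp$ trace estimate on the genuine faces of $\mathcal{K}_n$ for \eqref{approx-inf_k}; and for \eqref{approx-inf_k2} pass to the auxiliary simplices $s_\kappa^F$ of Assumption~\ref{A1}, apply a trace inequality there, absorb the geometric factor via \eqref{shape_relation1}, and insert \eqref{approxH_k} with $q=0,1$. This is exactly the route the paper takes.

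One point, however, deserves care. In the paper, the estimate \eqref{approx-inf_k2} is understood as a bound over the \emph{entire} parallel boundary $I_n\times\partial\kappa$ (the proof opens with $\|v-\tilde\Pi v\|_{L_2(\partial\kappa_n\cap\stF^\parallel)}^2=\sum_{\stF^\parallel\subset\partial\kappa_n}\|v-\tilde\Pi v\|_{L_2(\stF^\parallel)}^2$), and the asserted independence of the number of faces is not automatic. It is secured by keeping the trace-inequality right-hand side on the \emph{mutually disjoint} prisms $I_n\times s_\kappa^F$, summing over $F$, and only then invoking $\sum_F\|\cdot\|_{L_2(I_n;H^m(s_\kappa^F))}^2\le\|\cdot\|_{L_2(I_n;H^m(\kappa))}^2$ for $m=0,1$, which is precisely what \eqref{shape_relation2} provides. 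Your displayed inequality enlarges each face contribution to $\mathcal{K}_n$-norms \emph{before} summing, which would pick up an unwanted factor equal to the number of faces; the fix is simply to postpone that enlargement until after the sum. A second, minor difference: the paper writes the simplex trace inequality in the additive form with weights $p_{\kappa_n}/h_{\kappa_n}$ and $h_{\kappa_n}/p_{\kappa_n}$ on the $L_2$ and $H^1$ pieces (a Young-with-parameter variant), so that the sharp $p$-exponent $l_{\kappa_n}-1/2$ falls out directly, whereas you use the multiplicative form and then $p_{\kappa_n}^{-2l_{\kappa_n}}\le p_{\kappa_n}^{-(2l_{\kappa_n}-1)}$; both give the same final rate.
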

\begin{proof}
The bound \eqref{approxH_k} can be proved in completely analogous fashion to the bounds appearing in \cite{cangiani2013hp,schotzau2013hp}. The proof of \eqref{approx-inf_k} also follows using an anisotropic version of the classical trace inequality (see, e.g., \cite{thesis}) and \eqref{approxH_k} for $q=0,1$. We give detailed  proof for \eqref{approx-inf_k2}. By employing Assumption \ref{A1}, relation \eqref{shape_relation1}, \eqref{shape_relation2}, the trace inequality over simplices, arithmetic mean inequality, and \eqref{approxH_k}, we have
\begin{eqnarray}
\qquad&& \norm{v - \tilde{\Pi} v}{L_2(\partial {\k_n} \cap \stF^\parallel)}^2
=  \sum_{\stF^\parallel \subset \partial \k_n} \norm{v - \tilde{\Pi} v}{L_2(\stF^\parallel)}^2  \nno \\
&\le&  C_{tr}
\sum_{\stF^\parallel \subset \partial \k_n} \Big(
\frac{p_{\k_n}}{h_{\k_n}} \norm{v - \tilde{\Pi} v}{L_2(I_n; L_2(s^F_\k))}^2+
\frac{h_{\k_n}}{p_{\k_n}}\norm{v - \tilde{\Pi} v}{L_2(I_n; H^1(s^F_\k))}^2   \Big)\nno \\
&\le& C \frac{h_{\kappa_n}^{2s_{\kappa_n}-1}}{p_{\kappa_n}^{2l_{\kappa_n}-1}}
\norm{\frak{E} v}{H^{l_{\k_n}}(\mathcal{K}_n)}^2,  \quad l_{\kappa_n}>1/2,\nno
\end{eqnarray}
where the  constant $C$ depending the on constant from trace inequality and $C_s$ in \eqref{shape_relation1}, but independent of the discretization parameters and number of faces per element, see \cite[Lemma 1.49]{DiPietroErn}.
\end{proof}
\begin{remark}
Assumption \ref{space-time-shape regular} is the result of the use of the projector $\tilde{\Pi}$ onto the space-time total degree $\mathcal{P}_p$ elemental basis in the analysis. Nonetheless, we emphasise that the dG scheme introduced in this work allows for the use of different type of space-time basis over general shaped space-time prismatic elements. For problems with singularities in time, bases with  anisotropic tensor product space-time basis could  be used instead, while elements with total degree $\mathcal{P}_p$ basis would be used in spatio-temporal regions where the solution is characterised by isotropic behaviour.
\end{remark}

\subsection{Error-analysis}\label{Error-analysis}
We first give an a priori error bound for the space-time dG scheme \eqref{adv_galerkin_dg} in the $\nsdg{\cdot}$--norm, before using this bound to prove a respective $L_2(L_2)$--norm a priori error bound.
\begin{theorem} \label{thm:apriori}
Let Assumptions~\ref{A1}, \ref{space-time-shape regular} and~\ref{assumption_overlap} hold, and let $u_h\in \stfes$ be the space-time dG approximation to the exact solution $u \in L_2(J ; H^1(\Omega))\cap H^1(J ; H^{-1}(\Omega)) $,
with the discontinuity-penalization parameter given by~\eqref{eq:IPdef},
and suppose that  $u|_{\k_n} \in  H^{l_{\k_n}}(\k_n)$, $l_{\k_n}\geq 1$, for each $\k_n \in \stelem$, such that
$\frak{E}u|_{\mathcal{K}_n}\in H^{l_{\k_n}}(\mathcal{K}_n)$.   Then,
the following error bound holds:
\begin{eqnarray}\label{Error_bounds}
 \nsdg{u-u_{h}}^2 \le C \hspace{-0.3cm} \sum_{\kappa_n \in \stelem} \frac{h_{\kappa_n}^{2s_{\kappa_n}}}{p_{\kappa_n}^{2l_{\kappa_n}}}
\big(  && {\cal G}_{\kappa_n}(h_{\k_n},p_{\k_n}) + {\cal D}_{\kappa_n}(h_{\k_n},p_{\k_n}) \big)\|\frak{E} u\|_{H^{l_{\k_n}}(\mathcal{K}_n)}^2,
\end{eqnarray}
where
$
 {\cal G}_{\kappa_n} (h_{\k_n},p_{\k_n})
=
\stp^{-1}+\stp {p_{\kappa_n}^{2}}{h_{\kappa_n}^{-2}}  + p_{\kappa_n} h_{\kappa_n}^{-1}  +\bar{\bold{a}}_{\kappa_n}{p_{\kappa_n}^{2}}{h_{\kappa_n}^{-2}}   +p_{\kappa_n} h_{\kappa_n}^{-1}
\max_{{\stF^\parallel \subset \partial\kappa_n  }}
\sigma ,
$
and
\begin{eqnarray}\label{incon-err}
{\cal D}_{\kappa_n}(h_{\k_n},p_{\k_n})
&=& \bar{\bold{a}}_{\kappa_n}^2
\big(p_{\kappa_n}^{3}h_{\k_n}^{-3}
\max_{{\stF^\parallel\subset \partial\kappa_n  }}\sigma^{-1}  +{p_{\kappa_n}^{4} h_{\kappa_n}^{-3}}
\max_{{\stF^\parallel\subset \partial\kappa_n  }}\sigma^{-1}
\big ),
\end{eqnarray}
with
$s_{\kappa}=\min\{p_{\kappa}+1,l_{\kappa}\}$ and $p_\k \geq1$.
Here, the positive constant C  is independent of the discretization parameters, number of faces per element and $u$.
\end{theorem}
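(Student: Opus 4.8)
The plan is to establish the $\nsdg{\cdot}$-norm error bound by combining the inf-sup condition of Theorem~\ref{infsup  theorem} with the approximation results of Lemma~\ref{approx_lemma} via a standard Strang-type argument, taking particular care of the inconsistency introduced by the projected bilinear form $\tilde{B}$. First I would split the error as $u - u_h = (u - \tilde{\Pi}u) + (\tilde{\Pi}u - u_h) =: \eta + \xi$, where $\tilde{\Pi}u \in \stfes$ is the elementwise projection from Lemma~\ref{approx_lemma}; since $\xi \in \stfes$, the inf-sup condition \eqref{Inf-sup} yields $\nsdg{\xi} \le \Lambda_s^{-1} \sup_{\mu \in \stfes\setminus\{0\}} \tilde{B}(\xi,\mu)/\nsdg{\mu}$. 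Next I would exploit Galerkin orthogonality in the form $\tilde{B}(u_h,\mu) = \tilde{\ell}(\mu)$ for $\mu\in\stfes$ (using $B(u_h,\mu)=\tilde{B}(u_h,\mu)$ and $\ell(\mu)=\tilde{\ell}(\mu)$ on $\stfes$), together with $\tilde{B}(u,\mu)=\tilde{\ell}(\mu)$, which holds because the exact solution satisfies the PDE and the jump/time-jump terms vanish for the continuous-in-time $u$; this is precisely why the inconsistent form $\tilde{a}$ with the $\vecL$-projection was introduced, so that $\tilde{B}(u,\mu)$ is well-defined and consistent for $u$ only of regularity $L_2(J;H^1)\cap H^1(J;H^{-1})$. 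Hence $\tilde{B}(\xi,\mu) = -\tilde{B}(\eta,\mu)$, and the problem reduces to bounding $|\tilde{B}(\eta,\mu)|$ by $\big(\sum_{\kappa_n} (\text{stuff}) \|\frak{E}u\|^2_{H^{l_{\kappa_n}}(\mathcal{K}_n)}\big)^{1/2}\nsdg{\mu}$.

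The core of the proof is then the termwise estimation of $\tilde{B}(\eta,\mu)$ over the definition \eqref{in-bilinear}. I would handle each group of terms separately: the $(\partial_t\eta,\mu)$ term is controlled by moving the time-derivative onto $\mu$ via Cauchy--Schwarz and absorbing $\|\partial_t\mu\|$ into the streamline-diffusion part of $\nsdg{\mu}$, picking up a weight $\stp^{-1}$ on $\|\eta\|^2$ (giving the $\stp^{-1}$ contribution in ${\cal G}_{\kappa_n}$) — alternatively integrating by parts in time and using the time-jump terms, which is cleaner; the volume diffusion term $\su\int_\kappa {\bf a}\nabla\eta\cdot\nabla\mu$ is bounded directly by $\ndg{\eta}_{\rm d}\ndg{\mu}_{\rm d}$; the consistency face terms involving $\mean{{\bf a}\vecL(\nabla\eta)}\cdot\jump{\mu}$ and $\mean{{\bf a}\vecL(\nabla\mu)}\cdot\jump{\eta}$ are the delicate ones. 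For the first of these I would use the trace inverse estimate \eqref{inv_est_gen} applied to $\vecL(\nabla\eta)$ together with $L_2$-stability of $\vecL$ and the approximation bound \eqref{approxH_k} with $q=1$, producing the $\bar{\bold{a}}_{\kappa_n}^2 p_{\kappa_n}^3 h_{\kappa_n}^{-3}\max\sigma^{-1}$ and the related $p^4 h^{-3}\max\sigma^{-1}$ terms that make up ${\cal D}_{\kappa_n}$; the $h^{-3}$ scaling arises from combining the $h_{\kappa_n}^{-1}$ of the inverse estimate with $h_{\kappa_n}^{-2}$ from differentiating $\eta$ once and taking the $L_2(\stF)$-trace (the extra $h^{-1/2}$ factors pairing up). The term with the penalty $\sigma\jump{\eta}\cdot\jump{\mu}$ is bounded by $\|\sqrt{\sigma}\jump{\eta}\|_\Gamma\,\|\sqrt{\sigma}\jump{\mu}\|_\Gamma$ using \eqref{approx-inf_k2}, yielding the $p_{\kappa_n}h_{\kappa_n}^{-1}\max\sigma$ contribution; and the time-jump and initial terms $\sum(\ujump{\eta}_{n-1},\mu^+_{n-1}) + (\eta^+_0,\mu^+_0)$ are estimated using the anisotropic trace inequality \eqref{anisotropic-trace-inv}-type bound on $\eta$ (via \eqref{approx-inf_k} on $\stF^\perp$) against the $\ncdg{\mu}$ jump contributions, giving the $p_{\kappa_n}h_{\kappa_n}^{-1}$ terms; the remaining $\stp p_{\kappa_n}^2 h_{\kappa_n}^{-2}$ and $\bar{\bold{a}}_{\kappa_n}p_{\kappa_n}^2 h_{\kappa_n}^{-2}$ terms in ${\cal G}_{\kappa_n}$ come from the streamline-diffusion contribution $\stp\|\partial_t\eta\|^2_{\kappa_n}$ to $\nsdg{\eta}^2$, bounded by \eqref{approxH_k} with $q=1$ after noting $\partial_t\eta$ is a first-order quantity. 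Finally, combining $\nsdg{u-u_h}\le \nsdg{\eta}+\nsdg{\xi}$ with all the above and inserting the approximation estimates \eqref{approxH_k}--\eqref{approx-inf_k2} with $s_{\kappa_n}=\min\{p_{\kappa_n}+1,l_{\kappa_n}\}$ delivers \eqref{Error_bounds}.

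The main obstacle I anticipate is the treatment of the inconsistency face term $\mean{{\bf a}\vecL(\nabla\mu)}\cdot\jump{\eta}$: here the gradient falls on the \emph{test function} $\mu\in\stfes$, so one cannot use approximation directly, and one must instead invoke the inverse estimate \eqref{inv_est_gen} on $\vecL(\nabla\mu)$ to transfer the face norm to an element norm $\|\nabla\mu\|_{L_2(I_n;L_2(s^F_\kappa))}$, which is then bounded by $\theta^{-1/2}\|\sqrt{\bf a}\nabla\mu\|\le\theta^{-1/2}\ndg{\mu}_{\rm d}$, while $\|\jump{\eta}\|_{\stF^\parallel}$ is handled by \eqref{approx-inf_k2}; keeping the dependence on $\bar{\bold{a}}_{\kappa_n}$, $\sigma$, and the number-of-faces-independence straight through this manoeuvre (using $\sa$ to split appropriately and $C_s$ from Assumption~\ref{A1} rather than any face-count constant) is where the bookkeeping is heaviest. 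A secondary delicate point is verifying that $\tilde{B}(u,\mu)=\tilde{\ell}(\mu)$ genuinely holds for $u$ with only the stated minimal regularity — this requires that $\vecL(\nabla u)$ makes sense (it does, as $\nabla u\in L_2(J;L_2(\Omega))^d$) and that integration by parts in time is justified using $u\in C(\bar J;L_2(\Omega))$ so the time-jumps of $u$ vanish and $(u^+_0,\mu^+_0)=(u_0,\mu^+_0)$ matches $\tilde\ell$.
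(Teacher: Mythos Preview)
Your overall Strang-type strategy (split $u-u_h=\eta+\xi$, apply the inf-sup condition to $\xi$, estimate $\tilde B(\eta,\mu)$ termwise) is exactly what the paper has in mind when it refers to \cite{cangiani2015hp}, and your treatment of the penalty jump term via \eqref{approx-inf_k2} is precisely the face-number-independent argument the paper highlights.

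However, there is a genuine error in your consistency step. You claim $\tilde B(u,\mu)=\tilde\ell(\mu)$ for the exact solution $u$ and test functions $\mu\in\stfes$, and you justify this by saying that this is the reason the projected form $\tilde a$ was introduced. This is incorrect: the form $\tilde B$ is \emph{inconsistent} with the exact solution, which is why it carries that name. The purpose of inserting $\vecL$ is not to restore consistency under low regularity, but to make the face terms \emph{bounded} in the dG norm (since $\vecL(\nabla v)$ is discrete and admits trace-inverse estimates, while $\nabla v$ for $v\in L_2(J;H^1(\Omega))$ need not have $L_2$ face traces). For $\mu\in\stfes$ one has $\vecL(\nabla_h\mu)=\nabla_h\mu$, so the second face term agrees with $a(\cdot,\cdot)$; but $\vecL(\nabla u)\neq\nabla u$ in general, and a direct computation gives
\[
\tilde B(u-u_h,\mu)=R(u,\mu):=\int_J\int_\Gamma\mean{{\bf a}(\nabla u-\vecL(\nabla u))}\cdot\jump{\mu}\,\ud s\,\ud t,
\]
so that $\tilde B(\xi,\mu)=-\tilde B(\eta,\mu)+R(u,\mu)$. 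The residual $R(u,\mu)$ must be bounded separately, and it is exactly this term that produces the contribution ${\cal D}_{\kappa_n}$ in \eqref{incon-err}; the paper states this explicitly in the Remark following the theorem. Your attribution of ${\cal D}_{\kappa_n}$ to the face term $\mean{{\bf a}\vecL(\nabla\eta)}\cdot\jump{\mu}$ inside $\tilde B(\eta,\mu)$ is therefore misplaced (that term can be absorbed into ${\cal G}_{\kappa_n}$ via the $\bar{\bold a}_{\kappa_n}p_{\kappa_n}^2h_{\kappa_n}^{-2}$ contribution using the inverse estimate and $L_2$-stability of $\vecL$). To bound $R(u,\mu)$ one writes $\nabla u-\vecL(\nabla u)=\nabla\eta-\vecL(\nabla\eta)$ (using $\vecL(\nabla\tilde\Pi u)=\nabla\tilde\Pi u$) and estimates the two pieces separately: $\|\nabla\eta\|_{\stF^\parallel}$ via the trace approximation \eqref{approx-inf_k2} applied to $\nabla u$, and $\|\vecL(\nabla\eta)\|_{\stF^\parallel}$ via the inverse estimate \eqref{inv_est_gen} plus stability of $\vecL$; this yields the two summands in ${\cal D}_{\kappa_n}$. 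With this correction the remainder of your outline is sound.
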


\begin{proof}
After noting that $\tmesh \le c_{reg} h_{\kappa}$ by assumption, a priori bound can be derived following similar ways in \cite{cangiani2015hp} where a priori bound for  general second order linear problems is presented. However, we detail here a different treatment of the trace terms to take advantages of the different mesh assumption used here. Let $\rho = u-\tilde{\Pi}u$, $\tilde{\Pi}$ is the projector defined in Lemma \ref{approx_lemma}. By employing relation \eqref{approx-inf_k2} in approximation Lemma \ref{approx_lemma}, we have
\begin{eqnarray}\label{trick}
&&\quad \int_J\int_{\Gamma}  \sigma | \jump{\rho} |^2 \ud{s}  \ud{t}=\!\! \sum_{\stF^\parallel \subset J\times  \Gamma}\!\!  \sigma  \ltwo{\jump{ \rho}}{\stF^\parallel}^2 \\  \nno \\
&\leq& 2\sum_{{\k_n}\in \stelem} ( \max_{\stF^\parallel\subset \partial \k_n} \sigma)  \ltwo{ { \rho}}{L_2(\partial \k_n \cap \stF^\parallel)}^2
\leq C\sum_{{\k_n}\in \stelem} ( \max_{\stF^\parallel\subset \partial \k_n} \sigma)   \frac{h_{\kappa_n}^{2s_{\kappa_n}-1}}{p_{\kappa_n}^{2l_{\kappa_n}-1}} \|\frak{E} u\|_{H^{l_{\k_n}}(\mathcal{K}_n)}^2;  \nno
\end{eqnarray}
the constant $C>0$ is independent of number of faces per elements. Bounds for remaining trace and inconsistency terms can be derived in completely analogous fashion to the respective result in \cite{cangiani2015hp}.
\end{proof}

The above a priori bound holds \emph{without} any assumptions on the relative size of the spatial faces $F$, $F\subset \partial \kappa$, and number of faces  of a given spatial polytopic element $\k \in {\cal T}$, i.e., elements with arbitrarily small faces and/or arbitrary number of faces are permitted, as long as they satisfy Assumption \ref{A1}.

\begin{remark}
For later reference, we note that  ${\cal D}_{\kappa_n}(h_{\k_n},p_{\k_n})$ given in \eqref{incon-err}, estimates the inconsistency part of the error; we refer to \cite{cangiani2015hp} for details.
\end{remark}

\begin{corollary}\label{cor_conv_l2h1}
Assume the hypotheses of Theorem \ref{Error_bounds} and consider uniform elemental polynomial degrees $p_{\k_n} =p \geq1$. Assume also that $h=\max_{\k_n \in\stelem} h_{\k_n}$, $s_{\kappa_n}=s$ and
$s=\min\{p+1,l\}$, $l\geq1$. Then, we have the bound
\begin{equation*}
\ltwo{u-u_{h}}{L_2(J; H^1(\Omega, \mathcal{T}))} \leq C \frac{h^{s-1}}{p^{l-\frac{3}{2}}} \| u\|_{H^{l}(J \times \Omega)},
\end{equation*}
for $C>0$ constant, independent of $u$, $u_h$, and of the mesh parameters.
\end{corollary}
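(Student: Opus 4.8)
The plan is to deduce Corollary~\ref{cor_conv_l2h1} directly from Theorem~\ref{thm:apriori} by specializing to uniform polynomial degree $p_{\k_n}=p$ and tracking the powers of $h$ and $p$ carefully. First I would observe that, since $\ltwo{u-u_h}{L_2(J;H^1(\Omega,\mathcal{T}))}^2 \le \ncdg{u-u_h}^2 \le \nsdg{u-u_h}^2$ up to a constant depending only on the ellipticity constant $\theta$ (because the broken $H^1$-seminorm is controlled by $\ndg{\cdot}_{\rm d}$ via \eqref{uniform ellipticity}), it suffices to bound the right-hand side of \eqref{Error_bounds}. So the whole task is to simplify $\sum_{\kappa_n}\frac{h_{\kappa_n}^{2s_{\kappa_n}}}{p_{\kappa_n}^{2l_{\kappa_n}}}\big({\cal G}_{\kappa_n}+{\cal D}_{\kappa_n}\big)\|\frak{E}u\|_{H^{l_{\k_n}}(\mathcal{K}_n)}^2$ under the stated hypotheses.

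Next I would invoke Assumption~\ref{space-time-shape regular}, which gives $\tmesh \eqsim h_{\kappa}$, so that $\stp = \tmesh/\hat p_{\kappa_n}^2 \eqsim h_{\kappa_n} p^{-2}$; likewise $\stp^{-1}\eqsim h_{\kappa_n}^{-1}p^2$. Substituting $\tmesh\eqsim h_{\kappa_n}$ and the uniform $p$ into the definition of $\sigma$ in \eqref{eq:IPdef} yields $\sigma \eqsim p^2 h_{\kappa_n}^{-1}$ (absorbing the bounded quantity $\bar{\bold{a}}_{\kappa_n}$ into the constant), and hence $\sigma^{-1}\eqsim p^{-2}h_{\kappa_n}$. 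Plugging these into ${\cal G}_{\kappa_n}$ term by term: $\stp^{-1}\eqsim p^2 h^{-1}$, $\stp p^2 h^{-2}\eqsim h^{-1}$, $p h^{-1}$, $\bar{\bold{a}}_{\kappa_n}p^2 h^{-2}\eqsim p^2 h^{-2}$, and $p h^{-1}\max\sigma\eqsim p^3 h^{-2}$; the dominant term in ${\cal G}_{\kappa_n}$ is thus of order $p^3 h_{\kappa_n}^{-2}$. Similarly for ${\cal D}_{\kappa_n}$ in \eqref{incon-err}: $p^3 h^{-3}\max\sigma^{-1}\eqsim p^3 h^{-3}\cdot p^{-2}h = p\,h^{-2}$ and $p^4 h^{-3}\max\sigma^{-1}\eqsim p^4 h^{-3}\cdot p^{-2}h = p^2 h^{-2}$, so ${\cal D}_{\kappa_n}$ is of order $p^2 h_{\kappa_n}^{-2}$, dominated by the ${\cal G}$ contribution. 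Therefore ${\cal G}_{\kappa_n}+{\cal D}_{\kappa_n}\lesssim p^3 h_{\kappa_n}^{-2}$.

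Then the per-element factor becomes $\frac{h_{\kappa_n}^{2s}}{p^{2l}}\cdot p^3 h_{\kappa_n}^{-2} = \frac{h_{\kappa_n}^{2s-2}}{p^{2l-3}}$. Bounding $h_{\kappa_n}\le h$ for all elements (valid since $2s-2\ge 0$ because $s\ge 1$), summing over $\kappa_n$, using the finite-overlap Assumption~\ref{assumption_overlap} to control $\sum_{\kappa_n}\|\frak{E}u\|_{H^{l}(\mathcal{K}_n)}^2 \lesssim \mathcal{O}_\Omega \|\frak{E}u\|_{H^l(J\times\Omega_\sharp)}^2 \lesssim \|u\|_{H^l(J\times\Omega)}^2$ (the last step by the stability of the Stein extension, Theorem~\ref{thm-extension}, applied slicewise in time and by the uniform bound on $\diam(\mathcal{K})\le C_{\diam}h_\kappa$), gives $\nsdg{u-u_h}^2 \lesssim \frac{h^{2s-2}}{p^{2l-3}}\|u\|_{H^l(J\times\Omega)}^2$. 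Taking square roots produces exactly $\frac{h^{s-1}}{p^{l-3/2}}\|u\|_{H^l(J\times\Omega)}$, as claimed.

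The only mildly delicate point is keeping the bookkeeping honest: one must check that the $\max$ over faces $\stF^\parallel\subset\partial\kappa_n$ in the definition of $\sigma$ does not introduce extra $p$- or $h$-dependence beyond neighbouring elements, which it does not under uniform $p$ and Assumption~\ref{space-time-shape regular} since all neighbouring $h$'s and $\hat p$'s are comparable; and that $\bar{\bold{a}}_{\kappa_n}$ is uniformly bounded above (which follows from ${\bf a}\in L_\infty$) so it can safely be absorbed into the generic constant. Everything else is routine algebra with powers of $h$ and $p$, so I do not expect any genuine obstacle — the statement is essentially a clean corollary of Theorem~\ref{thm:apriori}.
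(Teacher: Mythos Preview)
Your argument is correct and is precisely the routine specialization of Theorem~\ref{thm:apriori} that the paper intends; the paper states the corollary without proof, and your bookkeeping of the powers of $h$ and $p$ in ${\cal G}_{\kappa_n}$ and ${\cal D}_{\kappa_n}$ under Assumption~\ref{space-time-shape regular} and uniform $p$ is exactly what is required. The only minor caveat is that $\ncdg{\cdot}$ controls the broken $H^1$-\emph{seminorm} rather than the full norm, but in this context (and given the subsequent $L_2(L_2)$ bound) that is the intended reading and does not affect the stated rate.
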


The above bound is, therefore, $h$-optimal and $p$-suboptimal by $p^{1/2}$.

Next, we derive an error bound in the $L_2(J; L_2(\Omega))$--norm using a duality argument. To this end, the backward adjoint problem of \eqref{Problem} is defined by
\begin{equation}\label{Adjoint Problem}
\begin{aligned} -\partial_t z  - \nabla \cdot ({\bf a} \nabla z) &=\phi  \quad \text{in } J \times  \Omega , \\
z|_{t=T} = g \quad \text{on }  \Omega, \quad &\text{and} \quad u = 0  \quad \text{on }  J \times \partial\Omega.
\end{aligned}
\end{equation}
Assume that $g\in H^1_0(\Omega)$ and $\phi \in L_2(J; L_2(\Omega))$. Then we have
\begin{equation} \label{regularity}
z \in  L_2(J; H^2(\Omega)) \cap L_\infty(J; H^1_0(\Omega)), \quad  \partial_t z \in L_2(J; L_2(\Omega)).
\end{equation}
We assume that $\Omega$ and ${\bold a}$ are such that the parabolic regularity estimate
\begin{eqnarray}\label{regularity estimate}
\linf{z}{J; H^1_0(\Omega)} + \ltwo{z}{L_2(J; H^2(\Omega))}+ &&\ltwo{z}{H^1(J; L_2(\Omega))}\leq \\ \nonumber
 && ~~~~~~~~C_r( \ltwo{\phi}{L_2(J; L_2(\Omega))} + \| g\|_{H^1_0(\Omega)}),
\end{eqnarray}
holds with the constant $C_r>0$ depending only on $\Omega$, $T$ and $\bold{a}$; cf.~\cite[p.360]{evans2010partial} for smooth domains, and the parabolic regularity results can be extended to convex  domains by using results in \cite[Chapter 3]{grisvard2011elliptic}.
\begin{assumption}\label{bounded-variation}
For any two $d$-dimensional spatial elements $\kappa$, $\kappa^\prime\in \mathcal{T}$ sharing the same $(d-1)-$face, we have:
\begin{equation}
\max(h_\k,h_{\k^\prime})\leq c_h\min(h_\k,h_{\k^\prime}), \quad\max(p_{\k_n},p_{\k_n^\prime})\leq c_p\min(p_{\k_n},p_{\k_n^\prime}),
\end{equation}
for $n=1,\dots,N_t$, $c_h>0$, $c_p>0$ constants, independent of discretization parameters.
\end{assumption}

For each given time interval $I_n$, we define orthogonal projection in time variable $\pi^t_{\bar{p}_n}$, with $\bar{p}_n: = \min_{\kappa_n\in\mathcal{U}\times \mathcal{T}} \lfloor {\frac{p_{\kappa_n}}{2}} \rfloor $, i.e., the integer part of the minimum elemental polynomial degree of the space-time slab $I_n\times \Omega$ divided by two. We collect all the $\bar{p}_n$ into a vector $\bar{p}:=(\bar{p}_1,\dots, \bar{p}_{N_t})$.

We now present the theorem under the assumption that $a\equiv a(\mbf{x})$, i.e., the diffusion coefficient depends only on the spatial variables.

\begin{theorem}[$a\equiv a(\mbf{x})$]\label{L2_L2 norm}
Consider the setting of  Theorem~{\ref{thm:apriori}}, and assume the parabolic regularity estimate \eqref{regularity estimate}  holds along with  Assumption \ref{bounded-variation}. Then, we have the bound
\begin{eqnarray*}
 \ltwo{u-u_{h}}{L_2(J; L_2(\Omega))}^2 &\le& Cr_{\max} \max_{\kappa_n\in\stelem}\!\! h_{\k_n}\!\!\!\!\sum_{\kappa_n \in \stelem}\! \frac{h_{\kappa_n}^{2s_{\kappa_n}}}{p_{\kappa_n}^{2l_{\kappa_n}}}
 \big(   {\cal G}_{\kappa_n}(h_{\k_n},p_{\k_n}) \\
 &&~~~~~~~~~~~~~~~~~~~~~~~~~ +{\cal D}_{\kappa_n}(h_{\k_n},p_{\k_n}) \big)\|\frak{E} u\|_{H^{l_{\k_n}}(\mathcal{K}_n)}^2,
\end{eqnarray*}
with  $r_{\max}:=(\max_{\kappa_n\in\stelem} p_{\kappa_n}/\bar{p}_{n})^2$, with the constant $C>0$, independent of $u$, $u_h$, of the discretization parameters and of number of faces per element.
\end{theorem}
\begin{proof}
Let $\pi^t_{q}$, for $q=(q_1,\dots,q_{N_t})$, denote the $L_2-$orthogonal projection onto (discontinuous, in general) polynomials of $q_n$ over each $I_n$ with respect to the time variable.  Also, we set $e:=u-u_h$ and $e_{\bar{p}}:= \pi^t_{\bar{p}}e$ the $L_2-$orthogonal projection of the error $e$ in the time variable, for brevity. We have, respectively,
\begin{equation}\label{relation 1}
\begin{split}
 \ltwo{e}{L_2(J; L_2(\Omega))}^2  =&\  \ltwo{e - e_{\bar{p}}}{L_2(J; L_2(\Omega))}^2  +  \ltwo{  e_{\bar{p}}}{L_2(J; L_2(\Omega))}^2 \\
\leq&\  C \sum_{\kappa_n \in \stelem} \frac{\tau_n^2}{\bar{p}_{n}^2} \ltwo{\partial_t  e}{\kappa_n}^2  +  \ltwo{  e_{\bar{p}}}{L_2(J; L_2(\Omega))}^2 \\
\leq&\    C r_{\max} \max_{\kappa_n\in\stelem}\!\! h_{\k_n}\nsdg{e}^2+  \ltwo{ e_{\bar{p}}}{L_2(J; L_2(\Omega))}^2 ,
\end{split}
\end{equation}
with $r_{\max}:=(\max_{\kappa_n\in\stelem} p_{\kappa_n}/\bar{p}_{n})^2$,
from standard estimation and the space-time mesh shape-regularity.

We begin by setting $g = 0 $ and $\phi =  e_{\bar{p}}$ in \eqref{Adjoint Problem}.  We test \eqref{Adjoint Problem} against $e_{\bar{p}}$, we perform integration by parts, noting that, since $z\in L_2(J;H^2(\Omega))$, we have $\jump{ \textbf{a}\nabla z}=0$ on $F^{\parallel}\subset  J\times \Gamma_{\rm int}$  and $\jump{z}=0$ on $F^{\parallel}\subset  J\times \Gamma_{}$, to deduce
\begin{equation}\label{Step 2}
\begin{split}
\ltwo{e_{\bar{p}}}{L_2(J; L_2(\Omega))}^2
&= \sum_{n=1}^{N_t} \int_{I_n}\Big(   - (  e_{\bar{p}},\partial_t z)_{} - ( e_{\bar{p}},\nabla (\textbf{a}\nabla z)) \Big)\ud  t     \\
& = \sum_{n=1}^{N_t} \int_{I_n}  \Big(   - ( e_{\bar{p}},\partial_t z)_{}
+ \tilde{a}( e_{\bar{p}},z)\Big)
\ud t    - R(z,e_{\bar{p}}) ,
\end{split}
\end{equation}
with
$
R(z,e_{\bar{p}}) : =  \int_J \int_{\Gamma}  \mean{\bold{a} ( \nabla z-  \vecL( \nabla z)) }\cdot \jump{e_{\bar{p}}} \ud{s} \ud{t}.
$
 We note that by the defining property of the $L_2-$orthogonal projection, we have
 $ \int_{I_n}  \tilde{a}(   \pi^t_{\bar{p}}e,z)  \ud t=  \int_{I_n} \tilde{a}( e, \pi^t_{\bar{p}}z)\ud t $ and $R(z,e_{\bar{p}}) = R(\pi^t_{\bar{p}}z,e) $, upon observing that $\jump{e_{\bar{p}}}=\pi_{\bar{p}}^t\jump{e}$ by construction of $\bar{p}$.

 For convenience, we now employ the $H^1$-projection operator in the time variable defined over each $I_n$, $n=1,\dots,N_t$, by
$
\mathcal{I}^t_{\bar{p}+1}w(t):=\int_{t_{n-1}}^t \pi^t_{\bar{p}}\partial_t w(s)\ud s + w(t_{n-1}),
$
so that $\partial_t (\mathcal{I}^t_{\bar{p}+1}w)=\pi^t_{\bar{p}}\partial_t w$; see, e.g., \cite{schwab} for details. Note that $\mathcal{I}^t_{\bar{p}+1}$ is continuous in $J$ and interpolates $w$ at the time nodes. We have
\begin{equation} \label{H1 projection}
\int_{I_n}  (  e_{\bar{p}},\partial_t z)_{}  \ud t= \int_{I_n}  ( e,\pi^t_{\bar{p}} \partial_t {z})_{}  \ud t  = \int_{I_n}  (e, \partial_t \mathcal{I}^t_{\bar{p}+1}{z})_{}  \ud t.
\end{equation}

We now integrate by parts over each $I_n$, use that $z(t_{N_t}) = 0$, and the interpolation-at-the-nodes property of $\mathcal{I}_{\bar{p}+1}$, and the orthogonality of the $\pi_{\bar{p}}^t-$orthogonal projection, to deduce
\begin{equation}\label{error equaiton 1}
\begin{split}
\ltwo{e_{\bar{p}}}{L_2(J; L_2(\Omega))}^2
=&\ \sum_{n=1}^{N_t} \int_{I_n}  \Big( - ( e, \partial_t \mathcal{I}^t_{\bar{p}+1}{z})_{}
+ \tilde{a}(e, \pi^t_{\bar{p}} z) \Big)  \ud t
  - R(\pi^t_{\bar{p}}z, e) \\
 =&\   \sum_{n=1}^{N_t} \int_{I_n}   \Big( (  \partial_t e,\mathcal{I}^t_{\bar{p}+1} z)_{}  +\tilde{a}(e, \pi^t_{\bar{p}} z)\Big) \ud t
 \\
&+ \sum_{n=1}^{N_t-1}(\ujump{e}_{n},z(t_{n}))_{}+(e(t_{0}^+),z(t_{0}^+))_{}
   - R(\pi^t_{\bar{p}}z, e).
\end{split}
\end{equation}

Further, for any $z_h\in \stfes$,  Galerkin orthogonality implies
$\tilde{B}( e,z_h) =R(u,z_h)= -R(u,z-z_h)$ since $R(u,z)=0$.  Subtracting this now from \eqref{error equaiton 1}, results to
\begin{equation}\label{error equation}
	\begin{split}
		\ltwo{e_{\bar{p}}}{L_2(J; L_2(\Omega))}^2
		=&\   \sum_{n=1}^{N_t} \int_{I_n}   \Big( (  \partial_t e,\mathcal{I}^t_{\bar{p}+1} z-z_h)_{}  +\tilde{a}(e, \pi^t_{\bar{p}} z-z_h)\Big) \ud t
		\\
		&+ \sum_{n=1}^{N_t-1}(\ujump{e}_{n},(z-z_h)(t_n^+))_{}+(e(t_{0}^+),(z-z_h)(t_0^+))_{}\\
	&	- R(\pi^t_{\bar{p}}z, e) + R(u,z-z_h)\\
  \leq &\  C \Big(  \sum_{\kappa_n \in \stelem}  \stp^{-1}\ltwo{ ( \mathcal{I}^t_{\bar{p}+1} z - z_h)}{\kappa_n}^2  + \int_{J}  \ndg{ \pi_{\bar{p}}^t z - z_h}_{\rm d}^2 \ud  t\\
&  +\sum_{n=0}^{N_t-1}   \ltwo{( z - z_h)(t_{n}^+)}{}^2  \Big)^\frac{1}{2} \nsdg{e}	- R(\pi^t_{\bar{p}}z, e) + R(u,z-z_h).
\end{split}
\end{equation}
Let $z_h\in \stfes$ defined on each element $\kappa_n \in\stelem$ by $z_h|_{\kappa_n} :=\pi^t_{\bar{p}_n}  \tilde{\Pi}_{ p_{\kappa_n}-\bar{p}_n }   z$
with $\tilde{\Pi}_{q}$ is the projector defined in Lemma \ref{approx_lemma} over $d$-dimensional spatial variables. Note that this choice ensures that $z_h \in \stfes$. For notational convenience, we collect all indices into a piecewise constant function $p'$ over the space-time mesh, defined by $p'|_{\kappa_n}:=p_{\kappa_n}-\bar{p}_n$. Note that, by definition, $p’\ge 1$.

We estimate each term of the right-hand side of \eqref{error equation} separately. Recalling standard $hp$-approximation bounds (see, e.g., $\cite{newpaper}$), we have for ${p'}=p_{\kappa_n}-\bar{p}_n$,
\begin{equation}\label{term1}
\begin{split}
& \sum_{\kappa_n \in \stelem} \!\! \!\!\! \stp^{-1} \ltwo{\mathcal{I}^t_{\bar{p}_n+1} z - z_h }{\kappa_n}^2
\leq
2\!\!\!\!\!\sum_{\kappa_n \in \stelem} \!\!\! \!\!\stp^{-1}  \Big(  \ltwo{\mathcal{I}^t_{\bar{p}_n+1} -  z}{\kappa_n}^2  + \ltwo{  z-    \pi_{\bar{p}_n}^t \tilde{\Pi}_{{p'}} z}{\kappa_n}^2  \Big)   \\
&\leq   C\sum_{\kappa_n \in \stelem} \stp^{-1}  \Big(  \ltwo{z-\mathcal{I}^t_{\bar{p}_n+1}    z}{\kappa_n}^2 + \ltwo{z-\pi_{\bar{p}_n}^t     z}{\kappa_n}^2  + \ltwo{  \pi_{\bar{p}_n}^t (z-   \tilde{\Pi}_{{p'}} z)}{\kappa_n}^2  \Big)   \\
&\leq   C\sum_{\kappa_n \in \stelem}   \stp^{-1} \Big(   \frac{\tmesh^2}{\bar{p}_{\kappa_n}^2} \ltwo{\partial_t  z}{\kappa_n}^2  +  \frac{ h_\kappa^4}{p_{\kappa_n}^4} \ltwo{  \frak{E}z}{L_2(I_n; H^2(\mathcal{K}))}^2  \Big)   \\
&\leq  C \max_{\kappa_n}h_{\kappa_n} \Big(r_{\max}\ltwo{ z}{H^1(J; L_2(\Omega))}^2 +\max_{\kappa_n} \frac{ h_{\kappa_n}^2}{p_{\kappa_n}^2} \ltwo{z}{L_2(J; H^2(\Omega))}^2 \Big),
\end{split}
\end{equation}
using the triangle inequality, the stability of $L_2-$orthogonal projection, Assumptions \ref{space-time-shape regular} and \ref{bounded-variation} and, finally,  Theorem 5.4 \eqref{thm-extension}, respectively. Next, we have
\begin{equation} \label{term2}
\begin{split}
\int_{J}  \ndg{\pi^t_{\bar{p}_n} z - z_h}_{\rm d}^2 \ud  t
=&\  \int_{J}  \ndg{\pi^t_{\bar{p}_n} (z - \tilde{\Pi}_{{p'}} z)}_{\rm d}^2 \ud  t\\
\leq&\    \int_{J}  \ndg{z - \tilde{\Pi}_{{p'}} z}_{\rm d}^2 \ud  t
\leq   C \max_{\kappa_n} \frac{h^2_{\kappa_n}}{p_{\kappa_n}}  \ltwo{z}{L_2(J; H^2(\Omega))}^2.
\end{split}
\end{equation}
Also,
\begin{equation}\label{term3}
	\begin{aligned}
\sum_{n=0}^{N_t-1}   \ltwo{( z - z_h)(t_{n}^+)}{}^2
 \leq &\ 2  \sum_{n=0}^{N_t-1} \!\!\su \! \Big(\ltwo{(z-\pi_{\bar{p}}^t    z)(t_{n}^+)}{\kappa}^2   +  \ltwo{\pi_{\bar{p}_n}^t ( z-    \tilde{\Pi}_{{p'}} z)(t_{n}^+)}{\kappa}^2 \Big)     \\
\leq &\ C  \sum_{\kappa_n\in \stelem}   \Big( \frac{\tmesh}{\bar{p}_{n}}  \ltwo{\partial_t  z}{\kappa_n}^2   + \frac{\bar{p}_{n}^2}{\tmesh} \ltwo{\pi_{\bar{p}}^t  (z-   \tilde{\Pi}_{r} z)}{\kappa_n}^2 \Big)     \\
\leq &\ C\sum_{\kappa_n\in \stelem}  \Big( \frac{\tmesh}{\bar{p}_{n}}  \ltwo{\partial_t  z}{\kappa_n}^2   +  \frac{h_\kappa^4}{\tmesh p_{\kappa_n}^2} \ltwo{  \frak{E}z}{L_2(J; H^2(\mathcal{K}))}^2 \Big)     \\
\leq &\ C \max_{\kappa_n} \frac{ h_{\kappa_n}}{\bar{p}_{n}} \Big(\ltwo{ z}{H^1(J; L_2(\Omega))}^2 +\max_{\kappa_n}\frac{ h_{\kappa_n}^2}{p_{\kappa_n}} \ltwo{z}{L_2(J; H^2(\Omega))}^2 \Big),
\end{aligned}
\end{equation}
using an $hp$-version trace inverse estimate and working as before.

Combining the above estimates, the first term on the right-hand side of \eqref{error equation} is shown to be bounded by
\begin{eqnarray} \label{result 1}
CC_r \sqrt{r_{\max}}    \max_{\kappa_n}  \sqrt{ h_{\kappa_n} }\nsdg{e} \ltwo{e}{L_2(J; L_2(\Omega))}.
\end{eqnarray}
Moving on to the second term on the right-hand side of \eqref{error equation}, using the stability of $\pi^t_{\bar{p}}$, we have
\begin{equation} \label{inconsistent term}
\begin{aligned}
R(\pi^t_{\bar{p}}z,e) &=   \int_{J} \int_{\Gamma}  \mean{\bold{a} ( \nabla \pi^t_{\bar{p}} z-  \vecL( \nabla \pi^t_{\bar{p}} z)) }\cdot \jump{e} \ud{s} \ud{t} \\
& \le \ltwo{ \sigma^{-1/2}\mean{\bold{a} ( \nabla z- \boldsymbol{ \tilde{\Pi}_2}( \nabla z)) }}{L_2(J,L_2(\Gamma))}^2\nsdg{e}.
\end{aligned}
\end{equation}
To bound further $R(\pi^t_{\bar{p}}z,e)$, it is sufficient to bound \mbox{I}+\mbox{II} instead, whereby
\begin{equation*}
	\begin{aligned}
	\mbox{I}:=&\  2\ltwo{\sigma^{-1/2}   \mean{\bold{a} ( \nabla z- \boldsymbol{ \tilde{\Pi}_{r}}( \nabla z)) }}{L_2(J,L_2(\Gamma))}^2, \\
	\mbox{II}:=&\ 2\ltwo{\sigma^{-1/2}  \mean{\bold{a} \vecL( \boldsymbol{ \tilde{\Pi}_{r}}( \nabla z)-   \nabla z) }}{L_2(J,L_2(\Gamma))}^2.
\end{aligned}
\end{equation*}
For  \mbox{I}, using Lemma~5.1 and, working as before, gives
\begin{eqnarray}\label{inconsistent term 1}
\mbox{I} &\leq& C  \max_{\kappa_n} \frac{h_{\k_n}^{2}}{p_{\k_n}^3}   \ltwo{ z}{L_2(J; H^2(\Omega))}^2
 \end{eqnarray}

By using the inverse estimation \eqref{inv_est_gen} and stability of $\mathbf{\Pi}_2$, and working as above, we also have
\begin{eqnarray}\label{inconsistent term 2}
\mbox{II} &\leq& C C_r \max_{\kappa_n} \frac{h_{\k_n}^{2}}{p_{\k_n}^2}   \ltwo{ z}{L_2(J; H^2(\Omega))}^2.
 \end{eqnarray}

Therefore, \eqref{inconsistent term 1} and \eqref{inconsistent term 2}, together with the maximal parabolic regularity estimate \eqref{regularity estimate} give
\begin{equation}\label{result 2}
R(\pi_{\bar{p}}^tz,e)  \leq  CC_r    \max_{\kappa_n} \frac{h_{\kappa_n}}{p_{\kappa_n}}  \nsdg{e} \ltwo{e}{L_2(J; L_2(\Omega))}.
\end{equation}

Next, we bound the last term $R(u,z-z_h)$ on the right-hand side of \eqref{error equation}. A key observation is the following $\jump{z-z_h}=-\jump{z_h}=\jump{\pi_{\bar{p}}^tz- z_h}=\jump{\pi_{\bar{p}}^t(z- \Pi_{{p'}}z)}$, for the continuity of $\pi_{\bar{p}}^tz$ with respect to the spatial variables. Hence, we have
\begin{equation}  \label{result 3}
\begin{aligned}
& R(u,z-z_h)=  \int_{J} \int_{\Gamma}  \mean{\bold{a} ( \nabla_h u-  \vecL( \nabla_h u)) }\cdot \jump{ \pi^t_{\bar{p}} z-z_h} \ud{s} \ud{t}  \\
&\leq  \Big( \int_{J} \int_{\Gamma} \sigma^{-1}  |\mean{\bold{a} ( \nabla_h u-  \vecL( \nabla_h u)) }|^2 \ud{s} \ud{t}\Big)^{\frac{1}{2}} \Big(  \int_{J} \int_{\Gamma} \sigma \jump{ \pi^t_{\bar{p}} (z-  \tilde{\Pi}_{\bar{p}}  )}^2 \ud{s} \ud{t}\Big)^{\frac{1}{2}}  \\
 &\leq CC_r   \max_{\kappa_n}  \frac{h_{\kappa_n}}{\sqrt{p_{\kappa_n}}} \ltwo{e}{L_2(J; L_2(\Omega))}
 \Big( \sum_{\kappa_n \in \stelem} \frac{h_{\kappa_n}^{2s_{\kappa_n}}}{p_{\kappa_n}^{2l_{\kappa_n}}}  {\cal D}_{\kappa_n}(h_{\k_n},p_{\k_n}) \|\frak{E} u\|_{H^{l_{\k_n}}(\mathcal{K}_n)}^2  \Big)^{1/2}.
\end{aligned}
\end{equation}
Combining \eqref{result 1}, \eqref{result 2},  \eqref{result 3} and \eqref{error equation} with \eqref{relation 1}, the result follows.
\end{proof}

Next, we discuss the extension to time-dependent diffusion coefficient $\mbf{a}\in W^{1,\infty}(J\times\Omega)$, assuming the validity of the maximal parabolic regularity estimate; we refer to Wloka~\cite{Wloka} for a detailed discussion on parabolic regularity in this setting.

\begin{theorem}[$a\equiv a(t,\mbf{x})$]\label{L2_L2 norm_td}
	Consider the setting of  Theorem~{\ref{thm:apriori}}, and assume the parabolic regularity estimate \eqref{regularity estimate}  holds along with  Assumption \ref{bounded-variation} for $\mbf{a}\in W^{1,\infty}(J\times\Omega)$. Then, we have the bound
	\begin{eqnarray*}
		\ltwo{u-u_{h}}{L_2(J; L_2(\Omega))}^2 &\le& Cr_{\max} \max_{\kappa_n\in\stelem}\!\! h_{\k_n}\!\!\!\!\sum_{\kappa_n \in \stelem}\! \frac{h_{\kappa_n}^{2s_{\kappa_n}}}{p_{\kappa_n}^{2l_{\kappa_n}}}
		\big(   {\cal G}_{\kappa_n}(h_{\k_n},p_{\k_n}) \\
		&&~~~~~~~~~~~~~~~~~~~~~~~~~ +{\cal D}_{\kappa_n}(h_{\k_n},p_{\k_n}) \big)\|\frak{E} u\|_{H^{l_{\k_n}}(\mathcal{K}_n)}^2,
	\end{eqnarray*}
	with  $r_{\max}:=(\max_{\kappa_n\in\stelem} p_{\kappa_n}/\bar{p}_{n})^2$, with the constant $C>0$, independent of $u$, $u_h$, of the discretization parameters and of number of faces per element.
\end{theorem}
\begin{proof} The proof and notation follows that of the previous theorem. Here we only comment on the differences in the proof.

Working as before, \eqref{relation 1} holds and, using the same arguments, we arrive to
	\begin{equation}\label{Step 2_td}
		\begin{split}
			\ltwo{e_{\bar{p}}}{L_2(J; L_2(\Omega))}^2
			& = \sum_{n=1}^{N_t} \int_{I_n}  \Big(   - ( e_{\bar{p}},\partial_t z)_{}
			+ \tilde{a}( e_{\bar{p}},z)\Big)
			\ud t    - R(\mbf{a}; z,e_{\bar{p}}) ,
		\end{split}
	\end{equation}
	with
	$
	R(\mbf{a};z,e_{\bar{p}}) : =  \int_J \int_{\Gamma}  \mean{\bold{a} ( \nabla z-  \vecL( \nabla z)) }\cdot \jump{e_{\bar{p}}} \ud{s} \ud{t},
	$
	denoting explicitly the diffusion coefficient as variable also.
	From this point we have to modify the argument above, since we cannot ``move'' $\pi^t_{\bar{p}}$ from $a$ into $z$ using $L_2-$orthogonality property in both $\tilde{a}$ and $R$.

We now consider the bilinear form $\tilde{a}_0(\cdot,\cdot)$ which arises by replacing the diffusion coefficient $\mbf{a}$ by $\mbf{a}_0:=\pi_0^t\mbf{a}$, i.e., taking the average with respect to the $t$-variable on each time interval. Then, we have, respectively,
	\[
	\begin{aligned}
	 \int_{J}  \tilde{a}(  e_{\bar{p}},z)  \ud t &=  \int_{J}  (\tilde{a}-\tilde{a}_0)(  e_{\bar{p}},z)  \ud t+  \int_{J} \tilde{a}_0( e, \pi^t_{\bar{p}}z)\ud t\\
	 &= \int_{J}  \Big( (\tilde{a}-\tilde{a}_0)(   e_{\bar{p}},z) -  (\tilde{a}-\tilde{a}_0)( e, \pi^t_{\bar{p}}z)\Big) \ud t+ \int_{J} \tilde{a}( e, \pi^t_{\bar{p}}z)\ud t
	 \\
	  &=:  \mathcal{R}_{\tilde{a}}+ \int_{J} \!\tilde{a}( e, \pi^t_{\bar{p}}z)\ud t.
  \end{aligned}
	  \]
	  Completely analogously, upon observing that $\jump{e_{\bar{p}}}=\pi_{\bar{p}}^t\jump{e}$ by construction of $\bar{p}$, we have
	  	\[
	  \begin{aligned}
	  	R(\mbf{a};z,e_{\bar{p}})& = 	R(\mbf{a}-\mbf{a}_0;z,e_{\bar{p}})+	R(\mbf{a}_0;\pi_{\bar{p}}^t z,e)\\
	  &	= \Big(	R(\mbf{a}-\mbf{a}_0;z,e_{\bar{p}}) - R(\mbf{a}-\mbf{a}_0;\pi^t_{\bar{p}}z,e) \Big)+R(\mbf{a};\pi_{\bar{p}}^t z,e)\\
	  &	=:  \mathcal{R}_{R} +R(\mbf{a};\pi_{\bar{p}}^t z,e).
	  \end{aligned}
	  \]
	So, working as before, and using the last two identities, we arrive at
%
	\begin{equation}\label{error equation_td}
		\begin{split}
			\ltwo{e_{\bar{p}}}{L_2(J; L_2(\Omega))}^2
		\leq &\  C \Big(  \sum_{\kappa_n \in \stelem}  \stp^{-1}\ltwo{ ( \mathcal{I}^t_{\bar{p}+1} z - z_h)}{\kappa_n}^2  + \int_{J}  \ndg{ \pi_{\bar{p}}^t z - z_h}_{\rm d}^2 \ud  t\\
		&\qquad  +\sum_{n=0}^{N_t-1}   \ltwo{( z - z_h)(t_{n}^+)}{}^2  \Big)^\frac{1}{2} \nsdg{e}	\\
		&\	- R(\mbf{a};\pi^t_{\bar{p}}z, e) + R(\mbf{a};u,z-z_h) +\mathcal{R}_{\tilde{a}}- \mathcal{R}_{R}.
	\end{split}
\end{equation}
We select $z_h\in \stfes$ as in the previous section and we estimate each term on the right-hand side of \eqref{error equation_td} separately; all terms but the last two can be estimated identically to the proof in the previous section. To complete the proof, we estimate $R_{\tilde{a}}$ and $\mathcal{R}_{R}$ from above.

For $R_{\tilde{a}}$, using the stability of $\pi_{\bar{p}}^t$ in $L_2$-norm in time, along with standard estimation, we have, respectively,
\[
\begin{aligned}
	\mathcal{R}_{\tilde{a}}=&\ \int_J\bigg(\int_\Omega (\mbf{a}-\mbf{a}_0)\nabla_h e_{\bar{p}}\cdot\nabla z\ud x -\int_\Gamma \mean{(\mbf{a}-\mbf{a}_0)\vecL \nabla z} \cdot \jump{e_{\bar{p}}} \ud s\bigg)\ud t \\
	&\ -\int_J\bigg(\int_\Omega (\mbf{a}-\mbf{a}_0)\nabla_h e\cdot\nabla \pi_{\bar{p}}^t z\ud x -\int_\Gamma \mean{(\mbf{a}-\mbf{a}_0)\vecL \nabla \pi_{\bar{p}}^t z} \cdot \jump{e}\ud s \bigg)\ud t\\
	\le &\ C\norm{\mbf{a}-\mbf{a}_0}{L_2(J;L_\infty(\Omega))}\ndg{e}  \norm{z}{L_\infty(J;H^1(\Omega))},
\end{aligned}
\]
with $C>0$ independent of $h$ and of $p$.

Now, using the stability of $\pi_{\bar{p}}^t$, along with standard estimation, we  also have
 \[
 \begin{aligned}
 		\mathcal{R}_{R}=&\  \int_J \int_{\Gamma}  \mean{(\bold{a}-\bold{a}_0) ( \nabla z-  \vecL( \nabla z)) }\cdot  \pi_{\bar{p}}^t \jump{e} \ud{s} \ud{t} \\
 	&\ - \int_J \int_{\Gamma}  \mean{(\bold{a}-\bold{a}_0) \pi_{\bar{p}}^t ( \nabla z-  \vecL( \nabla z)) }\cdot \jump{e} \ud{s} \ud{t}\\
 	\le &\ C\norm{\mbf{a}-\mbf{a}_0}{L_\infty(J;L_\infty(\Omega))}\ndg{e}  \norm{z}{L_2(J;H^2(\Omega))},
 \end{aligned}
 \]
with $C>0$ independent of $h$ and of $p$.

Observing now, the estimate
\[
\norm{\mbf{a}-\mbf{a}_0}{L_\infty(J;L_\infty(\Omega))}\le C \max_{n} \tau_n \norm{\mbf{a}}{W^{1,\infty}(J\times \Omega)},
\]
the space-time shape-regularity, the above bounds and the parabolic regularity estimate \eqref{regularity estimate}  imply the result.
\end{proof}

The $L_2(J;L_2(\Omega))$--norm error bound in Theorem \ref{L2_L2 norm} (and Theorem \ref{L2_L2 norm_td}) is suboptimal with respect to the mesh size $h$ by half an order of $h$, and sub-optimal in $p$ by $3/2$ order. (The respective space-time tensor-product basis dG method, using the same approach can be shown to be $h$-optimal and $p$-suboptimal one order of $p$.) The numerical experiments in the next section confirm the suboptimality in $h$ for the proposed method, but at the same time highlight its competitiveness with respect to standard (optimal) methods.

An interesting further development would be the use of different polynomial degrees in space and in time as done, e.g., in \cite{sudirham2006space,van2008space} in this context of \emph{total degree} space-time basis. The exploration of a number of index sets for space-time polynomial basis, including this case, will be discussed elsewhere. Nevertheless, the above proof of the $L_2(J;L_2(\Omega))$--norm error bound would carry through, with minor modifications only, for various choices of space-time basis function index sets.


\section{Numerical examples}\label{numerics}
We shall present a series of numerical experiments to investigate the asymptotic convergence behavior of the proposed space-time dG method.
We shall also make comparisons with known methods on space-time hexahedral meshes, such as the tensor-product space-time dG method and the dG-time stepping scheme combined with conforming finite elements in space. Furthermore, an implementation using prismatic space-time meshes with polygonal bases is presented and its convergence is assessed. In all experiments we choose $C_\sigma=10$. The polygonal spatial mesh giving rise to the space-time prismatic elements is generated through the PolyMesher MATLAB library \cite{polymesher}. The High Performance Computing facility ALICE of the University of Leicester was used for the numerical experiments.

\subsection{Example 1}
We begin by considering a smooth problem for which  $u_0$ and $f$ are chosen such that the exact solution $u$ of \eqref{Problem} is given by:
\begin{equation}\label{example1}
u(x,y,t)= \sin(20\pi t)e^{-5((x-0.5)^2+(y-0.5)^2)} \quad \text{in } J \times  \Omega ,
\end{equation}
for $J=(0,1)$ and $\Omega  =  (0,1)^2$, and $\bold{a}(x,y,t)$ is the identity matrix. Notice that the solution oscillates in time. To asses the convergence rate with respect to the space-time mesh diameter $h_{\kappa_n}$ on (quasi)uniform meshes, we fix the ratio between the spatial and temporal mesh sizes to be $h_{\kappa}/\tmesh  =  10$.

The convergence rate with respect to decreasing space-time mesh size $h_{\k_n}$ in three different norms is given in Figure \ref{Ex1_h_refinement} for space-time prismatic elements with rectangular bases (standard hexahedral space-time elements) and for prismatic meshes with quasiuniform  polygonal bases: all computations are performed over $16$, $64$, $256$, $1024$,  $4096$ spatial rectangular or polygonal elements and for $40$, $80$, $160$, $320$, $640$ time-steps, respectively.

\begin{figure}[t]
\begin{center}
\begin{tabular}{cc}
\vspace{-0.1cm}
\includegraphics[scale=0.29]{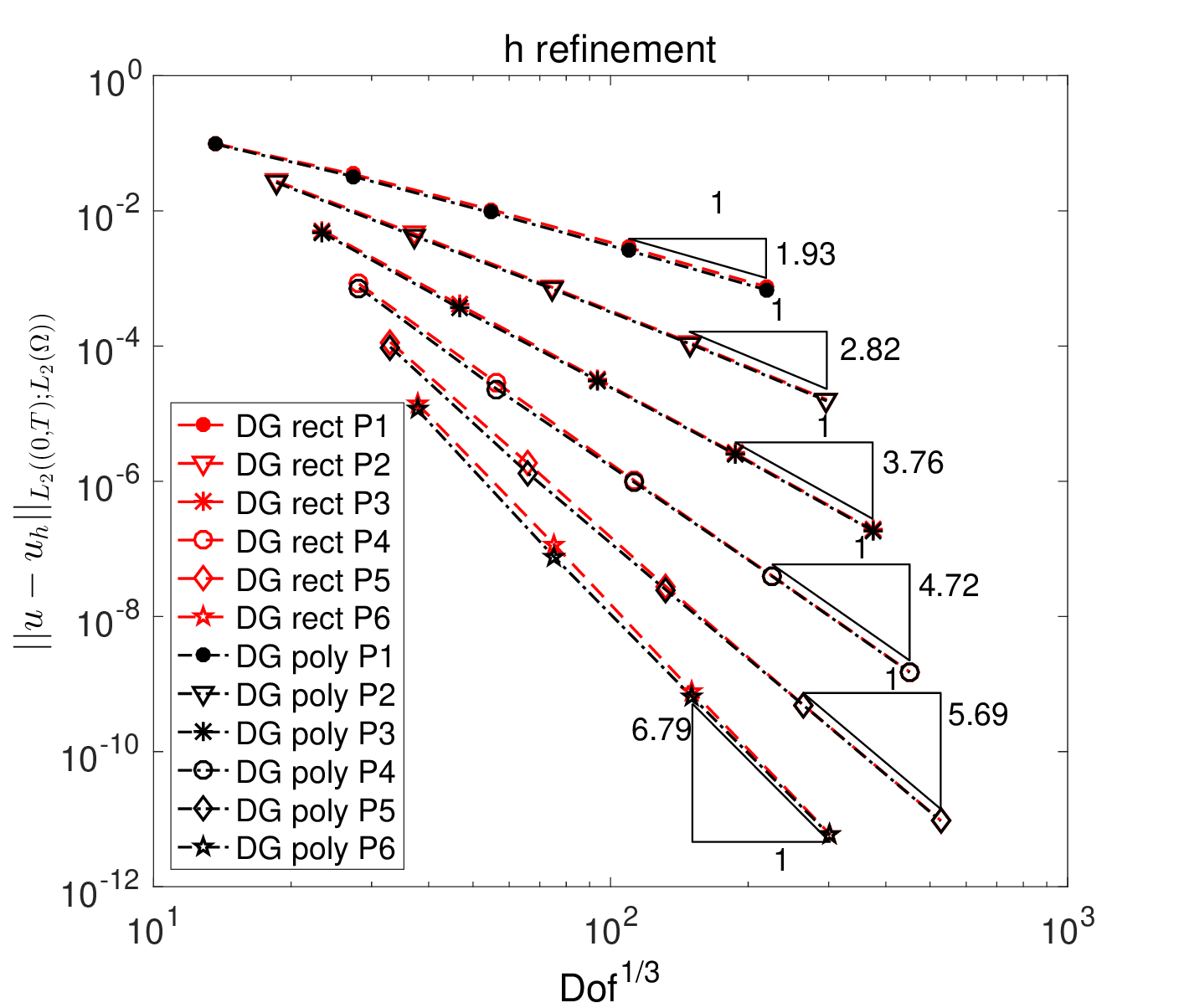}&
\hspace{-1cm}
\includegraphics[scale=0.29]{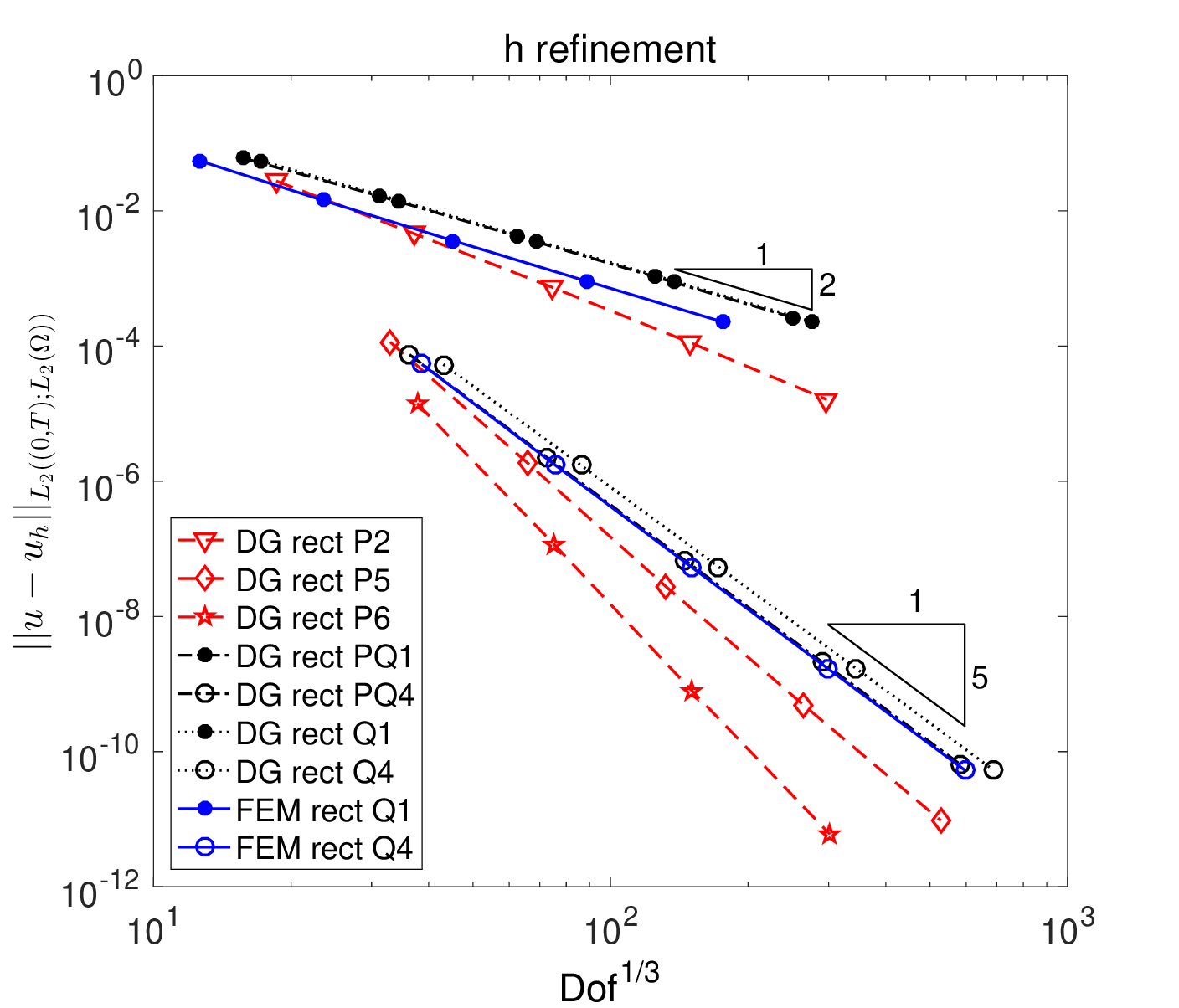} \\
\vspace{-0.1cm}
\includegraphics[scale=0.29]{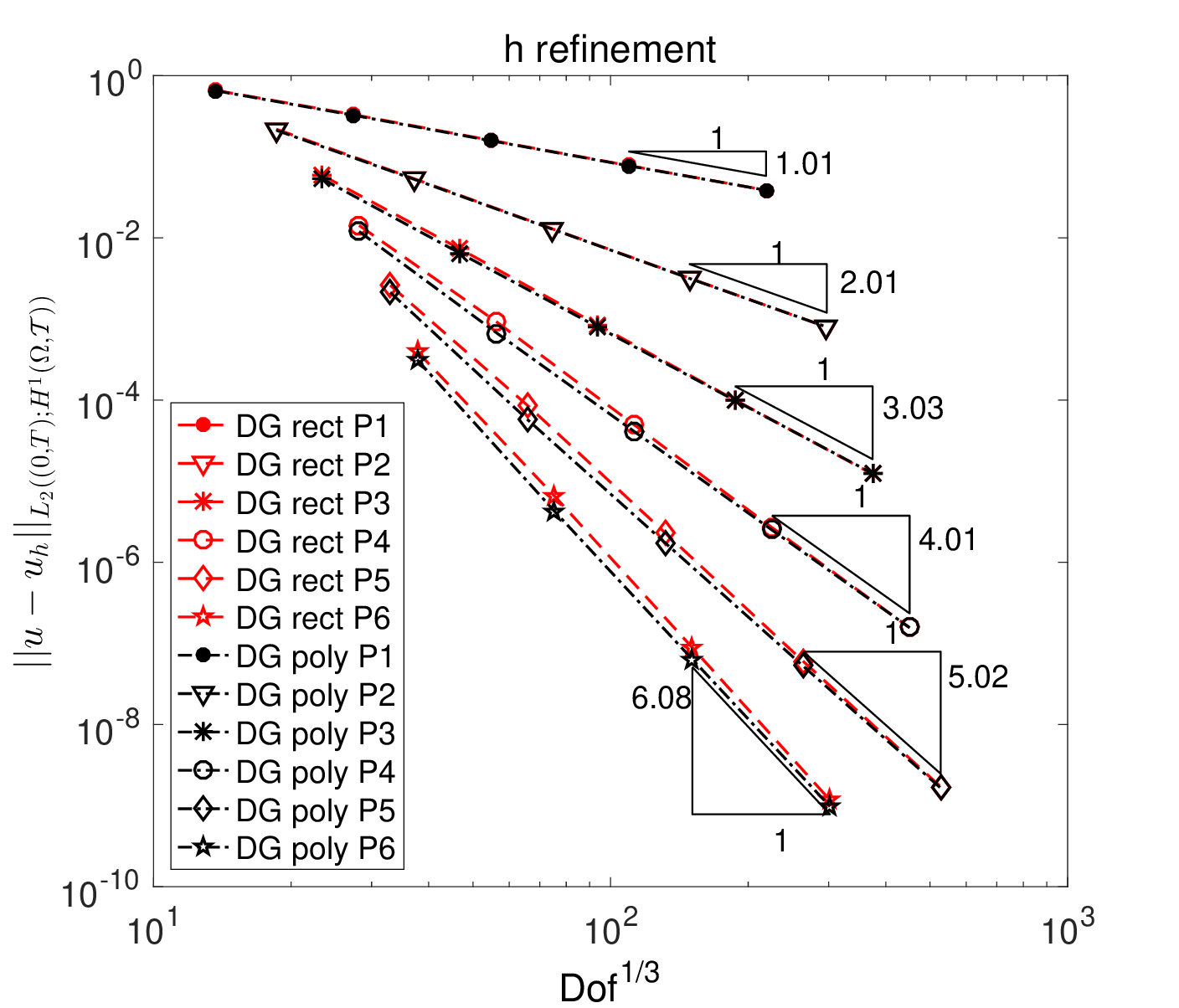}&
\hspace{-1cm}
\includegraphics[scale=0.29]{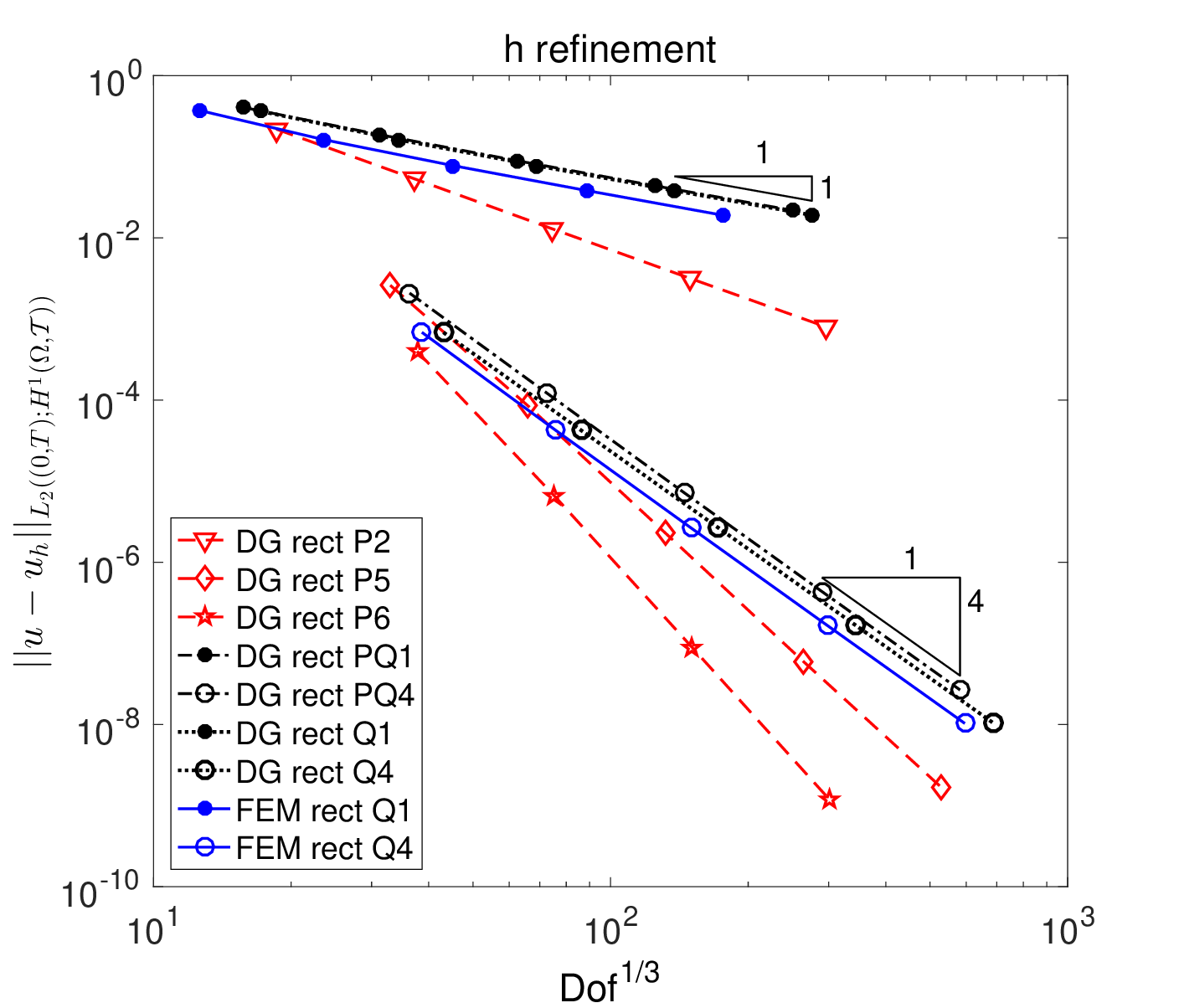} \\
\vspace{-0.3cm}
\includegraphics[scale=0.29]{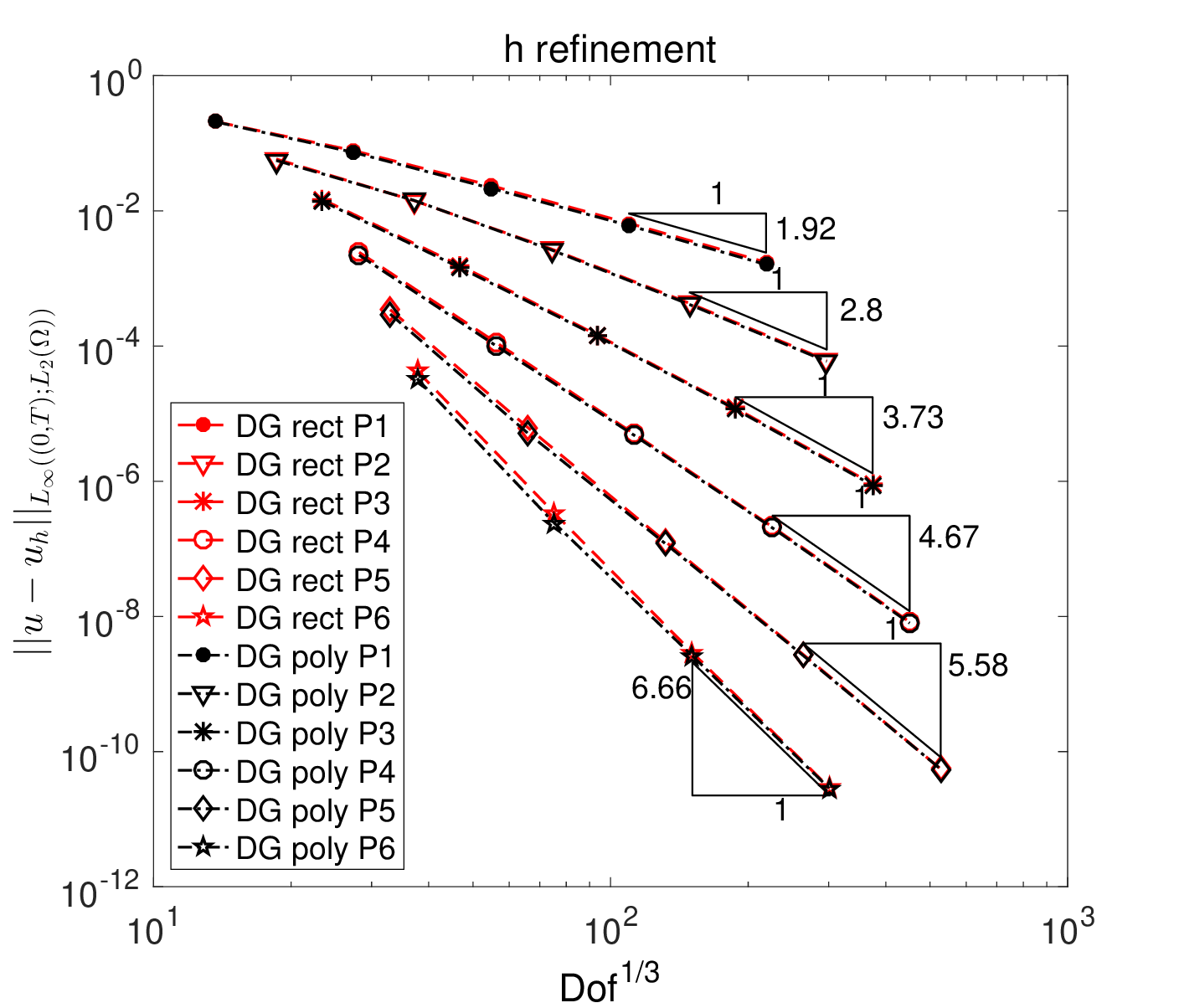}&
\hspace{-1cm}
\includegraphics[scale=0.29]{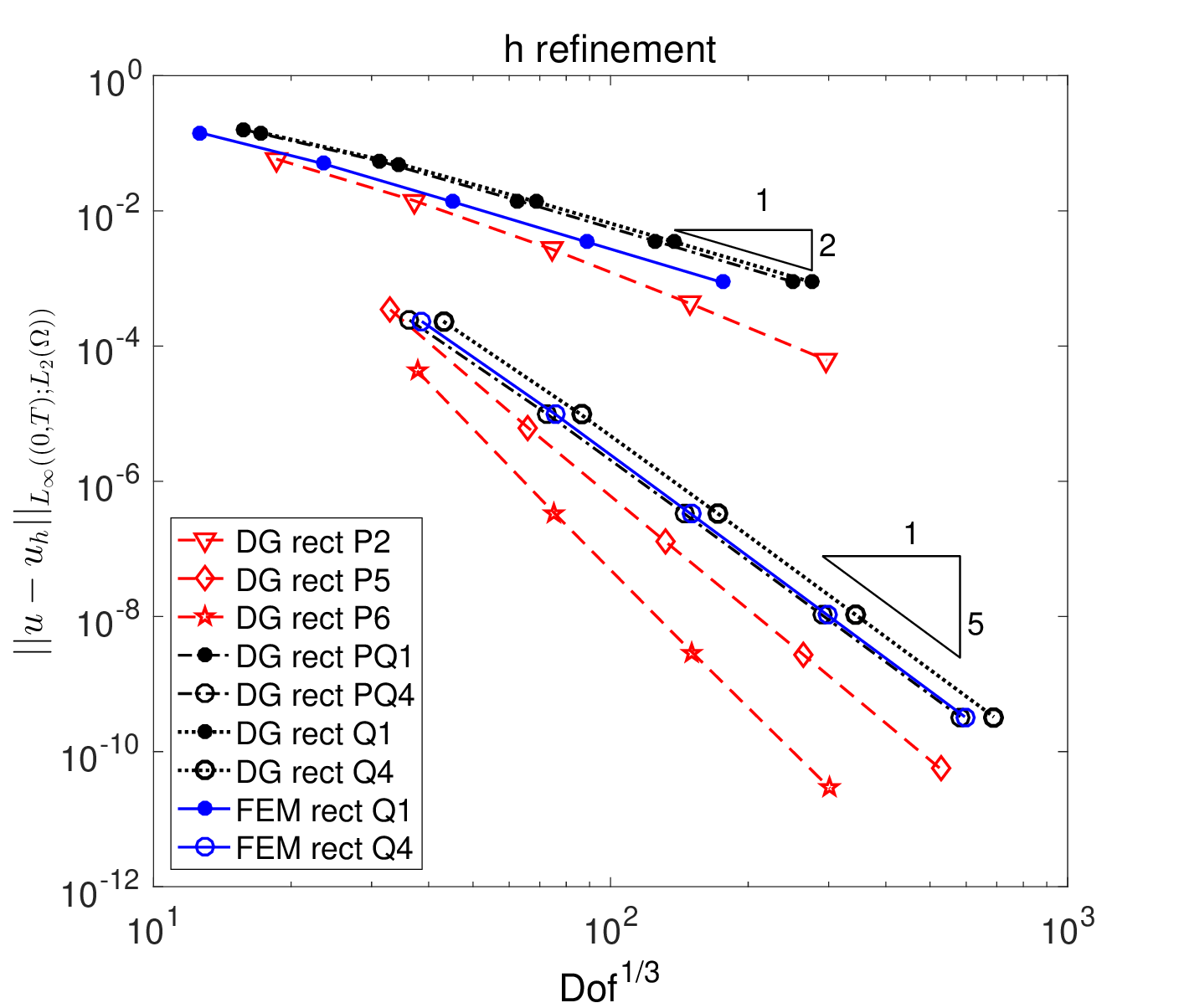} \\
\end{tabular}
\end{center}
\caption{Example 1. DG(P) under $h$--refinement (left) and comparison with other methods (right) for three different norms. }\label{Ex1_h_refinement}
\vspace{-0.8cm}
\end{figure}
\begin{figure}[h]
\begin{center}
\begin{tabular}{cc}
\vspace{-0.15cm}
\hspace{-0.5cm}
\includegraphics[scale=0.29]{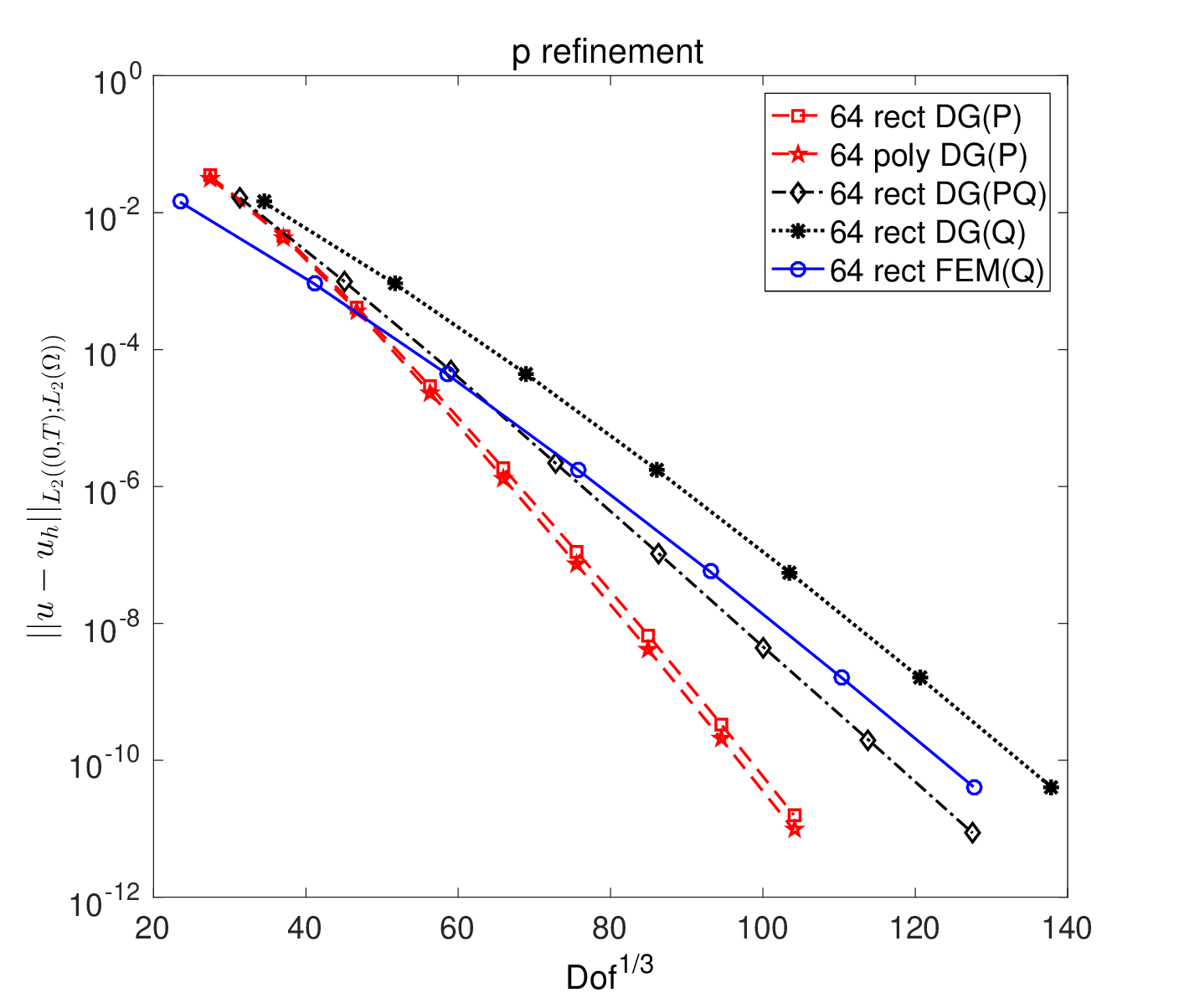}&
\hspace{-0.8cm}
\includegraphics[scale=0.29]{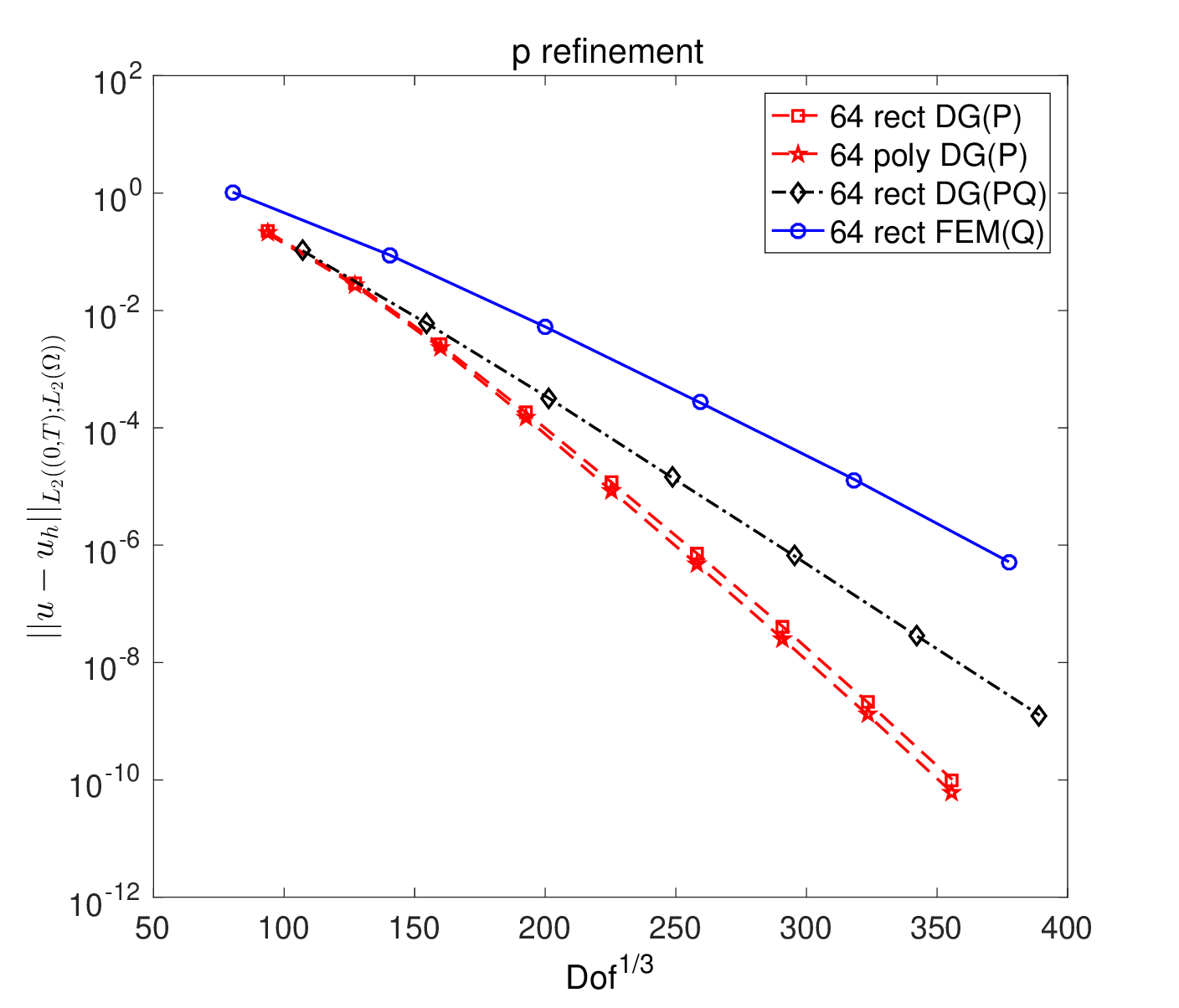} \\
\vspace{-0.15cm}
\hspace{-0.5cm}
\includegraphics[scale=0.29]{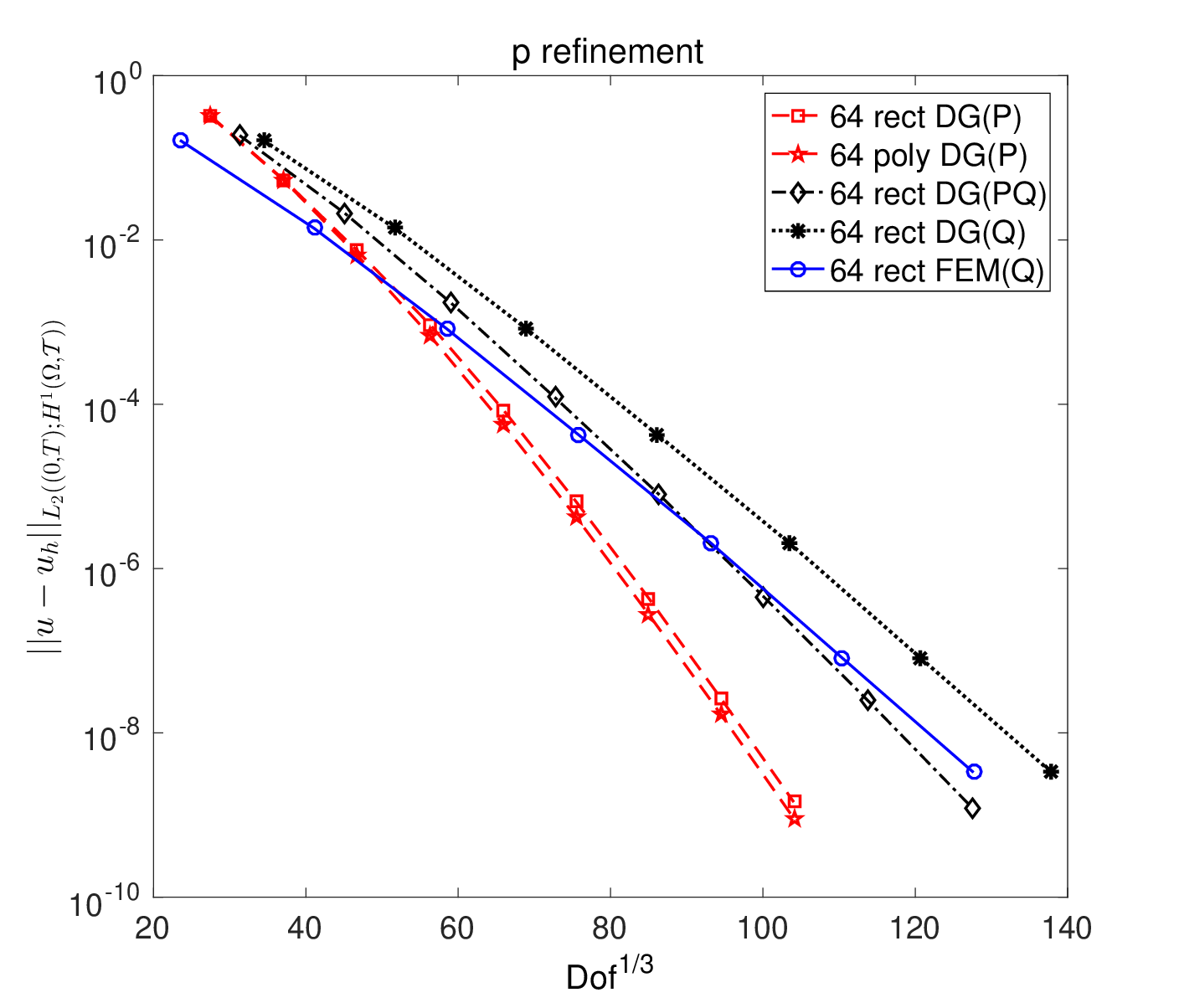}&
\hspace{-0.8cm}
\includegraphics[scale=0.29]{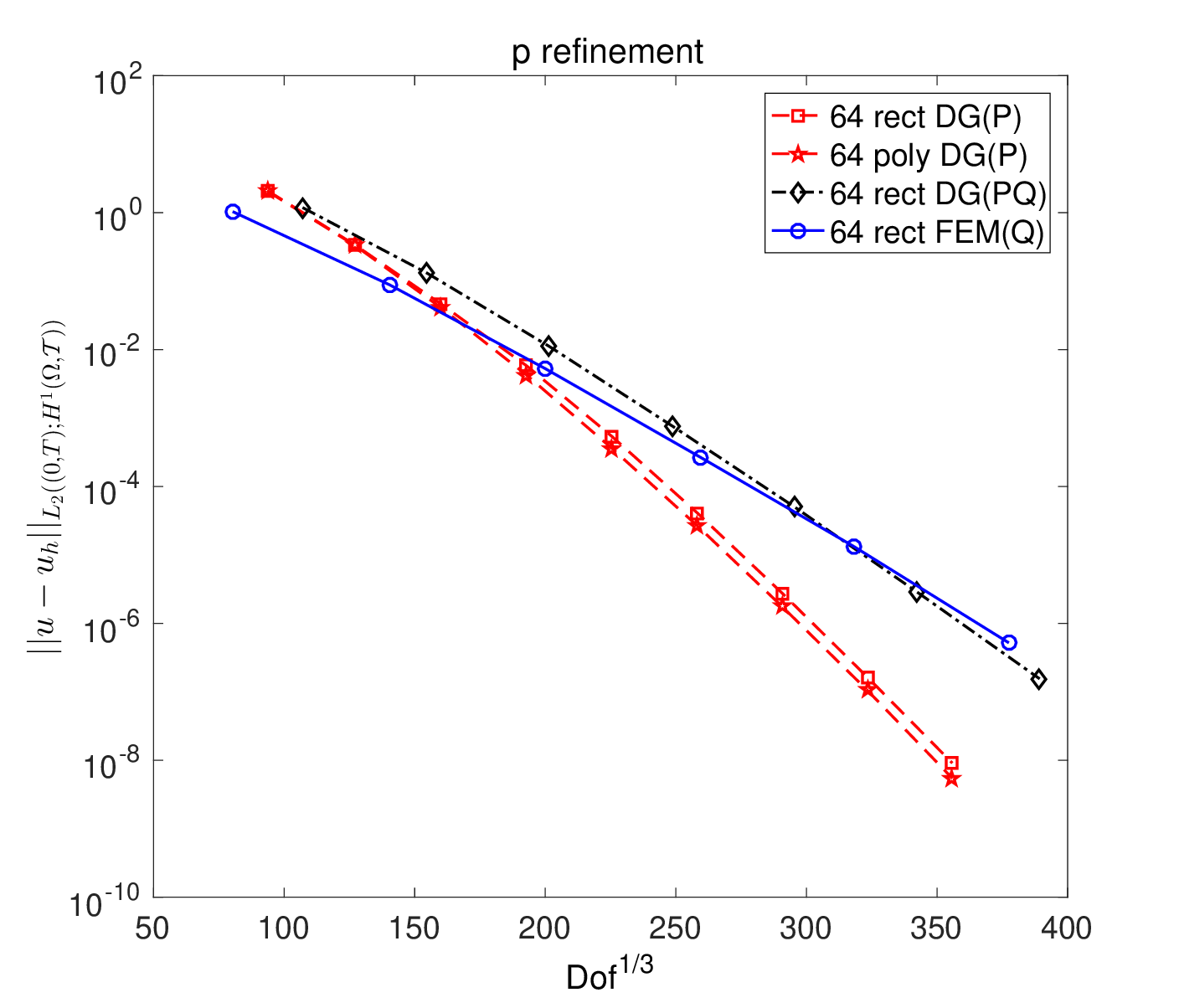} \\
\vspace{-0.5cm}
\hspace{-0.5cm}
\includegraphics[scale=0.29]{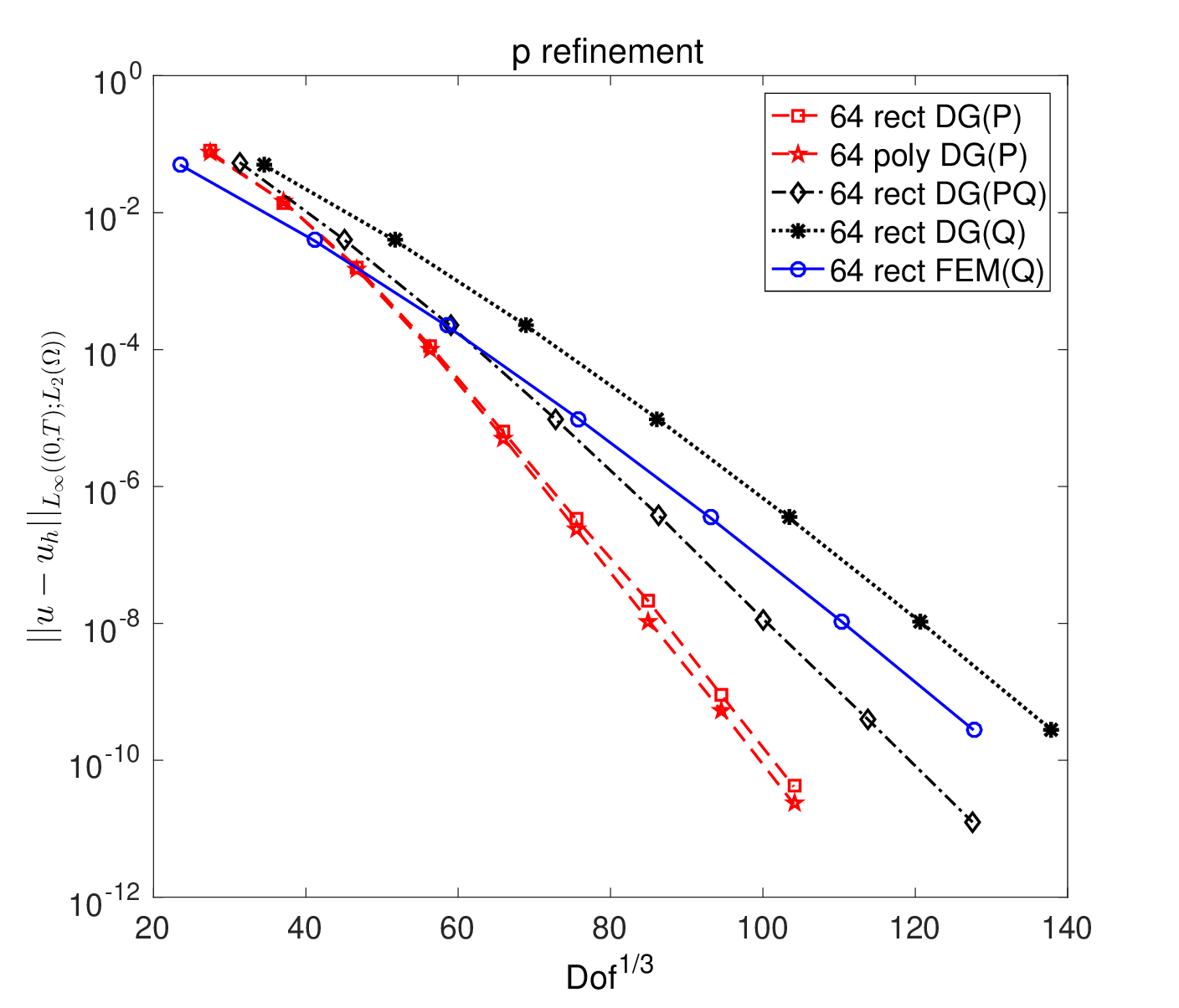}&
\hspace{-0.8cm}
\includegraphics[scale=0.29]{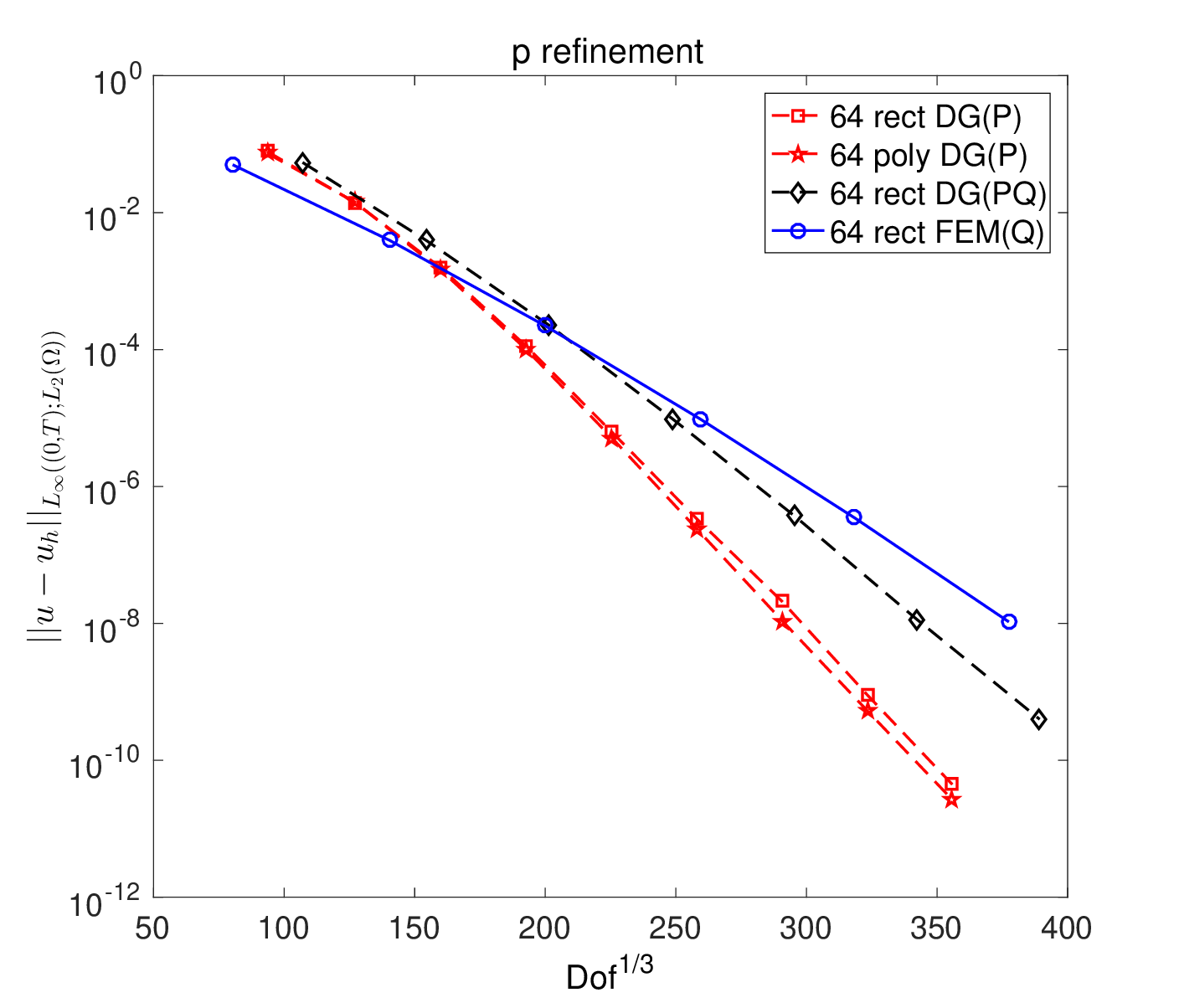} \\
\end{tabular}
\end{center}
\caption{Example 1. Convergence under $p$--refinement for $T=1$ with $80$ time steps (left) and for $T=40$ with $3200$ time steps (right) for three different norms. For (left) figures,  DG(P) with  $p=1,\dots,9$, DG(PQ) with $p=1,\dots,8$, DG(Q) and FEM(Q) with $p=1,\dots,7$.} \label{Ex1_p_refinement}
\vspace{-0.6cm}
\end{figure}

The left three plots in  Figure \ref{Ex1_h_refinement}, show the rate of convergence for the proposed dG scheme using $\mathcal{P}_p$ basis, for  $p=1,2,\dots,6$, on each 3-dimensional space-time element, against the total space-time degrees of freedom (Dof). This will be referred to as `DG(P)' for short, with `rect' meaning spatial rectangular elements and `poly' referring to general polygonal spatial elements in the legends. The observed rates of convergence are also given in the legends. The error appears to decay at essentially the same rate for both rectangle and polygonal spatial meshes, with very similar constants. Indeed, the DG(P) scheme  appears to converge at an optimal rate $\mathcal{O}(h^{p})$ in the $L_2(J;H^1(\Omega,\mathcal{T}))$--norm for $p = 1, 2,\dots, 6$ (cf.~Corollary \ref{cor_conv_l2h1}),  while the convergence appears to be slightly sub-optimal,  $\mathcal{O}(h^{p+1/2})$, in the $L_2(J;L_2(\Omega))$-- and $L_\infty(J;L_2(\Omega))$--norms. Nonetheless, the observed $L_2(J;L_2(\Omega))$--norm convergence rate is in accordance with the a priori bound of Theorem \ref{L2_L2 norm}.

To assess whether this marginal deterioration in the $h$-convergence rates for the DG(P) method, is an acceptable trade-off with respect to the number of degrees of freedom (Dof) gained by the use of reduced cardinality space-time local elemental basis, we present a comparison between $4$ different space-time schemes over rectangular space-time meshes in the right plots of Figure \ref{Ex1_h_refinement}. More specifically, we compare the proposed DG(P) method, against the time-dG method with: 1) discontinuous tensor-product space-time bases consisting of $\mathcal{P}_p$-basis in space ('DG(PQ)' for short), 2) full discontinuous tensor-product $\mathcal{Q}_p$ basis in space  ('DG(Q)' for short) and, 3) the standard finite element method with conforming tensor-product $\mathcal{Q}_p$ basis in space  ('FEM(Q)' for short) \cite{thomee1984galerkin,schotzau2000time}. Unlike the proposed DG(P) scheme, the three other methods achieve the optimal $h$-convergence rate in the three different norms: $\mathcal{O}(h^{p+1})$ in $L_2(J;L_2(\Omega))$-- and $L_\infty(J;L_2(\Omega))$--norms and $\mathcal{O}(h^{p})$ in $L_2(J;H^1(\Omega,\mathcal{T}))$--norm, respectively. Nevertheless, plotting the error against the total degrees of freedom, a more relevant measure of computational effort,  we see, for instance, that DG(P) with $p=2$ use less Dofs compared to the other $3$ methods with $p=1$, to achieve the same level of accuracy, at least for relatively large number of space time elements. {More pronounced gains are observed when comparing DG(P) with $p=5,6$ with  the other methods with $p=4$, across all mesh sizes and error norms.} Analogous results hold for DG(P) with $p=3,4$.

Moving on to $p$-version, Figure \ref{Ex1_p_refinement} shows the error for all four methods in the three different norms for fixed space-time mesh size under $p$-refinement. The left three plots are with final time $T=1$, for fixed $64$ spatial elements and $80$ time steps. As expected, exponential convergence is observed since the solution to \eqref{example1} is analytic over the computational domain. However, the convergence slope for DG(P) with both rectangular and polygonal spatial elements appears to be steeper compared to the other $3$ methods. Indeed, DG(P) achieves the same level of accuracy for $p\ge 3$ with less number of Dofs in all $3$ different norms.

The right three plots for the same computation run for a longer time interval with final time $T=40$, that is $3200$ time-steps. Since DG(P) use less Dofs per space-time element compared to the other three methods, the acceleration of $p-$convergence for the DG(P) is expected to be more pronounced for long time computations. Again DG(P) achieves the same level of accuracy with fewer degrees of freedom for $p\ge 3$. For instance, the total DG(P) Dofs for this problem are about $45$ million when $p=9$, compared to about $53$ million Dofs with $p=6$ for FEM(Q), while the error for DG(P) is about $100$ times smaller than the error of FEM(Q) in all three norms.

\begin{figure}[t]
\begin{center}
\begin{tabular}{cc}
\vspace{-0.5cm}
\hspace{-0.4cm} \includegraphics[scale=0.29]{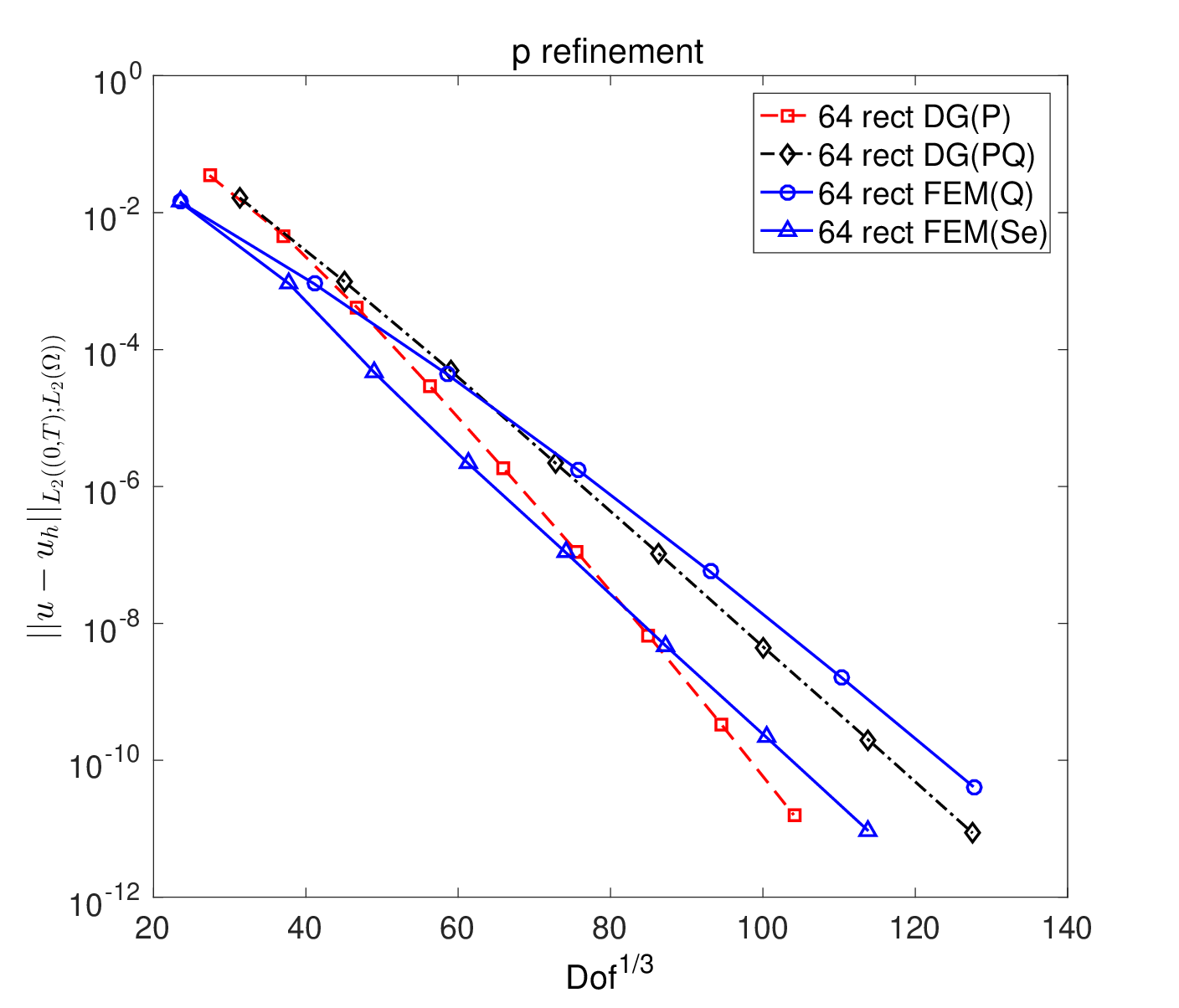}&
\hspace{-1cm}
\includegraphics[scale=0.29]{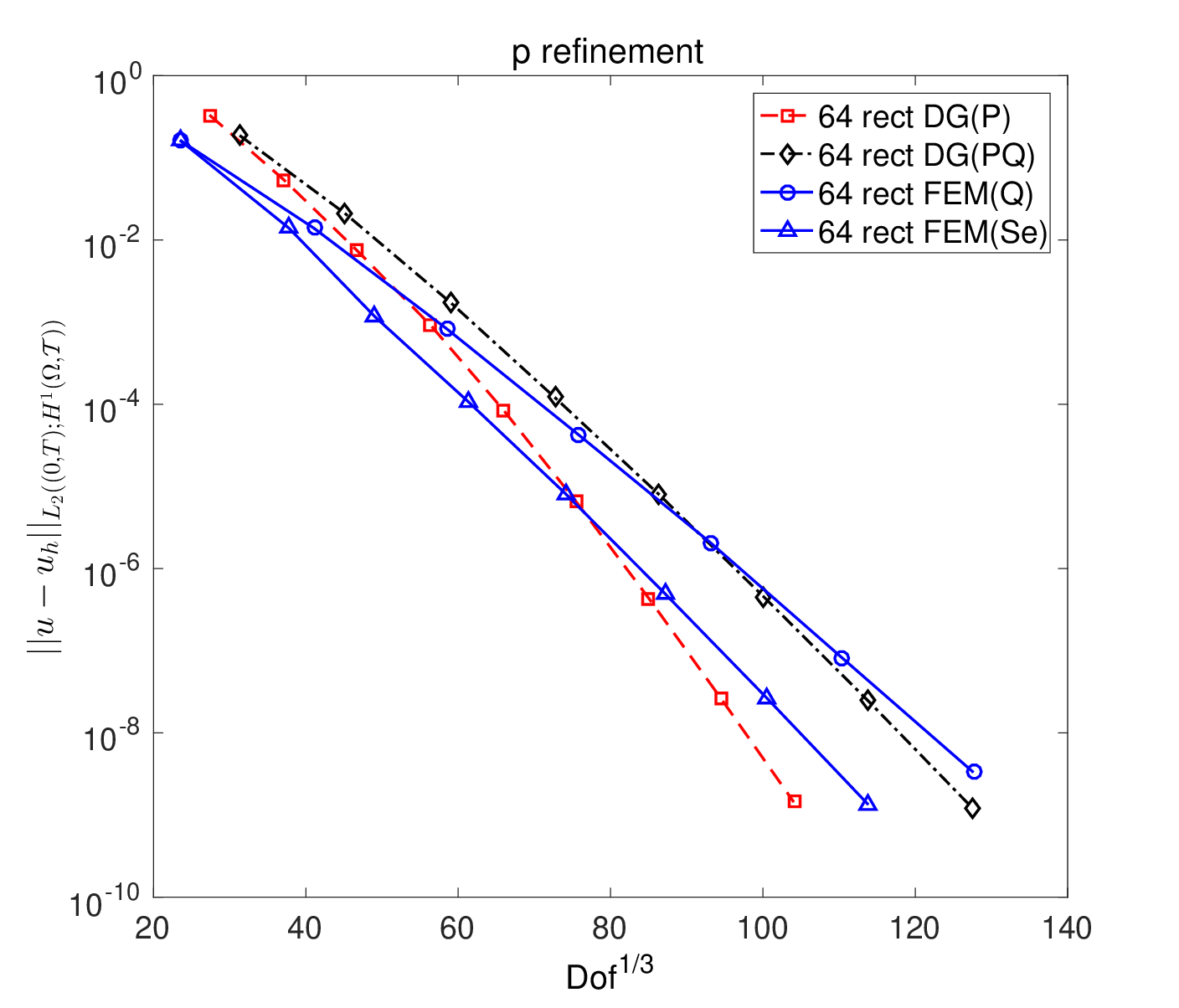} \\
\end{tabular}
\end{center}
\caption{Example 1. Convergence under $p$--refinement for $T=1$ with $80$ time steps for two different norms. For above figures,  DG(P) with  $p=1,\dots,9$, DG(PQ) with $p=1,\dots,8$, FEM(Q) with $p=1,\dots,7$ and FEM(Se) with $p=1,\dots,8$.} \label{Ex1_p_refinement_serendipity}
\vspace{-0.3cm}
\end{figure}

Next, we investigate the convergence performance of the proposed approach against dG-time stepping spatially conforming FEM with the cheaper conforming serendipity elements in space on hexahedral space-time meshes. Numerical results under $p$-refinement are given in Figure~\ref{Ex1_p_refinement_serendipity}, with FEM(Se) standing for the latter method. We note that for $d=2$, the cardinality of the local serendipity space equals the cardinality of $\mathcal{P}_p$--basis plus two more Dofs. We observe that  the convergence slope of FEM(Se) is steeper than that of FEM(Q) and almost parallel to DG(PQ), but it is still not steeper than the convergence slope of DG(P). We observe that  DG(P) with $p=7$ gives smaller error against Dofs than FEM(Se) with $p=6$.  Noting that serendipity basis in three dimensions uses considerably more Dofs compared to total degree $\mathcal{P}_p$-basis, it is expected that  DG(P) will achieve smaller error for the same Dofs than FEM(Se) with lower order that $7$ polynomials for $d=3$.

\begin{figure}[t]
\begin{center}
\begin{tabular}{cc}
\vspace{-0.5cm}
\hspace{-0.4cm} \includegraphics[scale=0.29]{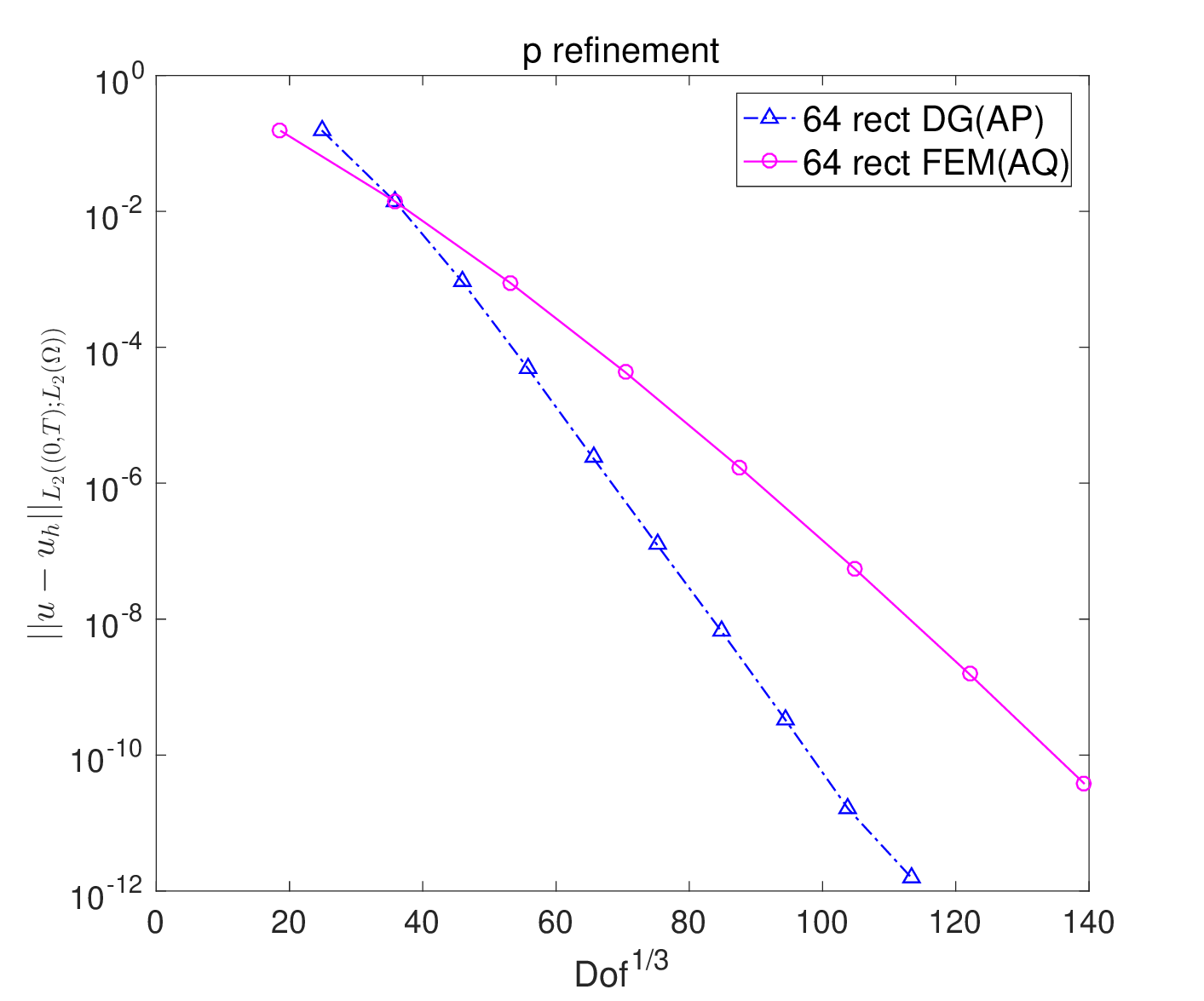}&
\hspace{-1cm}
\includegraphics[scale=0.29]{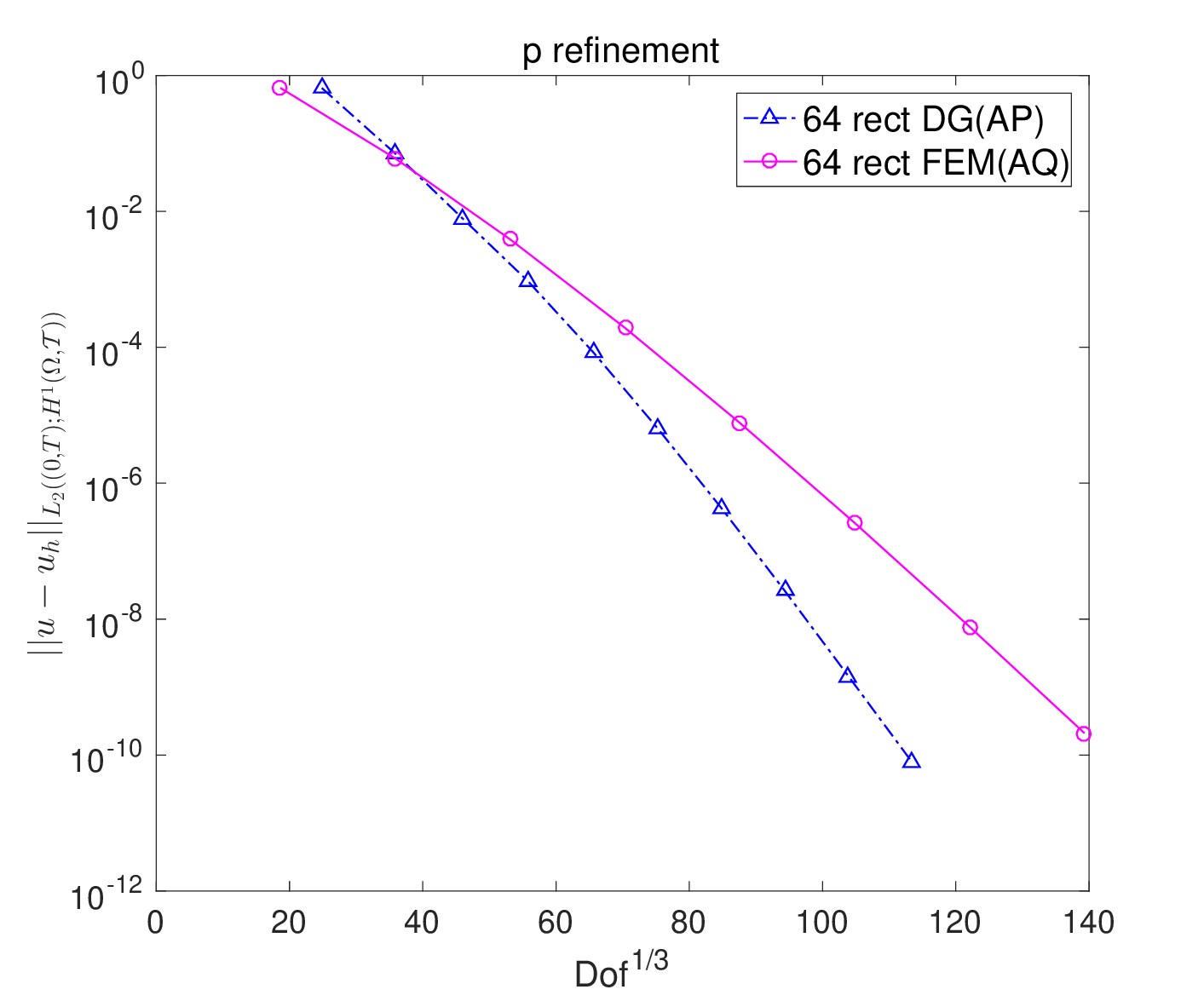} \\
\end{tabular}
\end{center}
\caption{Example 1. Convergence under $p$--refinement for $T=1$ with $80$ time steps for two different norms.For above  figures,  DG(AP) with  $p=1,\dots,10$ and  FEM(AQ) with $p=1,\dots,8$.} \label{Ex1_p_refinement_anisotropic}
\vspace{-0.3cm}
\end{figure}

Finally, we investigate the convergence performance of the proposed dG scheme with anisotropic space-time $\mathcal{P}_p$ basis against dG-time stepping spatially conforming FEM with anisotropic space-time $\mathcal{Q}_p$ basis  on hexahedral space-time meshes. Numerical results under $p$-refinement are given in Figure~\ref{Ex1_p_refinement_anisotropic}. Here, DG(AP) uses a reduced space-time $\mathcal{P}_p$ basis where the basis function with order $p$ in the temporal variable is removed, and FEM(AQ) uses the tensor-product space-time basis with order $p-1$ in the temporal variable and $\mathcal{Q}_p$ basis on spatial variables. We observe that  the convergence slope of DG(AP) is steeper than that of FEM(AQ). DG(AP)  achieves the same level of accuracy with fewer degrees of freedom for $p>2$.

\begin{figure}[t!]
\begin{center}
\begin{tabular}{cc}
\vspace{-0.15cm}
\hspace{-0.5cm}
\includegraphics[scale=0.28]{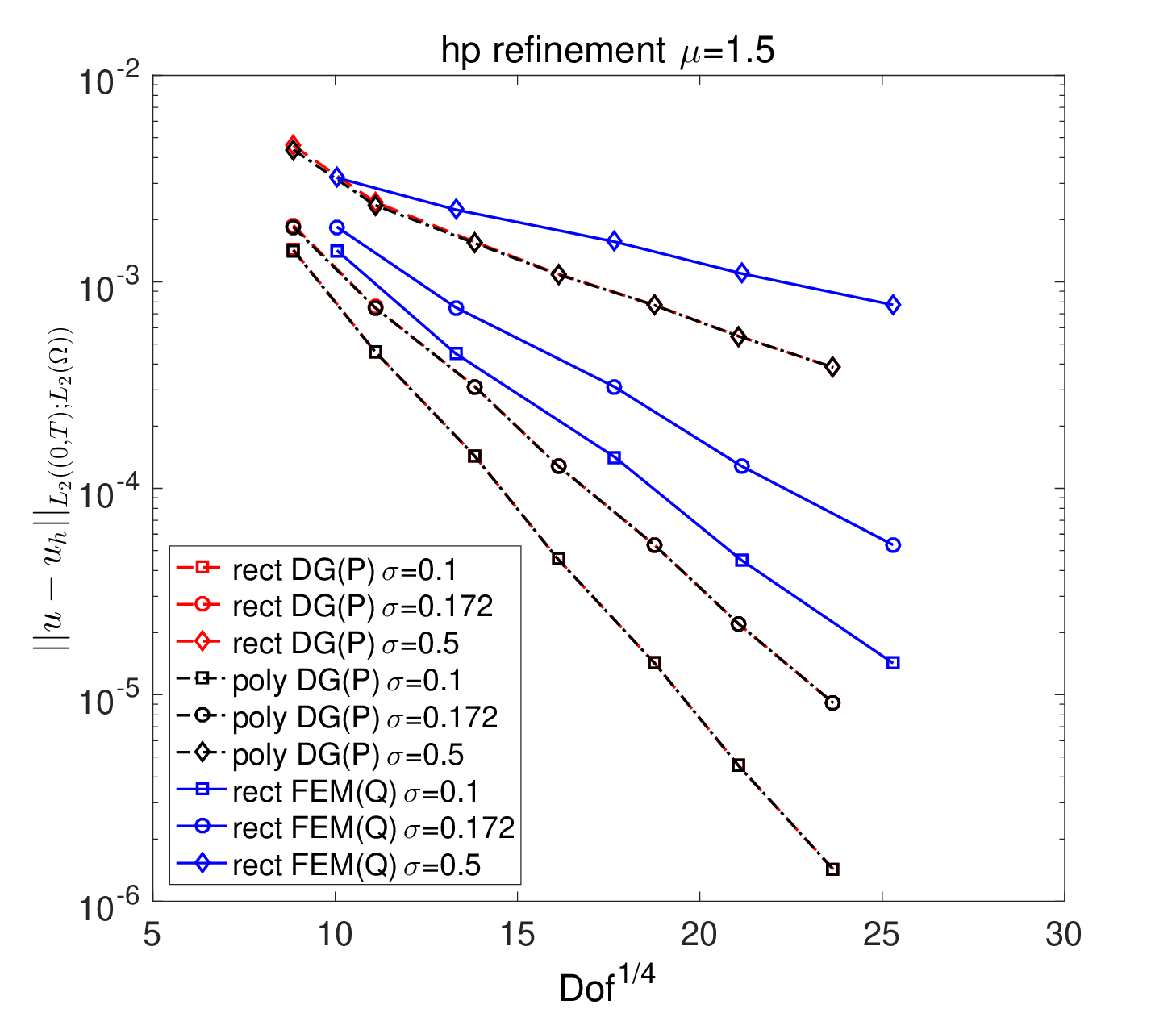}&
\hspace{-0.8cm}
\includegraphics[scale=0.28]{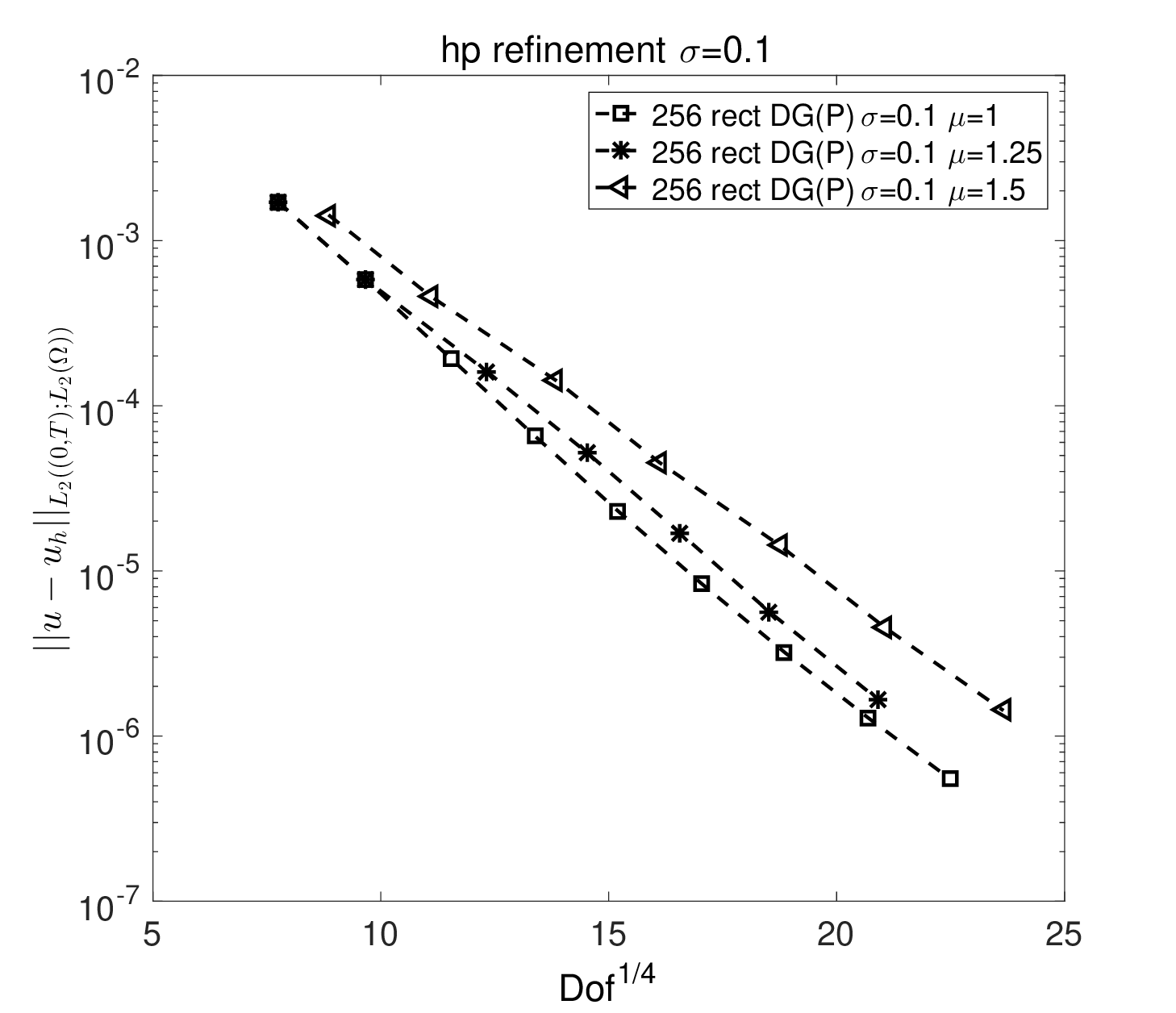} \\
\vspace{-0.15cm}
\hspace{-0.5cm}
\includegraphics[scale=0.28]{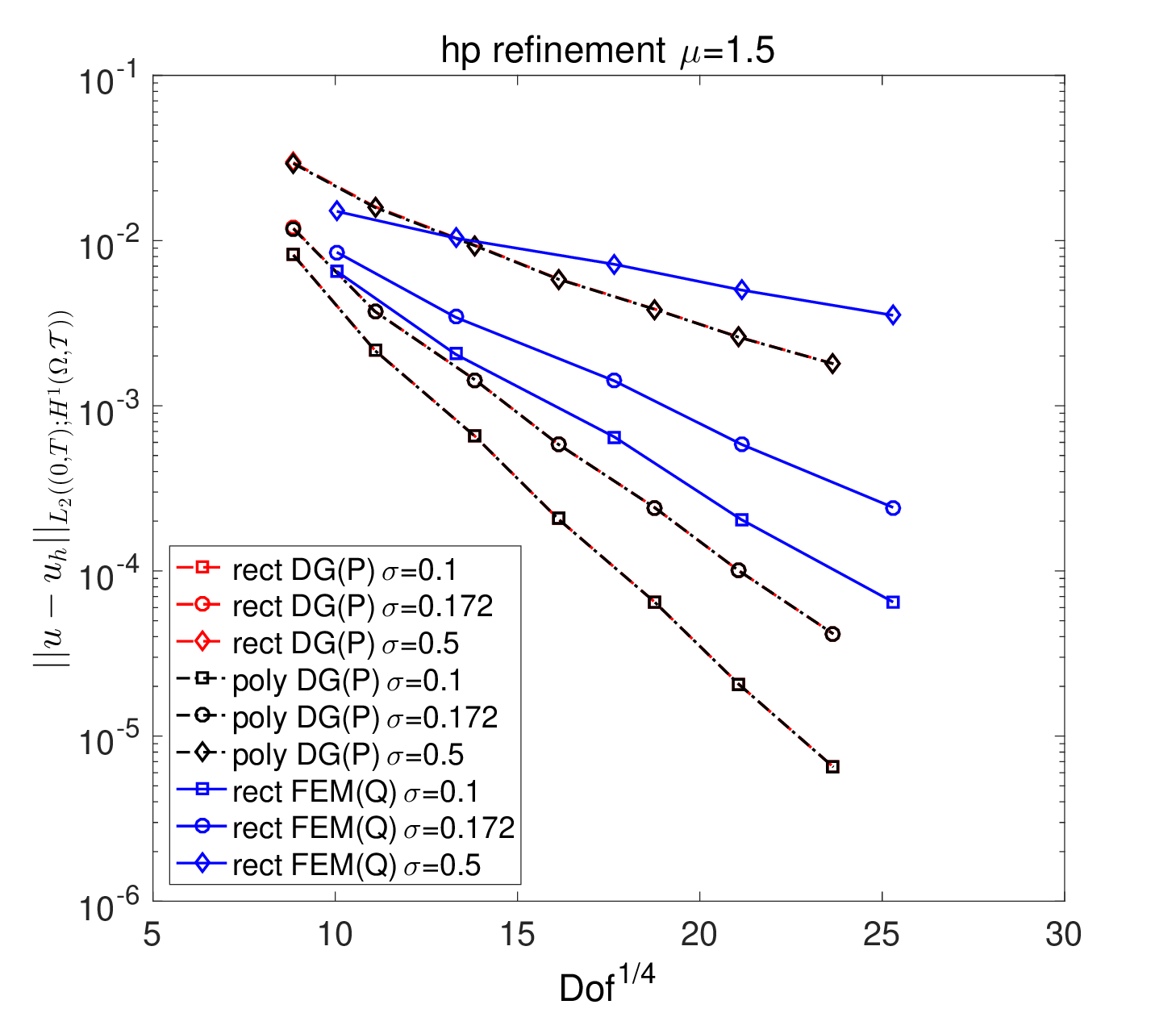}&
\hspace{-0.8cm}
\includegraphics[scale=0.28]{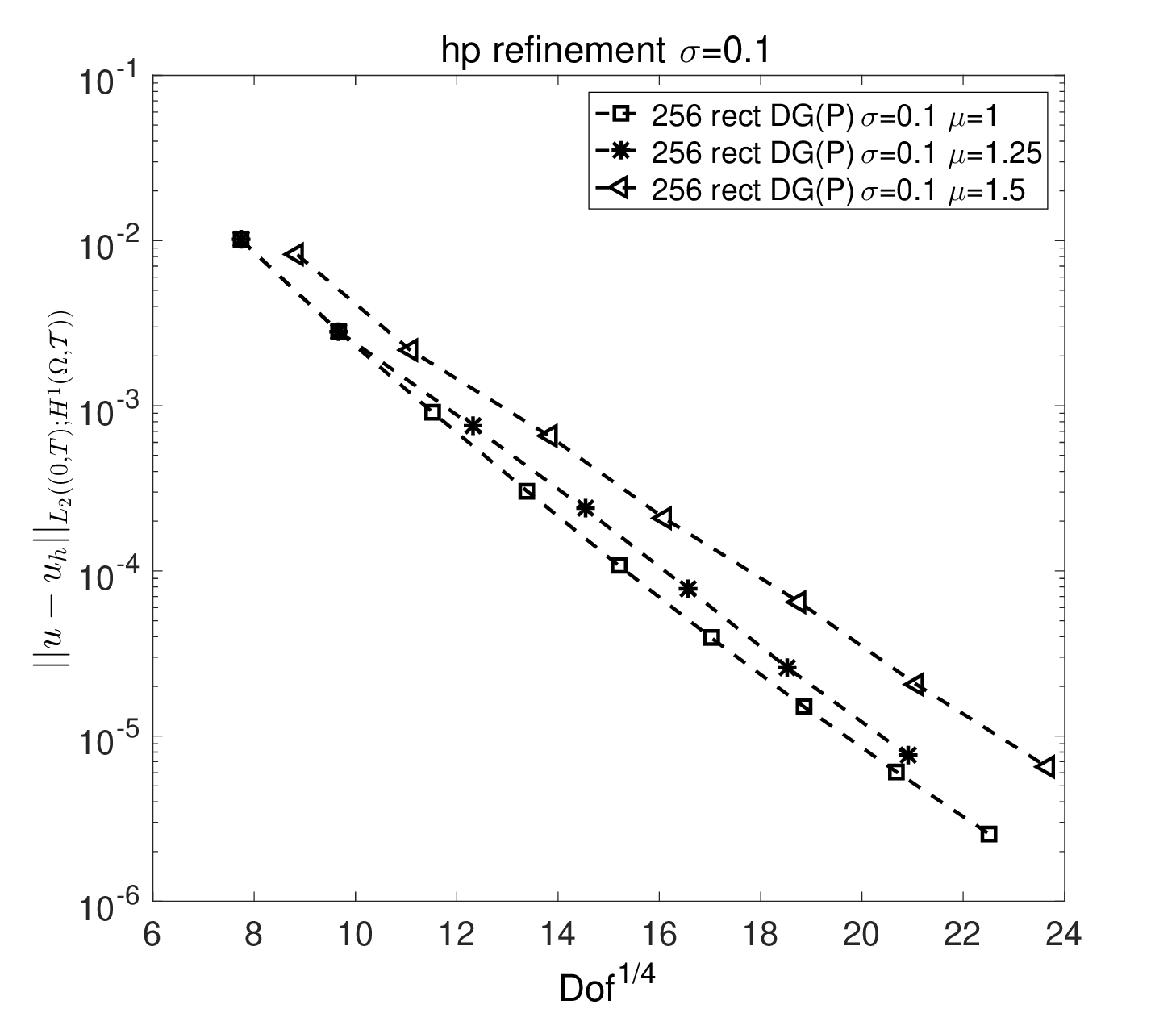} \\
\vspace{-0.15cm}
\hspace{-0.5cm}
\includegraphics[scale=0.28]{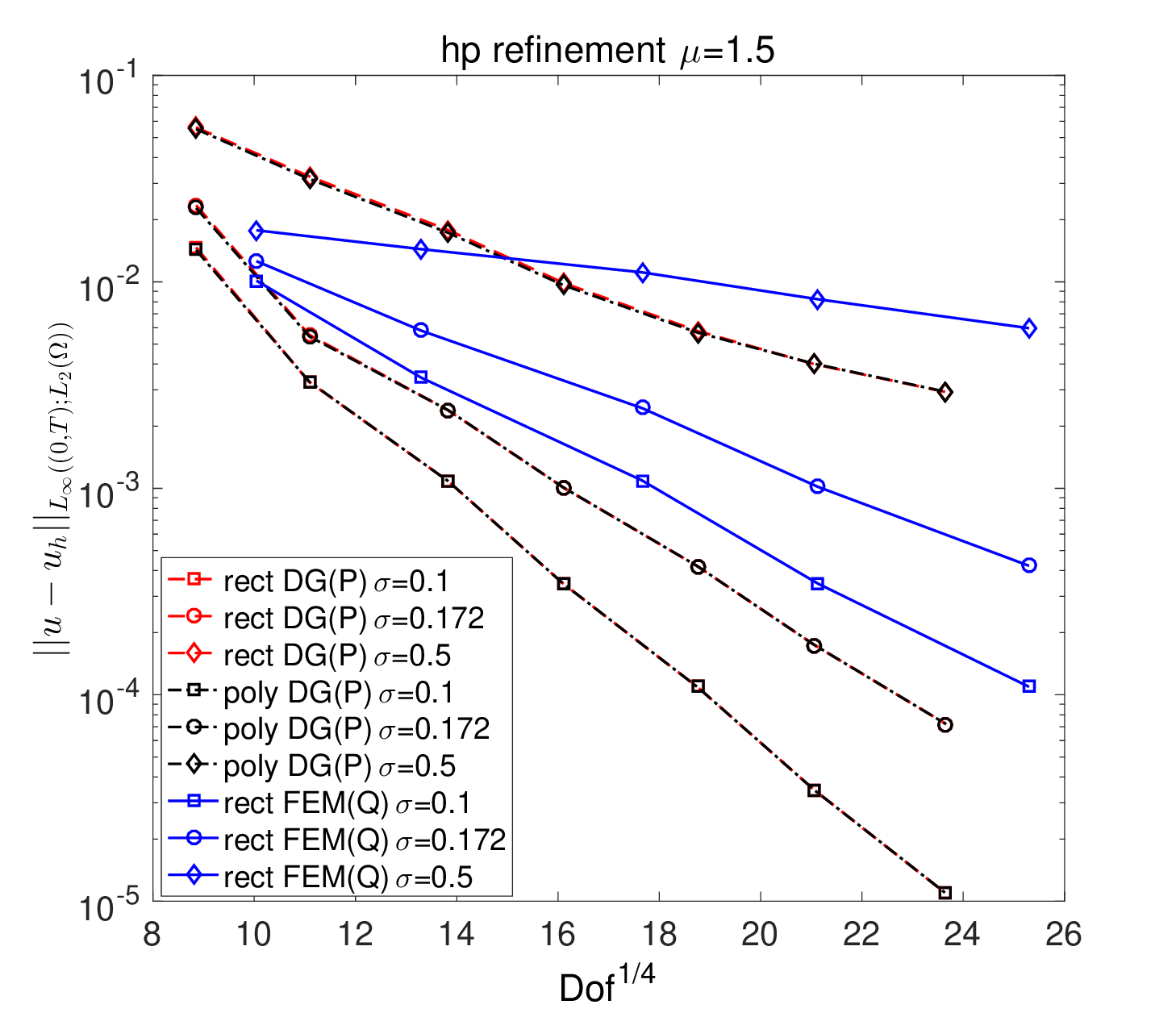}&
\hspace{-0.8cm}
\includegraphics[scale=0.28]{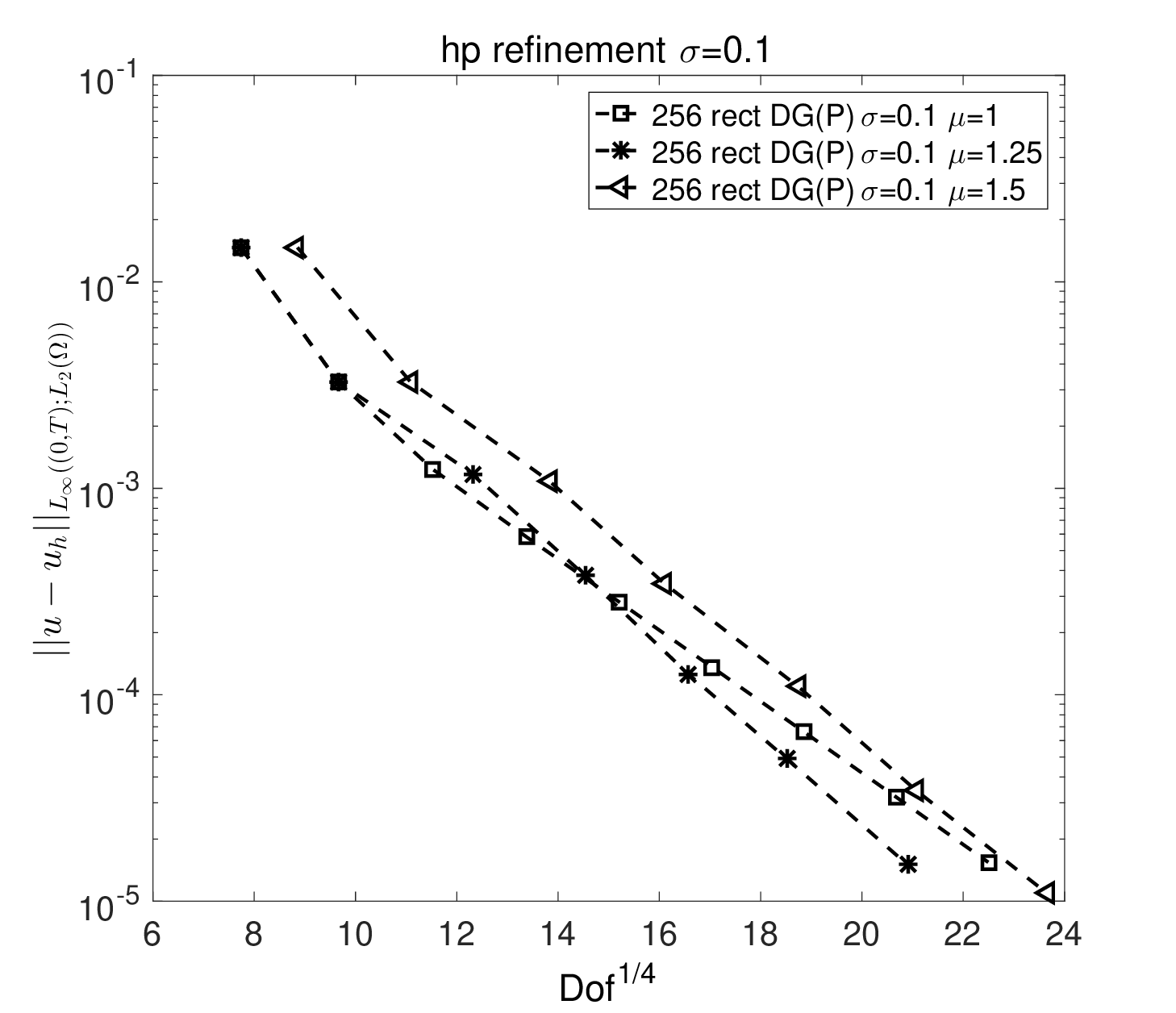} \\
\end{tabular}
\end{center}
\caption{Example 2: Convergence under $hp$--refinement with fixed $\mu=1.5$ (left); with fixed $\sigma=0.1$ (right) for three different norms.}\label{Ex2_hp_refinement}
\vspace{-.5cm}
\end{figure}

\subsection{Example 2}
We shall now assess the performance of the $hp$-version of the proposed method for a problem with initial layer. For $\bold{a}(x,y,t)$ being the identity matrix, $u_0$ and $f$ are chosen so that the exact solution of \eqref{Problem} is given by
\begin{equation}\label{example2}
u(x,y,t)= t^{\alpha}\sin(\pi x)\sin(\pi y) \quad \text{in } J \times  \Omega ,
\end{equation}
for $J =(0,0.1)$ and $\Omega = (0,1)^2$. We set $\alpha=1/2$, so that $u\in H^{1-\epsilon}(J; L_2(\Omega))$, for all $\epsilon>0$. This problem is analytic over the spatial domain, but has low regularity at $t=0$. To achieve exponential rates of convergence, we use temporal meshes, geometrically graded towards $t=0$, in conjunction with temporally varying polynomial degree $p$, starting, from $p=1$ on the elements belonging to the initial time slab, and linearly increasing $p$ when moving away from $t=0$; see \cite{schwab,schotzau2013hp,schotzau2000time} for details. Following \cite{schotzau2000time}, we consider a short time interval with $T = 0.1$. Let $0<\sigma<1$ be the mesh grading factor which defines a class of temporal meshes $t_n=\sigma^{N-n}\times 0.1$, for $n=1, \dots, N$. Let also $\mu$ be the polynomial order increasing factor determining the  polynomial order over different time steps by $p_{\k_n} := \ujump{\mu n}$ for for $n=1, \dots, N$.

The three left plots in Figure \ref{Ex2_hp_refinement} show the convergence history for DG(P) and FEM(Q) for this problem. All computations are performed over $256$ spatial elements with geometrically graded temporal meshes based on $3$ different grading factors $\sigma=0.1, 0.172, 0.5$ and fixed $\mu=1.5$. The error for both DG(P) and FEM(Q) appears to decay exponentially under the $hp$ refinement strategy described above for all three grading factors considered. The choice of $\sigma= 0.5$, is motivated by the meshes constructed in standard adaptive algorithms; $\sigma = 0.172$, is classical in that it was shown that it is the optimal grading factor for one-dimensional functions with $r^\alpha$-type singularity \cite{gui1986h}, while $\sigma = 0.1$ appears to be a better choice in the current context.  We also note that the convergence rate of DG(P) appears to be steeper than FEM(Q) under the same mesh and polynomial distribution. Furthermore, performing the same experiments on general polygonal spatial meshes, we observe that the error decay does not appear to depend on the shape of the spatial elements. This is expected, as the error in the time variable dominates in this example.

For completeness, we also report on how the choice of the  polynomial order increasing factor $\mu$ influences the exponential error decay for DG(P) with  fixed mesh grading factor $\sigma=0.1$; these are given in the three right plots in Figure \ref{Ex2_hp_refinement}. For both $L_2(J;L_2(\Omega))$-- and $L_2(J;H^1(\Omega,\mathcal{T}))$--norms, the results show that $\mu=1$ gives the fastest convergence, while $\mu=1.25$ gives the fastest error decay in the $L_\infty(J;L_2(\Omega))$--norm.

\subsection{Example 3}

Finally, we consider an example with a rough boundary to highlight the flexibility in domain approximation offered by the use of polytopic meshes within the context of the proposed dG scheme. Let $\bold{a}(x,y,t)$ be the identity matrix and let $f\equiv 1$. The domain $\Omega$, illustrated in Figure \ref{different meshes}, is constructed by removing small triangular regions attached to the boundary of a square domain. We set $u=0$ on $\partial \Omega$, $u|_{t=0} = 0$ and $J=(0,1)$.  The problem's solution is not known. As reference solution, we shall use the DG(P) solution on a fine uniform mesh made of $15624$ triangles and $256$ time steps with $p=3$.

We apply the proposed DG(P) scheme on two different meshes built as follows.
In both cases, we start from the reference uniform triangular mesh, fine enough to resolve the small scale structures on the boundary. Each mesh is iteratively coarsened, keeping into consideration that the micro-structures of the boundary need more resolution than the interior of the domain.  The first mesh is conforming, it consists of $616$ triangles and $900$ quadrilaterals; see Figure~\ref{different meshes} (left). The second mesh is polygonal and is made up of $225$ quadrilateral elements and $124$ polygonal elements; see  Figure~\ref{different meshes} (right). The second mesh achieves a similar resolution of the boundary with an overall coarser subdivision of the computational domain. Note that here the square elements neighbouring the finer polygonal elements in the second mesh are treated as polygons with more than four faces, some of which being co-linear, rather than traditional square elements with hanging nodes.  Numerical results obtained with these two meshes for $16$ and $32$ time steps are reported in Table 1. 
For the first mesh, linear basis functions are used, while for the second, polygonal, mesh quadratic basis functions are employed. We observe that the polygonal mesh is more accurate using smaller number of degrees of freedom. We note that the error is dominated by the spatial error in the vicinity of the boundary as can be seen by comparing the results obtained with $16$ and $32$ time steps, respectively. These results suggest that the use of general meshes and appropriate polynomial spaces may have a potential in achieving the same level of accuracy with fewer degrees of freedom, as they permit a more aggressive grading towards complicated features of the computational domain.

\begin{figure}[t]
\begin{center}
\begin{tabular}{cc}
\vspace{-0.5cm}\hspace{-1cm}
\includegraphics[scale=0.39]{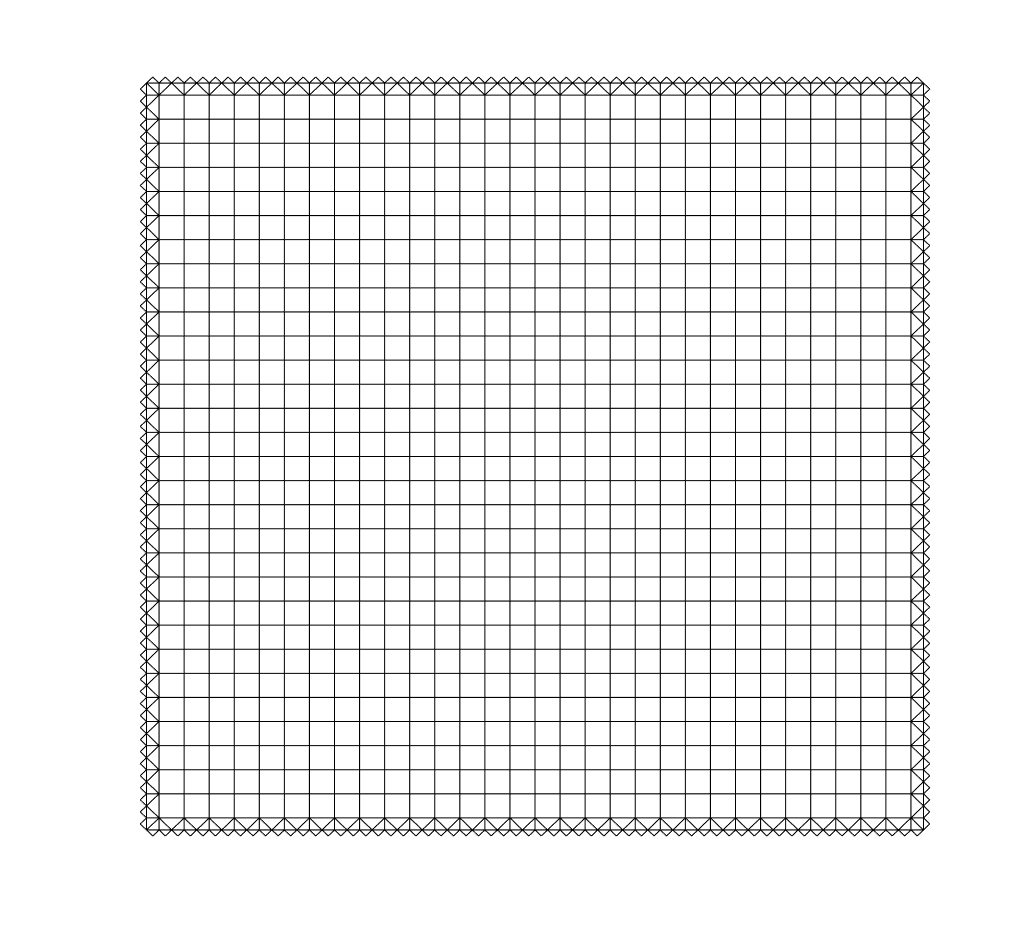}&
\includegraphics[scale=0.39]{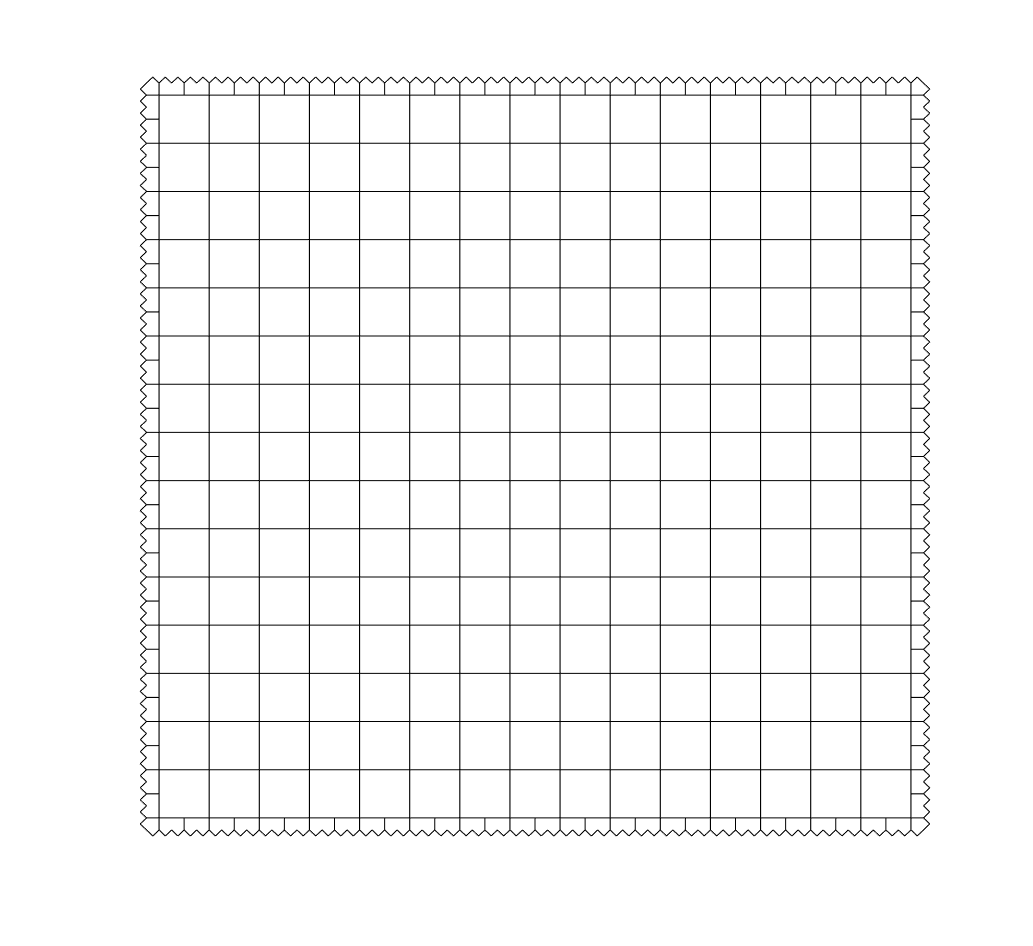}
\end{tabular}
\end{center}
\caption{Example 3. Hybrid triangular and rectangular mesh with $1516$ elements (left). Polygonal mesh with $349$ elements (right).} \label{different meshes}
\vspace{-0.4cm}
\end{figure}

 \begin{table}[!htb]
\begin{center}
\begin{tabular}{|c|c|c|c|c|}
 \hline
Mesh and basis & \multicolumn{2}{|c|}{$349$ elements P$2$ basis}   &\multicolumn{2}{|c|}{$1516$ elements P$1$ basis} \\
   \hline
Time steps  & {$n=16$}   &  {$n=32$  } & {$n=16$}   &  {$n=32$  }\\
 \hline
Total degrees of freedom &	55840	&	111680	&	97024 &	194048	\\
 \hline
$||u-u_h||_{L_2(J;L_2(\Omega))}$	&	2.2445e-04	&	1.8833e-04	&	1.0661e-03	&	7.4505e-04	\\
  \hline
$||u-u_h||_{L_2(J;H^1(\Omega,\mathcal{T}))}$	&	9.8263e-03	&	9.7722e-03	&	1.3131e-02	&	1.0131e-02	\\
 \hline
\end{tabular}
 \end{center}
\caption{Example 3. Numerical results corresponding to the meshes of Figure~\ref{different meshes}.}
 \end{table}

\section*{Acknowledgements}
The authors wish to express their gratitude to Oliver Sutton (University of Leicester) for his help in generating agglomerated meshes. AC acknowledges partial support from the EPSRC (Grant EP/L022745/1). EHG acknowledges support from The Leverhulme Trust (Grant RPG-2015-306).


\bibliographystyle{siam}
\bibliography{space_time_DG_final}

\end{document}